\numberwithin{equation}{section}
\newcommand{\Vol}{{\rm Vol}}
\newcommand{\Alex}{\text{Alex\,}}
\newcommand{\Alexnk}{\text{Alex}^n(\kappa)}
\newcommand{\dN}{\mathds{N}}
\newcommand{\dR}{\mathds{R}}
\newcommand{\dS}{\mathds{S}}
\newcommand{\dZ}{\mathds{Z}}
\newcommand{\cA}{\mathcal{A}}
\newcommand{\cC}{\mathcal{C}}
\newcommand{\cD}{\mathcal{D}}
\newcommand{\cH}{\mathcal{H}}
\newcommand{\cL}{\mathcal{L}}
\newcommand{\cR}{\mathcal{R}}
\newcommand{\cS}{\mathcal{S}}
\newcommand{\cU}{\mathcal{U}}
\newcommand{\ang}[3]{\measuredangle\left({#1}\,_{#3}^{#2}\right)}
\newcommand{\cang}[4]{\tilde\measuredangle_{#1}\left({#2}\,_{#4}^{#3}\right)}
\newcommand{\geod}[1]{\left[\,#1\,\right]}
\newcommand{\geodii}[1]{\,]\,#1\,[\,}
\newcommand{\geodci}[1]{[\,#1\,[\,}
\newcommand{\geodic}[1]{\,]\,#1\,]}
\newcommand{\dsp}{\displaystyle}
\newtheorem{theorem}{Theorem}[section]
\newtheorem{proposition}[theorem]{Proposition}
\newtheorem{lemma}[theorem]{Lemma}
\newtheorem{corollary}[theorem]{Corollary}
\theoremstyle{definition}
\newtheorem{definition}[theorem]{Definition}
\newtheorem{example}[theorem]{Example}
\theoremstyle{remark}
\newtheorem{remark}{Remark}[section]
\theoremstyle{remark}
\theoremstyle{remark}\newtheorem{conjecture}{Conjecture}[section]
\theoremstyle{remark}
\theoremstyle{remark}
\def\smalloverbrace#1{\mathop{\vbox{\m@th\ialign{##\crcr\noalign{\kern3\p@}%
  \tiny\downbracefill\crcr\noalign{\kern3\p@\nointerlineskip}%
  $\hfil\displaystyle{#1}\hfil$\crcr}}}\limits}
\begin{document}
\title{Gluing of multiple Alexandrov spaces}

\author[J.~Ge]{Jian Ge*}
\address[Ge]{BICMR, Peking University, Beijing, China}
\email{jge@bicmr.pku.edu.cn}
\thanks{*J. Ge is partially supported by the NSFC grant\#}

\author[N.~Li]{Nan Li**}
\address{N. Li, Department of Mathematics, The City University of New York - NYC College of
Technology, 300 Jay St., Brooklyn, NY 11201}
\email{NLi@citytech.cuny.edu}
\thanks{**Nan Li was partially supported by the PSC-CUNY Research Award \#61533-00 49.}

\date{\today}
\maketitle

\begin{abstract}
  In this paper we discuss the sufficient and necessary conditions for multiple Alexandrov spaces being glued to an Alexandrov space. We propose a Gluing Conjecture, which says that the finite gluing of Alexandrov spaces is an Alexandrov space, if and only if the gluing is by path isometry along the boundaries and the tangent cones are glued to Alexandrov spaces. This generalizes Petrunin's Gluing Theorem. Under the assumptions of the Gluing Conjecture, we classify the $2$-point gluing over $(n-1,\epsilon)$-regular points as local separable gluing and the gluing near un-glued $(n-1,\epsilon)$-regular points as local involutional gluing. We also prove that the Gluing Conjecture is true if the complement of $(n-1,\epsilon)$-regular points is discrete in the glued boundary. In particular, this implies the general Gluing Conjecture as well as a new Gluing Theorem in dimension 2. 
\end{abstract}

\tableofcontents

\section{Introduction}\label{s:intro}

Let $\Alexnk$ denote the collection of $n$-dimensional Alexandrov spaces whose curvature is bounded from below by $\kappa$ in the sense of Toponogov comparisons. By the definition we have that $n$-dimensional Riemannian manifold $M\in\Alexnk$ if the sectional curvature $\sec_M\ge\kappa$. If a group $G$ acts on $X\in\Alexnk$ isometrically with compact orbits, then the quotient space $X/G\in\Alexnk$. Alexandrov spaces can also be constructed by taking suspensions, cones and joins. Note that these constructions are all originated from manifolds. It was asked wether there are non-manifold designated methods to construct or confirm ``non-trivial" Alexandrov spaces. Related work can be found in \cite{LiNa19}, where for every $1\le k\le n-2$, Li and Naber construct Alexandrov spaces that are bi-Lipschitz to $n$-spheres and the $(k,\epsilon)$-singular sets are $k$-dimensional fat Cantor sets. These examples can be viewed as Alexandrov spaces with $C^1$-nontrivial metrics. Constructing new Alexandrov spaces may also be related to the open question wether every Alexandrov space is isometric to a Gromov-Hausdorff limit of Riemannian manifolds with uniform lower curvature bound, if collapsing is allowed. In this paper we study the gluing construction. The following is a classical result along this direction.

\begin{theorem}[\cite{Pet1997}]\label{t:pet.glu}
Let $X_i\in\Alex^n(\kappa)$ with non-empty boundary $\partial X_i$, $i=1,2$. Suppose that there is an isometry $\phi\colon X_1\to X_2$ with respect to their intrinsic metrics. Then the glued space $(X_1\amalg X_2)/ \{x\sim\phi(x)\}\in \Alex^n(\kappa)$.
\end{theorem}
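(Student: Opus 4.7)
The plan is to reduce Theorem~\ref{t:pet.glu} to the Doubling Theorem, which asserts that for any $X\in\Alexnk$ with $\partial X\ne\emptyset$, the double $\dbl(X):=(X\amalg X)/\{x\sim x:x\in\partial X\}$ belongs to $\Alexnk$. Once the doubling statement is established, the gluing theorem follows from an essentially local argument using the gradient flow of the distance from the glued boundary.

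\emph{Step 1 (Doubling Theorem).} First I would establish the Doubling Theorem. The key input is Perelman's fact that $f(x)=d(x,\partial X)$ is $1$-semiconcave on $X$, so its gradient flow $\Phi^t_f$ provides a canonical $1$-Lipschitz ``push-inward'' from the boundary; the associated reflection-type map fixes $\partial X$ pointwise and is $1$-Lipschitz from each half-copy of $X$ into the other. To verify the four-point comparison on $\dbl(X)$, take a quadruple $(p;a,b,c)$ and, after a density/continuity reduction, assume the comparison geodesics cross $\partial X$ only finitely often and transversely. Each crossing is ``unfolded'' into a single copy of $X$ via the reflection; the resulting configuration lies in $X$ alone, where the Alexandrov comparison holds and transfers back to $\dbl(X)$.

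\emph{Step 2 (Gluing).} For the glued space $Y:=(X_1\amalg X_2)/\{x\sim\phi(x)\}$, I would first verify that $Y$ is a proper geodesic space and that each inclusion $X_i\hookrightarrow Y$ is isometric on the interior: a minimizing path in $Y$ between interior points of $X_i$ cannot detour through $X_{3-i}$, since any such detour can be replaced by a shorter path along $\partial X_i$ by a first-variation convexity argument. For any four points in $Y$, use the gradient flow of the distance from the boundary in each $X_i$ — pieced together via the isometry $\phi$ on the common boundary — to reflect all points into a single side, say $X_1$. Since the reflection is $1$-Lipschitz and fixes the boundary pointwise (so angles based at boundary points are preserved), the four-point comparison in $Y$ reduces to the comparison in $X_1\in\Alexnk$.

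\emph{Main obstacle.} The principal difficulty lies in Step~1, namely proving that the gradient-flow reflection is $1$-Lipschitz and that four-point comparisons in $\dbl(X)$ genuinely follow from those in $X$ after unfolding. Geodesics in $\dbl(X)$ may touch or travel along $\partial X$ non-transversely, and the ``reflection'' of a geodesic segment is not literally a geodesic; this is precisely the reason Petrunin's approach replaces geodesics by \emph{quasigeodesics} and works with gradient curves of semiconcave functions as substitute variational objects. A secondary issue is that at points of $\partial X_1\sim\partial X_2$ the tangent cone $T_pY$ is itself a gluing of two Alexandrov cones along a common boundary, so the argument proceeds by simultaneous induction on dimension, with the case $n=1$ as the trivial base.
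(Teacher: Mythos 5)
The paper does not prove this theorem; it is stated as a citation to \cite{Pet1997} and used as a black box throughout (for instance in Corollary~\ref{c:sep.glue.Alex} and in the proof of Theorem~\ref{t:Self.Glue}). There is therefore no in-paper argument to compare against.

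Judged on its own terms, your sketch correctly locates the proof in the circle of ideas around quasigeodesics and gradient curves of semiconcave functions, and correctly names the central difficulty (geodesics in the glued space can run along the glued boundary and do not unfold to geodesics). However, two steps would fail as written. First, the reduction of the gluing theorem to the doubling theorem runs in the wrong direction: doubling is the special case $X_1=X_2$, $\phi=\mathrm{id}$, and no reverse reduction is available. Your Step~2 asks for a $1$-Lipschitz ``reflection'' from $X_2$ into $X_1$ fixing the glued boundary pointwise; such a map exists for the double (it is the swap, and indeed one can directly check that the $1$-Lipschitz folding map $\dbl(X)\to X$ forces $X$ to sit convexly in $\dbl(X)$), but for two genuinely different spaces $X_i$ no such reflection exists, and the gradient flows of $d(\cdot,\partial X_i)$ on the two sides do not splice into one. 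Second, the preliminary claim that each $X_i\hookrightarrow Y$ is isometric on interiors is justified by a ``first-variation convexity argument,'' which is circular: any angle or first-variation comparison in $Y$ presupposes exactly the curvature bound you are trying to establish. In the paper's own framework that isometric embedding is extracted from the Lipschitz--Volume Rigidity Theorem~\ref{t:NLi15b-1-lip}, which already assumes $Y\in\Alex^n(\kappa)$. The honest route --- which you flag as the ``main obstacle'' but do not execute --- is Petrunin's: construct quasigeodesics in $Y$ by gluing quasigeodesics along $\partial X$, show they remain quasigeodesics, and verify the comparison for this larger class of curves. That is not a technical wrinkle to be smoothed over later; it is the entire content of the proof.
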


This theorem generalizes Perelman's doubling theorem \cite{Per1991}, which assumes $X_1=X_2$. Note that Petrunin's Theorem requires that every point is glued with exactly one distinct point and the entire boundary has to be glued with another boundary. This type of gluing is called the {\it superable gluing}. By the Globalization Theorem \cite{BGP}, Petrunin's Theorem still holds if the gluing is locally superable. In this paper we consider more general gluing structures, which includes self-gluing and partial gluing of multiple points along multiple Alexandrov spaces, as well as possible mixed types of these gluing in any small domain. The following example illustrates some possible types of gluing that we will deal with.



\begin{figure}[hbt!]
\begin{tikzpicture}
\draw [black, fill=RoyalBlue!80] (-5.1,0)node[left, yshift=0.1cm]{$B_{1}$}  -- (-3.37,1)node[left,yshift=0.2cm]{$O_{1}$} -- (-3.37,3)node[above, xshift=-0.2cm]{$A_{1}$} arc (90:210:2);
\draw [black, fill=RoyalBlue!20] (-1.44,0)node[right]{$A_{2}$} -- (-3.17,1)node[right, yshift=0.2cm]{$O_{2}$} -- (-3.17,3)node[above, xshift=0.2cm]{$B_{2}$}arc(90:-30:2);
\draw [black, fill=RoyalBlue!50] (-5, -0.2)node[left,yshift=-0.2cm]{$A_{3}$}  -- (-3.27, 0.8)node[below,yshift=-0.1cm]{$O_{3}$} -- (-1.54,-0.2cm)node[right,yshift=-0.2cm]{$B_{3}$} arc(-30:-150:2);
\node at (-4,1.5)[left]{$X_{1}$};
\node at (-2.4,1.5)[right]{$X_{2}$};
\node at (-3.5, -.3)[right]{$X_{3}$};
\draw [black, fill=RoyalBlue!80] (0,0) -- (1.73,1) -- (1.73,3) arc (90:210:2);
\draw [black, fill=RoyalBlue!20] (3.46,0) -- (1.73,1)-- (1.73,3)arc(90:-30:2);
\draw [black, fill=RoyalBlue!50] (0,0) -- (1.73,1)-- (3.46,0)arc(-30:-150:2);
\node at (-3, -2) {Three sectors $X_1$, $X_2$ and $X_3$};

\node at (1.73,3)[above]{$C_{1}$};
\node at (0,0)[left]{$C_{3}$};
\node at (3.46,0)[right]{$C_{2}$};
\node at (1.2,1.5)[left]{$X_{1}$};
\node at (2.3,1.5)[right]{$X_{2}$};
\node at (1.4, -.3)[right]{$X_{3}$};
\node at (1.7, 0.5) {$O$};
\draw [dashed] (-0.2, 1.5)node[left]{$P$}--(3.7,0.5)node[right]{$Q$};
\node at (1.7, -2) {The unit disk $Y$};
\draw [black, fill=RoyalBlue, fill opacity=1](4.79, 1.484)node[left, black]{$P$}--(6.73,1)--(6.73, 3)arc(90:166:2);
\draw [black, fill=RoyalBlue!20, fill opacity=1](8.67,0.516)node[right, black]{$Q$}--(6.73,1)--(6.73, 3)arc(90:-14:2);
\draw [black, fill=RoyalBlue!50, fill opacity=1](5.67, 2.7)--(6.73,1)--(8.728,1.07)arc(2:122:2);
\draw [black, fill=RoyalBlue!80, fill opacity=1](4.79, 1.484)--(6.73,1)--(5.67,2.7)arc(122:166:2);
\draw [black, fill=RoyalBlue!20, fill opacity=1](6.73,1)--(8.67,0.516)arc(-14:2:2)--(6.74,1);
\draw [dotted] (6.73,3)node[above]{$C_{1}$}--(6.73, 1);
\filldraw[black](6.73, 1) circle (1pt)node[below]{$O$};
\node at (7, -2) {The doubled half disk $Z$};
\node at (8.728,1.07)[right]{$C_{2}$};
\node at (5.67,2.7)[above,xshift=-2]{$C_{3}$};
\end{tikzpicture}
\caption{Mixed type of gluing}\label{fig:3sec}
\end{figure}
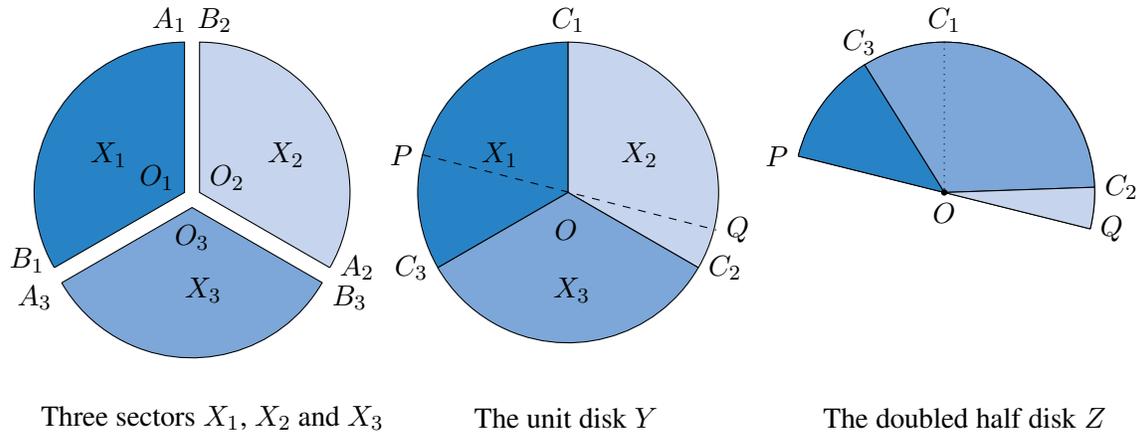


\begin{example}\label{ex:3ptgluing}
Let $X_1=X_2=X_3$ be three identical flat unit sectors with angles $2\pi/3$ at vertices $O_i\in X_i$. Let the unit disk $Y$ be glued from $X_1$, $X_2$ and $X_3$ via identifications $\overline{O_iA_i}\sim \overline{O_{i+1}B_{i+1}}$, where $O_4=O_1$ and $B_4=B_1$. Then glue $\partial Y$ along the reflection about a diameter $\overline{PQ}$. Let $Z\in\Alex^2(0)$ be the glued space, which is simply isometric to the doubled half disk. See Figure \ref{fig:3sec}.

The space $Z$ can be viewed as being glued from $\{X_1, X_2, X_3\}$ via the composition of the above two gluing procedures. In particular, the boundary $\overset\frown{A_1B_1}\subset\partial X_1$ is partially glued with itself and partially glued with $\partial X_3$. Points $O$ and $C_is$ are glued from three points in two or three different spaces. The gluing near $P$ and $Q$ behave like non-superable involutions. Petrunin's Theorem doesn't apply in any small neighborhood of these points. Moreover, Petrunin's Theorem doesn't apply to any pair-wise gluing near $O$ either. This is because any two pairs of $\{X_1, X_2, X_3\}$ glue to a concave space near the vertex.

Without any extra condition, the above mixed types of gluing may show up in any small domain. \hfill $\Box$

\end{example}

If one simply allows all kinds of gluing then there are easy counterexamples such as the gluing of two Alexandrov spaces only at one point from each space. We wish to first discuss some natural and necessary conditions for the glued space being an Alexandrov space. Let us begin with the general notion of gluing.

\subsection{Notion of gluing}

We let $\Alex_\amalg^n(\kappa)$ denote the collection of length metric space $X$, where $X=\amalg_{i=1}^N X_i$ is a disjoint union of finitely many Alexandrov spaces $(X_i, d_{i})\in\Alex^n(\kappa)$. The metric on $X$ is defined as
\begin{align*}
d_X(p,q) &= \renewcommand{\arraystretch}{1.5}
   \left\{\begin{array}{@{}l@{\quad}l@{}}
    d_i(p,q) & \text{ if $p,q\in X_i$ for some $i$; }
    \\
    \infty & \text{ otherwise.}
  \end{array}\right.
\end{align*}
We let $ X^\circ=\amalg_i X_i^\circ$ denote the disjoint union of the interiors of $X_i$ and $\partial X=\amalg_i\partial X_i$ denote the disjoint union of the boundaries of $X_i$. If $\kappa<0$ and $X_i$ is non-compact, we will further require that the volume of spaces of directions on points in $X_i\in\Alexnk$ have a uniform positive lower bound. This is equivalent to \begin{equation}
  \inf_{0<r<1,\, x\in X}\{r^{-n}\Vol(B_r(x))\}\ge c(X)>0,
  \label{intro.e1}
\end{equation}
which always holds if $X$ is compact or $X\in\Alex^n(0)$. This condition is used to prevent the gluing of infinitely many points which locally look more and more like cusps (see Lemma \ref{l:glue.spd} and Lemma \ref{l:G2-1.dense} for the reason of excluding this case). This kind of ``infinite" gluing will not be discussed in this paper.


\begin{definition}[Gluing]\label{d:gluing}
See $\S3$ in \cite{BBI2001} for more details. Let $\mathcal R$ be an equivalence relation on $(X,d)\in \Alex_\amalg^n(\kappa)$. The quotient pseudometric $d_{\mathcal{R}}$ on $X$ is defined as
$$d_{\mathcal{R}}(p,q)
  =\inf\left\{\sum_{i=1}^Nd(p_i,q_i):p_1=p, q_N=q, p_{i+1}\overset {\mathcal{R}}\sim q_i,
  N\in\dN
\right\}.$$
By identifying the points with zero $d_{\mathcal{R}}$-distance, we obtain a length metric space $(Y,d_Y)=(X/d_{\mathcal{R}}, \bar d_{\mathcal{R}})$, which is called the space glued from $\{X_\ell\}$ along the equivalence relation $d_{\mathcal{R}}$. Let $f\colon X\to Y$ be the projection map, which is always 1-Lipschitz onto. Point $x_1$ is said to be glued with $x_2$ if $f(x_1)=f(x_2)$. Note that even if $x_1$ is not equivalent to $x_2$ in the relation $\mathcal R$, they may still be glued together due to the above definition. In this sense, we can always assume that if $x_i\to x$, $y_i\to y$ and $x_i$ and $y_i$ are glued for every $i$, then $x$ is glued with $y$.

Note that every 1-Lipschitz onto map $f\colon X\to Y$ gives rise a gluing structure via equivalence relation $x_1\sim x_2$ if $f(x)=f(x_2)$. Thus we may identify the gluing structure with the projection map $f$, which is also called the gluing map.
\end{definition}

\subsection{Gluing by path isometry}
Let $G_X=\{x\in X\colon |f^{-1}(f(x))|>1\}$ be the set of gluing points. In this paper, we only consider the gluing without losing volume. That is, the Hausdorff measure $\cH^n(G_X)=0$. Now the projection $f\colon X\to Y$ is a 1-Lipschitz onto and volume preserving map. By \cite{NLi15b}, we have the following Lipschitz-Volume Rigidity Theorem.
\begin{theorem}[LV Rigidity, \cite{NLi15b}]\label{t:NLi15b-1-lip}
  For any $X\in\Alex_\amalg^n(\kappa)$ and $Y\in\Alexnk$, if $f\colon X\to Y$ is a 1-Lipschitz onto and volume preserving map, then $f$ preserves the length of paths and $f\,|_{\, X^\circ}$ is an isometry with respect to the intrinsic metrics. Therefore, $Y$ is isometric to a space that is glued from $X$ along $\partial X$.
\end{theorem}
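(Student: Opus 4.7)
The plan is to prove the rigidity in three stages: first, show that at almost every point $f$ is infinitesimally an isometry; second, upgrade this infinitesimal information to length preservation along rectifiable paths; third, derive injectivity on $X^\circ$ and identify the gluing locus with $\partial X$.

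For the first stage, I would let $R_X \subset X$ and $R_Y \subset Y$ denote the sets of $n$-regular points (those with Euclidean tangent cone), both of full $\cH^n$-measure in their respective spaces. Since $f$ is volume-preserving and $1$-Lipschitz, $R := R_X \cap f^{-1}(R_Y)$ also has full $\cH^n$-measure in $X$. At any $p \in R$, rescaling by $\lambda_k \to \infty$ produces pointed Gromov--Hausdorff convergence $(X, \lambda_k d_X, p) \to (\RR^n, 0)$ and $(Y, \lambda_k d_Y, f(p)) \to (\RR^n, 0)$. The rescaled $1$-Lipschitz maps $f_k$ subconverge, by an Arzel\`a--Ascoli argument, to a $1$-Lipschitz surjection $df_p \colon \RR^n \to \RR^n$. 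Because rescaled volumes pass to the limit and each $f_k$ preserves $\cH^n$, the limit preserves Lebesgue measure; a $1$-Lipschitz, volume-preserving endomorphism of $\RR^n$ is necessarily an affine isometry, giving the desired infinitesimal rigidity at $p$.

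For the second stage, let $\gamma\colon [0,1]\to X^\circ$ be rectifiable. The inequality $L(f\circ\gamma)\le L(\gamma)$ is immediate from the $1$-Lipschitz condition. For the reverse, I would use a slicing/coarea argument: near an interior point use a $\delta$-strainer to set up bi-Lipschitz local coordinates, parametrize a small tube of curves around $\gamma$ by a transverse $(n-1)$-disk, and apply a Fubini-type inequality to conclude that for almost every nearby curve $\gamma'$, the intersection $\gamma'\cap R$ has full $1$-dimensional measure. Along such $\gamma'$, the infinitesimal isometry from Stage 1 combined with the standard differentiation theorem for Lipschitz maps between Alexandrov spaces yields $L(f\circ\gamma') = L(\gamma')$. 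Passing to the limit over $\gamma'\to\gamma$ and using semicontinuity of length gives $L(f\circ\gamma) = L(\gamma)$.

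For the third stage, length preservation implies that $f$ between the intrinsic metrics on $X^\circ$ and $f(X^\circ)$ is $1$-Lipschitz and length-preserving, hence an intrinsic isometry onto its image once injectivity is established. Injectivity follows from volume: if $f(p)=f(q)$ with $p\ne q$ both in $X^\circ$, then sufficiently small disjoint balls $B_r(p)$ and $B_r(q)$ would have images contained in a single ball $B_r(f(p))$, whose total $\cH^n$-measure is at most that of one ball, contradicting $f_*\cH^n = \cH^n$. Thus $f|_{X^\circ}$ is an intrinsic isometry, and every non-trivial identification $f(x)=f(x')$ must have both $x, x' \in \partial X$, so $Y$ is isometric to the space obtained by gluing $X$ along $\partial X$.

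The main obstacle is the slicing step in Stage 2: the regular set $R$ is large in measure but potentially very irregular in shape, and a given rectifiable curve $\gamma$ may lie entirely in the singular complement. Constructing a transverse family of comparison curves on an Alexandrov space and justifying a Fubini/coarea inequality on this family without a smooth product structure is the delicate technical point; the approach would lean on the Burago--Gromov--Perelman $\delta$-strainer machinery to produce bi-Lipschitz coordinate charts near regular points, and on metric differentiation (in the style of Kirchheim/Cheeger) to promote the pointwise infinitesimal isometries of Stage 1 to length preservation on generic curves.
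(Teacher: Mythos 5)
The paper does not prove this theorem; it cites it from \cite{NLi15b}, so there is no internal proof to compare against, but the proposal can be checked on its own merits. Stage~1 is sound. The problem is Stage~2, and it is a genuine gap, not just a technical obstacle.

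You want to upgrade $L(f\circ\gamma')=L(\gamma')$ for a family of ``good'' curves $\gamma'$ to $L(f\circ\gamma)=L(\gamma)$ for an arbitrary rectifiable $\gamma$. You invoke lower semicontinuity of length under the convergence $\gamma'\to\gamma$. But lower semicontinuity only gives
\[
L(f\circ\gamma)\ \le\ \liminf_{\gamma'\to\gamma}\ L(f\circ\gamma')\ =\ \liminf_{\gamma'\to\gamma}\ L(\gamma'),
\]
and with $L(\gamma')\to L(\gamma)$ this reproduces $L(f\circ\gamma)\le L(\gamma)$, which is just the $1$-Lipschitz inequality you started with. Semicontinuity cannot produce the reverse inequality $L(f\circ\gamma)\ge L(\gamma)$; it points the wrong way. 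More fundamentally, a.e.\ length preservation along a fattened transverse family of curves does not bound from below the length of the image of any \emph{particular} curve, because distances in $Y$ are controlled by the \emph{shortest} competitor, not by the images of your preferred lifts. To get distance preservation on $X^\circ$ you would need to lift geodesics of $Y$ back to $X$ with no increase in length, but $f$ is not yet known to be injective, so there is no canonical lift. The argument is circular.

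Stage~3 also has a soft spot. The ball-doubling contradiction needs
\[
\frac{\Vol(\Sigma_p)+\Vol(\Sigma_q)}{\Vol(\dS_1^{n-1})}>1,
\]
since $\cH^n(B_r(f(p)))/(\omega_n r^n)\to \Vol(\Sigma_{f(p)})/\Vol(\dS_1^{n-1})\le 1$ while $\cH^n(B_r(p))/(\omega_n r^n)\to\Vol(\Sigma_p)/\Vol(\dS_1^{n-1})$. For singular interior points, $\Vol(\Sigma_p)$ can be arbitrarily small (e.g.\ the vertex of a thin cone), so this inequality can fail, and the volume argument alone does not rule out $f(p)=f(q)$ for two interior points. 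The cited reference handles both issues with a different strategy: it runs an induction on dimension at the level of spaces of directions. From the blow-up at $(p,q,y)$ one gets a $1$-Lipschitz volume-preserving map $\Sigma_p\amalg\Sigma_q\to\Sigma_y$; by the inductive hypothesis this must glue along boundaries, and since $p,q\in X^\circ$ forces $\partial\Sigma_p=\partial\Sigma_q=\varnothing$, no gluing is possible --- contradiction. This kills injectivity on $X^\circ$ in one blow, and distance/length preservation then follows without the problematic slicing step. Your measure-theoretic route as written cannot close either of these two steps.
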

This shows that in order to obtain $Y\in\Alexnk$ by gluing $X\in\Alex_\amalg^n(\kappa)$ without losing volume, the gluing must be along the boundaries ($G_X\subseteq\partial X$) and preserves the lengths of paths. Note that if a curve $\gamma_1$ is glued with a curve $\gamma_2$, then their length $\cL(\gamma_1)=\cL(f(\gamma_1))=\cL(f(\gamma_2))=\cL(\gamma_2)$. If the gluing map $f$ satisfies such a length preserving property, we say that $X$ is \emph{glued by path isometry}. Note that our gluing is defined by a general equivalence relation. A prior, we don't assume that the gluing is continuous in any sense. In particular, we don't assume that if $\gamma\colon(-1,1)\to X$ is a continuous curve with $\gamma\subseteq G_X$, then there is a continuous curve $\sigma\colon(-\epsilon,\epsilon)\to X$ so that $\gamma(t)\neq\sigma(t)$ and $\gamma(t)$ is glued with $\sigma(t)$ for every $-\epsilon< t< \epsilon$. Based on Example \ref{ex:3ptgluing}, the gluing may not be locally separable as required Petrunin's Theorem \ref{t:pet.glu}.

\subsection{Gluing of tangent cones}
Let $\dsp T_{f^{-1}(y)}(X)=\underset{f(x)=y}\amalg T_x(X)=\lim_{r\to0^+}(X,f^{-1}(y),r^{-1}d_X)$ be the disjoint union of tangent cones. For each $r>0$, we have a gluing map $f_{r}\colon (X,f^{-1}(y),r^{-1}d_X)\to (Y,y,r^{-1}d_Y)$, which is induced from $f$, defined on the rescaled spaces. Because $\{(X,x,r^{-1}d_X)\}$ is pre-compact  in pointed Gromov-Hausdorff Topology and $f_{r}$ is distance non-increasing, by Gromov's Compactness Theorem, $\{(Y,y,r^{-1}d_Y)\}$ is also pre-compact. By Arzel\`a-Ascoli Theorem, passing to a subsequence $r_i\to 0^+$, we have a limit gluing map
$$\dsp f_\infty\colon T_{f^{-1}(y)}(X)\to T_y(Y):=\lim_{r_i\to 0^+}(Y, y, r_i^{-1}d_Y),$$
which is also $1$-Lipschitz onto and volume preserving.  In summary, this says that tangent cone $T_y(Y)$ exists when passing to subsequences, and is isometric to a space glued from $T_{f^{-1}(y)}(X)$ along their boundaries and by path isometry. The gluing map $f_\infty\colon T_{f^{-1}(y)}(X)\to T_y(Y)$ is a rescaling limit of the original gluing map $f\colon X\to Y$, depending on the choice of subsequences. Thus the tangent cone $T_y(Y)$ may not be unique. For arbitrary gluing, tangent cone $T_p(Y)$ may not be a metric cone or Alexandrov space. For example, glue two squares $A_1B_1C_1D_1$ and $A_2B_2C_2D_2$ via equivalence relation $a_1\sim a_2$, if $a_1\in\overline{A_1B_1}$, $a_2\in\overline{A_2B_2}$ and $d(a_1,A_1)=d(a_2,A_2)=\frac1i$ for some $i\in\dZ$. However, this won't happen if the glued space $Y\in\Alex^n(\kappa)$, since $T_y(Y)\in\Alex^n(0)$ and is unique.

%

In view of this, it's natural to assume $T_y(Y)\in\Alex^n(\kappa')$ for some $\kappa'$ concerning the generalized Gluing Theorem. Since $T_y(Y)$ is glued from $\dsp T_{f^{-1}(y)}(X)$, such an assumption is equivalent to that $\dsp T_{f^{-1}(y)}(X)$ glues to an Alexandrov space.
The following lemma shows that $T_y(Y)$ is in fact a metric cone if it is known to be a glued Alexandrov space.


\begin{lemma}\label{l:glue.tcone}
  Let $Y$ be a connected length metric space glued from $X\in\Alex_\amalg^n(\kappa)$ by path isometry along the boundary and satisfying (\ref{intro.e1}). If $\dsp T_{f^{-1}(y)}(X)=\lim_{r_i\to0^+}(X,f^{-1}(y), r_i^{-1}d_X)$ glues to a connected Alexandrov space $Z\in\Alex^n(\kappa)$, in terms of the limit gluing map $f_\infty$, then $\dsp Z=T_y(Y)=\lim_{r_i\to0^+}(Y,y, r_i^{-1}d_Y)=C(\Sigma)$ is the metric cone over a connected space $\Sigma\in\Alex^{n-1}(1)$, which is glued from $\Sigma_{f^{-1}(y)}(X)=\underset{f(x)=y}\amalg \Sigma_x(X)$ accordingly.
\end{lemma}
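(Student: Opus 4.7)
The plan is to show $Z = C(\Sigma)$ by exhibiting a natural isometry between $Z$ and its tangent cone $T_o(Z)$ at a canonical apex $o \in Z$, combining a volume-density computation with another application of Theorem~\ref{t:NLi15b-1-lip}.

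First, I set up the apex. By Theorem~\ref{t:NLi15b-1-lip} applied to the limit gluing map $f_\infty\colon T_{f^{-1}(y)}(X) \to Z$, this map is $1$-Lipschitz, surjective, volume-preserving, and realizes $Z$ as a gluing of the disjoint union of metric cones $T_x(X) = C(\Sigma_x(X))$ by path isometry along boundaries. Condition~(\ref{intro.e1}) forces $f^{-1}(y)$ to be finite; the apexes $o_x \in T_x(X)$ all arise as rescaling limits of the single point $y \in Y$, so their $f_\infty$-images coincide at one point $o \in Z$, namely the basepoint of the pGH limit.

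Next, I compute the volume density of $Z$ at $o$. By volume preservation of $f$, $\Vol_Y(B_\rho(y)) = \Vol_X(B_\rho(f^{-1}(y)))$, and volume convergence under pGH convergence of Alexandrov spaces,
\begin{equation*}
  \Vol_Z(B_r(o)) = \lim_{i\to\infty} r_i^{-n}\Vol_X(B_{rr_i}(f^{-1}(y))) = \tfrac{r^n}{n}\Vol\bigl(\Sigma_{f^{-1}(y)}(X)\bigr),
\end{equation*}
the last equality using the standard Alexandrov local volume density at each $x \in f^{-1}(y)$. Hence $\Vol_Z(B_r(o))/r^n$ is constant in $r>0$, and must equal $\tfrac{1}{n}\Vol(\Sigma_o(Z))$, so $\Vol_Z(B_r(o)) = \Vol_{T_o(Z)}(B_r)$ for every $r>0$. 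Now consider the logarithm map $\log_o\colon Z \to T_o(Z) = C(\Sigma_o(Z))$, $p \mapsto d_Z(o,p)\cdot\uparrow_o^p$. It is $1$-Lipschitz by Toponogov comparison (the Euclidean tangent-cone distance is bounded by the $\kappa$-model distance, hence by the $Z$-distance), volume-preserving by the above equality, and surjective by realizability of directions. Applying Theorem~\ref{t:NLi15b-1-lip} to $\log_o$ forces it to be an isometry on interiors, so $Z \cong T_o(Z) = C(\Sigma)$ with $\Sigma := \Sigma_o(Z) \in \Alex^{n-1}(1)$.

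Finally, restricting $f_\infty$ to the unit spheres of the component cones produces a $1$-Lipschitz, onto, volume-preserving map $\amalg_x \Sigma_x(X) \to \Sigma$; a further application of Theorem~\ref{t:NLi15b-1-lip} in dimension $n-1$ identifies $\Sigma$ as the path-isometric gluing of $\Sigma_{f^{-1}(y)}(X)$ along the boundary. Connectedness of $\Sigma$ follows from that of $Z=C(\Sigma)$. The main technical obstacle is verifying the global $1$-Lipschitz property and surjectivity of $\log_o$: for $\kappa \le 0$ the Toponogov comparison runs directly and $T_o(Z)\in\Alex^n(0)\subseteq\Alex^n(\kappa)$ so Theorem~\ref{t:NLi15b-1-lip} applies cleanly; for $\kappa > 0$, one first notes that the constancy of $\Vol_Z(B_r(o))/r^n$ is incompatible with the strict monotonicity of Bishop--Gromov in $\Alex^n(\kappa)$ unless $Z\in\Alex^n(0)$, reducing that case to the previous one.
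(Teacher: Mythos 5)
Your approach is genuinely different from the paper's. The paper derives the cone structure by showing that rays of the component cones $T_{x_i}(X)$ are glued to rays for all time, via an open--closed continuation argument that relies crucially on the separable-gluing classification Lemma~\ref{l:loc.Pet}; indeed the paper remarks explicitly that this lemma ``requires Lemma~\ref{l:loc.Pet}.'' You instead try a volume-rigidity route (constant density $\Rightarrow$ cone, then LV rigidity), which would bypass Lemma~\ref{l:loc.Pet} entirely. That is an interesting simplification --- but as written it has gaps.

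The most serious one is the identity $\Vol_Y(B_\rho(y)) = \Vol_X(B_\rho(f^{-1}(y)))$. Volume preservation of $f$ gives $\Vol_Y(B_\rho(y)) = \cH^n\bigl(f^{-1}(B_\rho(y))\bigr)$, and $f^{-1}(B_\rho(y)) = \{x : d_Y(f(x),y)<\rho\}$ only \emph{contains} $B_\rho(f^{-1}(y)) = \{x : d_X(x,f^{-1}(y))<\rho\}$; the inclusion can be strict because $d_Y(f(x),y) \le d_X(x,f^{-1}(y))$ and a path in $Y$ may shortcut through the glued locus. So what your computation actually yields is the one-sided bound $\Vol_Z(B_r(o)) \ge \tfrac{r^n}{n}\Vol\bigl(\Sigma_{f^{-1}(y)}(X)\bigr)$. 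The reverse inequality is not free: it is essentially equivalent to $f_\infty$ preserving the radial distance, which is the very thing the lemma is trying to establish, so as written the argument is circular. To break the circle you need an independent source for $\Vol(\Sigma_o(Z)) \le \Vol(\Sigma_{f^{-1}(y)}(X))$; Lemma~\ref{l:glue.spd}(2) supplies this (the space-of-directions gluing is a limit of volume-preserving maps), after which Bishop--Gromov in $\Alex^n(\min(\kappa,0))$ closes the inequality chain and gives constancy. You use this volume equality implicitly when you ``identify'' the constant with $\tfrac1n\Vol(\Sigma_o(Z))$, but you do not justify it.

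Two further points. First, $\log_o$ is not a well-defined map --- it requires a choice of geodesic $\geod{op}$ when geodesics are non-unique --- and ``surjective by realizability of directions'' is not a proof: geodesic directions at $o$ are dense in $\Sigma_o(Z)$ but in general not all of it. The standard repair is to use the gradient exponential $\gexp_o \colon T_o(Z) \to Z$, which is canonically defined, $1$-Lipschitz and onto, and then apply Theorem~\ref{t:NLi15b-1-lip} in that direction. Second, ``restricting $f_\infty$ to the unit spheres'' presupposes that $f_\infty$ maps each unit sphere $\{d(\cdot,x_i^*)=1\}$ \emph{into} $\{d(\cdot,o)=1\}$, i.e.\ that $f_\infty$ preserves radial distance; this is precisely the radial structure the paper works to establish. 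Once the density constancy from the previous paragraph is in place one can get it by a measure-zero argument (the set where the radial distance strictly decreases under $f_\infty$ has measure zero, hence is empty by continuity of the two distance functions), but that step has to be spelled out rather than taken for granted.

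With these repairs the route you propose does appear to work and is an attractive alternative to the paper's proof, in that it avoids Lemma~\ref{l:loc.Pet} (though it still uses Lemma~\ref{l:glue.spd}(2), as the paper does).
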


This lemma doesn't follow from the assumptions directly. In our approach it requires Lemma \ref{l:loc.Pet}, a classification of the gluing structure.


\begin{remark}\label{r:glue.tcone.1}
  Lemma \ref{l:glue.tcone} doesn't directly imply that $\Sigma$ is isometric to the space of directions $\Sigma_y$ at $y\in Y$, which should be true if $Y\in\Alexnk$, due to \cite{NLi15b}. In fact, the space of directions $\Sigma_y$ is unknown to be well-defined until $Y$ is proved to be an Alexandrov space. Lemma \ref{l:glue.tcone} doesn't directly imply the uniqueness of $T_y(Y)$ either.
\end{remark}

\begin{remark}\label{r:glue.tcone.2}
  Lemma \ref{l:glue.tcone} rules out the gluing of isolated points. For example, suppose $(Y, y)$ is glued from $(X_1,x_1)$ and $(X_2,x_2)$, where $x_1$ and $x_2$ are the only points to be glued. Then tangent cone $T_{x_1}(X_1)$ is glued with $T_{x_2}(X_2)$ only at the cone points $x_1^*\in T_{x_1}(X_1)$ and $x_2^*\in T_{x_2}(X_2)$. The glued tangent cone $T_y(Y)$ is not an Alexandrov space or a metric cone. The spaces of directions $\Sigma_{x_1}(X_1)$ and $\Sigma_{x_2}(X_2)$ are not glued and the space of directions $\Sigma_{y}(Y)$ is just the disjoint union of $\Sigma_{x_1}(X_1)$ and $\Sigma_{x_2}(X_2)$. In fact, the connectedness of $\Sigma_y$ relies on the tangent cone assumption in Lemma \ref{l:glue.tcone}.
\end{remark}


\subsection{Main results}

In view of the above discussions, it's natural to have the following Gluing Conjecture.

\begin{conjecture}[\cite{NLi15b}]\label{conj:g.Alex}
Let $X\in\Alex_\amalg^n(\kappa)$ satisfying (\ref{intro.e1}), $Y$ be a connected length metric space and $f\colon X\to Y$ be a 1-Lipschitz onto map. Then $Y\in\Alex^n(\kappa)$ if and only if the gluing induced by $f$ is by path isometry along the boundary and the tangent cone $T_{f^{-1}(y)}(X)$ glues to an Alexandrov space for every $y\in Y$ and every limit gluing map $f_\infty$.
\end{conjecture}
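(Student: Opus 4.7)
The plan is to establish the two implications separately. The forward direction (necessity) follows quickly from Theorem \ref{t:NLi15b-1-lip}, while the reverse direction (sufficiency) is the substantive content and requires a blow-up analysis anchored at each point by Lemma \ref{l:glue.tcone}. For necessity, the assumption $\cH^n(G_X)=0$ together with $f$ being 1-Lipschitz onto makes $f$ volume preserving, so Theorem \ref{t:NLi15b-1-lip} gives path isometry of $f$ with $G_X\subseteq\partial X$. For the tangent cone condition, applying Theorem \ref{t:NLi15b-1-lip} to each rescaling $f_r$ and then to the limit map $f_\infty$ shows that for every $y\in Y$ and every subsequential limit, $T_{f^{-1}(y)}(X)$ glues by path isometry to $T_y(Y)\in\Alex^n(0)$, which is Alexandrov by standard tangent cone theory.

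For sufficiency, by the Globalization Theorem it suffices to verify the local Toponogov comparison of curvature $\geq\kappa$ around each $y\in Y$. First I would use (\ref{intro.e1}) and the path isometry to show $(Y,d_Y)$ is locally compact of Hausdorff dimension $n$, has uniform non-collapsing volume bounds, and is a length space in which sufficiently short shortest paths exist; this is the basic framework needed to discuss comparison quadruples in $Y$.

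The blow-up step proceeds as follows. Suppose toward a contradiction that the Toponogov $(1+3)$-point comparison fails at $y\in Y$ at arbitrarily small scales $r_i\to 0^+$. Rescale the offending configurations by $r_i^{-1}$ and pass to a subsequence so that $(X,f^{-1}(y),r_i^{-1}d_X)$ and $(Y,y,r_i^{-1}d_Y)$ both converge in pointed Gromov--Hausdorff topology, yielding a limit gluing map $f_\infty\colon T_{f^{-1}(y)}(X)\to T_y(Y)$. By hypothesis $T_y(Y)$ is Alexandrov, so Lemma \ref{l:glue.tcone} identifies it as the metric cone $C(\Sigma)$ for some $\Sigma\in\Alex^{n-1}(1)$ and places $T_y(Y)\in\Alex^n(0)$. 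Under rescaling, the curvature $\geq\kappa$ comparison at scale $r_i$ limits to the curvature $\geq 0$ comparison in $T_y(Y)$, so the failing quadruples limit to a quadruple in $T_y(Y)$ violating Toponogov, the desired contradiction.

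The main obstacle is keeping this blow-up argument honest when the gluing near $y$ is not locally separable in Petrunin's sense. The hypotheses give only one subsequential tangent cone per sequence, and path isometry alone supplies no a priori modulus of continuity for geodesic behavior near the gluing locus; consequently one cannot trivially exclude the possibility that the failing configurations degenerate in the blow-up, collapsing along the gluing direction or drifting away from $y$. This is exactly why the paper's unconditional theorem restricts to the setting where the complement of $(n-1,\epsilon)$-regular points is discrete in the glued boundary: the local structural classifications (separable gluing at 2-point regular gluings, involutional gluing at un-glued regular points) then supply enough rigidity at \emph{every} boundary point to push the blow-up argument through. A proof of the full conjecture would presumably require either a complete classification of admissible local gluing patterns at non-regular boundary points, or an alternative compactness or regularity input that bypasses such a classification.
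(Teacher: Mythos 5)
The necessity direction is fine and matches the paper's: $\cH^n(G_X)=0$ makes $f$ volume preserving, so Theorem \ref{t:NLi15b-1-lip} (applied to $f$ and to each limit $f_\infty$) yields path isometry along the boundary and the tangent cone condition. You also correctly flag that the full statement is a conjecture that the paper proves only under the discreteness hypothesis of Theorem \ref{t:glue.disc}.

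The sufficiency sketch has a genuine gap, and it is not the one you list (collapse or drift of the configuration): a blow-up-to-contradiction argument cannot extract Alexandrov curvature from a tangent cone condition alone. If the $(1+3)$-point Toponogov comparison fails near $y$ at scale $r_i$ with deficit $\delta_i>0$, nothing forces $\liminf\delta_i>0$; generically $\delta_i\to 0$ as $r_i\to 0$, the rescaled quadruples converge to one in $T_y(Y)$ where comparison holds with equality, and no contradiction results. This is exactly the phenomenon behind the paper's remark that $\Omega\in\Alex^n_{loc}(\kappa)$ does not imply $\bar\Omega\in\Alexnk$: a smooth strictly concave domain in $\dR^n$ has a half-space tangent cone at each boundary point, and half-spaces lie in $\Alex^n(0)$, yet the closed domain is not Alexandrov. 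Thus the pointwise tangent cone hypothesis is strictly weaker than local comparison, and whatever closes the argument must exploit the Alexandrov structure of the pieces $X_i$ and the path isometry of the gluing, which your blow-up never uses. You also mischaracterize how the paper supplies this input: it does not ``push a blow-up argument through'' with extra rigidity, it abandons the contradiction scheme entirely. The actual route is to decompose $Y=\cU\cup\cA$, classify the gluing over the $(n-1,\epsilon)$-regular locus as locally separable (Lemma \ref{l:loc.Pet}) or locally involutional (Lemma \ref{l:inv.glue}) so that Petrunin's Theorem \ref{t:pet.glu} and Theorem \ref{t:Self.Glue} give $\cU\in\Alex^n_{loc}(\kappa)$ directly, and then to apply the new globalization Theorem \ref{t:glob.des.convex} using that $\cA$ is discrete and consists of convex points (Lemma \ref{l:glue.tcone}). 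The tangent cone hypothesis feeds into this local structural classification of the gluing map; it is not the endpoint of a limiting comparison.
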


\begin{remark}
  The ``if'' part has been proved as in Theorem \ref{t:NLi15b-1-lip}.
\end{remark}

\begin{remark}
  It doesn't make sense to the authors if ``tangent cone glues to an Alexandrov space" is replaced by ``spaces of directions on $X$ glue to an Alexandrov space". This is mainly because the gluing of spaces of directions is not well-defined, in particular when the tangent cones are not known to be glued into metric cones. See Remark \ref{r:glue.tcone.2} for an example.
\end{remark}

\begin{remark}
  Conjecture \ref{conj:g.Alex} can be viewed as a generalization of Petrunin's Theorem \ref{t:pet.glu}. Under the assumptions of Theorem \ref{t:pet.glu}, we have that if $x_1$ glues with $x_2$, then the spaces of directions $\Sigma_{x_1}(X_1)$ glues with $\Sigma_{x_2}(X_2)$ along their boundaries by isometry. This is because the boundary intrinsic geodesics, as quasi-geodesics, are glued with boundary geodesics. By the inductive hypothesis for $(n-1)$-dimensional Alexandrov spaces, we get that they are glued to an Alexandrov space with curv$\ge 1$. Note that the boundary geodesics glue with the boundary geodesics. Thus the tangent cones $T_{x_1}(X_1)$ and $T_{x_2}(X_2)$ are glued along the rays induced by the glued directions, which gives an Alexandrov space with curv $\ge 0$. Then one can apply Conjecture \ref{conj:g.Alex} to conclude Theorem \ref{t:pet.glu} for dimension $n$.
\end{remark}

\begin{remark}\label{r:conj1.3}
This conjecture can't be proved by applying Petrunin's Gluing Theorem pair-wisely. See Example \ref{ex:3ptgluing}.
\end{remark}

In this paper, we prove partial results for Conjecture \ref{conj:g.Alex}, as well as a number of structural results, which remain true in the general case. This is also an important step towards the full solution of Conjecture \ref{conj:g.Alex}. Let $\cR^{n-1}_\epsilon(X)$ be the collection of of $(n-1,\epsilon)$-strained point $p\in\partial X$, whose tangent cone $T_p(X)$ is $\epsilon$-close to splitting off $\dR^{n-1}$ isometrically. Namely, if $p\in\cR^{n-1}_\epsilon(X)$ and $T_p(X)=C(\Sigma_p)$ with cone point $p^*$, then there is a metric space $Z$ and $z\in Z\times\dR^{n-1}$ so that $d_{GH}(B_1(p^*), B_1(z))<\epsilon$. Let $F_X$ be the set of points in $\partial X$ that are not glued with any other point. Let $F_X^\circ$ be the interior of $F_X$ in the boundary topology.

A main difficulty to prove Conjecture \ref{conj:g.Alex} is the possibility of mixed types of gluing in small domain. We overcome this issue by classifying the $2$-point gluing over $\cR^{n-1}_\epsilon(X)$ as local separable gluing and the gluing near $F_X\cap \cR^{n-1}_\epsilon(X)$ as involutional gluing. 
\begin{theorem}[Local gluing structure]\label{t:loc_glue}
  Let $B_R^{\partial X} (p)$ denote the $R$-ball contained in $\partial X$ with respect to the boundary intrinsic metric. Under the assumptions of Conjecture \ref{conj:g.Alex}, there exists $\epsilon_0(X)>0$ small so that the following hold for any $0<\epsilon<\epsilon_0$.
  \begin{enumerate}
  \renewcommand{\labelenumi}{(\roman{enumi})}
  \setlength{\itemsep}{1pt}
  \item For any $p_1, p_2\in G_X\cap\, \cR^{n-1}_\epsilon(X)$ with $f^{-1}(f(p_1))=f^{-1}(f(p_2))=\{p_1, p_2\}$, there exists $R>0$ so that $B_{R}(p_1)\cap B_{R}(p_2)=\varnothing$ and the gluing over $\partial X\cap B_{R}^{\partial X}(p_1)$ and $B_{R}^{\partial X}(p_2)$ is induced by an intrinsic isometry $\phi\colon B_{R}^{\partial X}(p_1)\to B_{R}^{\partial X}(p_2)$.
  \item For any $p\in F_X\cap \cR^{n-1}_\epsilon(X)$, there exists $r>0$ and an isometric involution $\psi\colon B_r^{\partial X}(p)\to B_r^{\partial X}(p)$ with $\psi(p)=p$, so that the gluing over $B_r^{\partial X}(p)$ is induced by the identification $x\sim\psi(x)$.
\end{enumerate}

\end{theorem}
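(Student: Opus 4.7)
The strategy is to pass to tangent cones at $(n-1,\epsilon)$-regular points, where the geometry is $\epsilon$-close to Euclidean, read off the forced gluing pattern on the flat model, and then rigidify back to an actual boundary neighborhood using the path isometry hypothesis together with the upper semi-continuity of the gluing relation built into Definition~\ref{d:gluing}.

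For part~(i), at a pair $p_1,p_2\in G_X\cap\cR^{n-1}_\epsilon(X)$ with $f^{-1}(f(p_i))=\{p_1,p_2\}$, each tangent cone $T_{p_i}(X)$ is $\epsilon$-close to the flat half-space $\dR^{n-1}\times[0,\infty)$. By Lemma~\ref{l:glue.tcone}, every subsequential limit gluing map $f_\infty$ glues $T_{p_1}(X)\amalg T_{p_2}(X)$ into a metric cone $T_{f(p_1)}(Y)\in\Alex^n(0)$, along the boundary copies of $\dR^{n-1}$ by path isometry. Since $T_{f(p_1)}(Y)$ lies in $\Alex^n(0)$ and is $O(\epsilon)$-close to the doubling of a half-space, it is forced to be close to $\dR^n$, and the boundary identification on the cone level is close to a Euclidean isometry of $\dR^{n-1}$. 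I would then choose $R>0$ small, and use the non-collapsing assumption~(\ref{intro.e1}), the openness of $\cR^{n-1}_\epsilon$ in $\partial X$, and the closedness of the equivalence relation to conclude that for every $x\in B_R^{\partial X}(p_1)$ the fiber $f^{-1}(f(x))$ contains exactly one other point, which I name $\phi(x)\in B_R^{\partial X}(p_2)$. The path isometry hypothesis then forces any rectifiable curve in $B_R^{\partial X}(p_1)$ and its $\phi$-image to have equal intrinsic length, so $\phi$ is the desired intrinsic isometry.

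For part~(ii), at $p\in F_X\cap\cR^{n-1}_\epsilon(X)$ the tangent cone $T_p(X)$ is again $\epsilon$-close to a half-space and $f^{-1}(f(p))=\{p\}$. Lemma~\ref{l:glue.tcone} provides a limit gluing map $f_\infty$ producing a metric cone $T_{f(p)}(Y)\in\Alex^n(0)$ built as a self-gluing of $T_p(X)$ by path isometry along its boundary. Any such self-gluing must fix the cone point and act on the boundary $\dR^{n-1}$ by an isometric involution; the only such involutions of $\dR^{n-1}$ with the cone-point origin as a fixed point are reflections across $(n-2)$-dimensional linear hyperplanes. I would then define $\psi$ on $B_r^{\partial X}(p)$ by assigning to each glued $x$ its unique partner under $f$, and setting $\psi(x)=x$ on $F_X\cap B_r^{\partial X}(p)$ (which at the tangent level corresponds to the fixed hyperplane); continuity across the $F_X$-locus is supplied by the closedness clause in Definition~\ref{d:gluing}, and $\psi$ is an isometric involution via the same path-isometry argument used in part~(i).

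The key obstacle in both parts is the \emph{rigidification} step, which upgrades the exact Euclidean identification at the tangent level to a genuine local intrinsic isometry of a boundary ball of positive radius. Here I expect to combine two ingredients: (a)~quantitative $(n-1,\epsilon)$-regularity propagating to nearby boundary points by continuity of tangent cones, so the two-point (respectively one-point) fiber structure is stable on a uniform scale, and (b)~the volume-preserving property of $f$ together with~(\ref{intro.e1}), which prevents spurious third sheets, extra un-glued points, or mixed gluing types from accumulating arbitrarily close to $p_i$. A standard open-closed argument on the subset of $B_R^{\partial X}(p_1)$ where $\phi$ (resp.\ $\psi$) is already defined and locally an intrinsic isometry should then close the proof, and this is likely the place where the unavailability of a direct pair-wise application of Petrunin's theorem (Remark~\ref{r:conj1.3}) must be circumvented by the tangent-cone rigidity alone.
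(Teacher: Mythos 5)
Your preparatory steps (blow-up at $(n-1,\epsilon)$-regular points, the volume bound from (\ref{intro.e1}) and Lemma \ref{l:vol} to force $|f^{-1}(f(x))|\le 2$ near $p_1,p_2$, and a compactness argument to keep gluing partners near $f^{-1}(f(p_1))$) do match the beginning of the paper's proof of Lemma \ref{l:loc.Pet} and Lemma \ref{l:inv.glue}. But the step you yourself flag as ``the key obstacle'' is exactly where the proposal has no argument. Closedness of the gluing relation only says that limits of glued pairs are glued; it works in the opposite direction from what you need, namely that every $x\in B_R^{\partial X}(p_1)$ \emph{is} glued with a point of $B_R^{\partial X}(p_2)$. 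A priori, points arbitrarily close to $p_1$ could be unglued, or glued with other points of $B_R^{\partial X}(p_1)$ itself; this mixed behaviour genuinely occurs along $\overset\frown{A_1B_1}$ in Example \ref{ex:3ptgluing}, so it cannot be dismissed by ``regularity propagating to nearby points'' or by the volume bound (which only caps the fiber cardinality at $2$, and does not exclude fibers of cardinality $1$). Moreover, your proposed open-closed argument cannot start: what one can show (and the paper does show) is that the set $\cC$ of points \emph{not} glued into the $p_2$-side is open, so the locus where your $\phi$ would be defined is only closed, and its openness is equivalent to the statement being proved. Similarly, in part (ii) ``continuity across the $F_X$-locus'' does not follow from closedness, and your claim that the limit self-gluing of the half-space must be a reflection across an $(n-2)$-plane is both unjustified and unnecessary (other isometric involutions fixing the cone point, e.g.\ the antipodal map of the boundary $\dR^{n-1}$, yield Alexandrov cones such as $C(\dR P^2)$ and are not excluded at the tangent level); the paper's Lemma \ref{l:inv.glue} never uses such a classification.

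The paper closes the gap with two ingredients absent from your proposal: an induction on dimension and a point-picking blow-up. One introduces the function $g_r(x)$ measuring the largest ball of $\cC$ inside $B_r^{\partial X}(x)$, shows $g_r<\tfrac23$ at small scales centered at points outside $\cC$ (using the almost-conical structure at regular points), and by repeatedly re-centering produces a point $u\notin\cC$, glued with some $v$ near $p_2$, together with scales $r_i\to 0$ and balls $B_{\frac56 r_i}^{\partial X}(w_i)\subseteq B_{8r_i}^{\partial X}(u)\cap\cC$. Rescaling by $r_i^{-1}$ then exhibits a whole sector of directions at $u$, in particular $\uparrow_{u^*}^{w}$, glued with nothing in $\Sigma_v(X)$; on the other hand, Lemma \ref{l:glue.spd} glues $\Sigma_u\amalg\Sigma_v$ into a connected Alexandrov space, and the \emph{inductive hypothesis} (the same lemma one dimension lower, applicable because tangent cones of the glued spaces of directions again glue to Alexandrov spaces) supplies the openness needed to run an open-closed argument on $\partial\Sigma_u$, showing every boundary direction at $u$ is glued with one at $v$ --- a contradiction. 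The careful choice of $u$ is essential: blowing up at an arbitrary point could let the unglued set disappear in the limit. Without the dimensional induction and this point-picking mechanism, the rigidification from the tangent-cone picture to an actual isometry $\phi$ (resp.\ involution $\psi$) on a ball of definite radius is not established, so the proposal as written does not prove the theorem.
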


This result provides more detailed structures for the gluing map $f$, which can also be viewed as an extension for the Lipschitz-Volume Rigidity Theorem in \cite{NLi15b}. Its proof relies on the tangent cone conditions in Conjecture \ref{conj:g.Alex}. This phenomenal is in the flavor of controlling local geometry using infinitesimal geometry. Combing these results and the new globalization theorems we develop in this paper, we are able to prove the following theorem.

\begin{theorem}[Main result]\label{t:glue.disc}
Conjecture \ref{conj:g.Alex} holds if $\partial X\setminus(F_X^\circ\cup\cR^{n-1}_\epsilon(X))$ is discrete for any $\epsilon>0$.
\end{theorem}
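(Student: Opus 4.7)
The plan is to leverage Theorem \ref{t:loc_glue} to reduce the gluing, away from a discrete singular set, to the standard Petrunin--Perelman situations, and then to promote the resulting local Alexandrov structure across the isolated singularities by means of the new globalization theorems developed in the paper, using the tangent-cone hypothesis of Conjecture \ref{conj:g.Alex} as the key input at the bad points.

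Fix $\epsilon < \epsilon_0(X)$ from Theorem \ref{t:loc_glue}, and set $S = \partial X \setminus (F_X^\circ \cup \cR^{n-1}_\epsilon(X))$; by hypothesis $S$ is discrete in $\partial X$, hence $f(S) \subset Y$ is discrete as well. First I would show that every $y \in Y \setminus f(S)$ has a neighborhood lying in $\Alex^n(\kappa)$, by examining each preimage $p \in f^{-1}(y)$. If $p \in X^\circ$, then $f$ is a local isometry at $p$; if $p \in F_X^\circ$, then no gluing occurs in a boundary-neighborhood of $p$, so $f$ is again a local isometry on a small half-ball at $p$ and $Y$ inherits the Alex bound from $X_i$. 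If $p \in G_X \cap \cR^{n-1}_\epsilon(X)$ is part of a 2-fold gluing, Theorem \ref{t:loc_glue}(i) produces a boundary isometry identifying the two local pieces; if $p \in F_X \cap \cR^{n-1}_\epsilon(X)$, Theorem \ref{t:loc_glue}(ii) gives a local isometric involution. In either case Petrunin's Theorem \ref{t:pet.glu} (with the involution case being Perelman's doubling) applies locally, yielding a neighborhood of $f(p)$ in $\Alex^n(\kappa)$. Multi-point gluings of order $k \geq 3$ over a point of $\cR^{n-1}_\epsilon(X)$ must be excluded, and this I would do via the tangent-cone hypothesis: gluing $k \geq 3$ nearly-Euclidean half-spaces along their boundary faces produces a tangent cone whose space of directions contains a \emph{book}-type configuration with total angle exceeding $\pi$, violating $\Alex^{n-1}(1)$ and hence incompatible with $T_y(Y) \in \Alex^n(0)$.

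The second stage is to globalize across $f(S)$. For each $y_0 \in f(S)$, the tangent-cone assumption of Conjecture \ref{conj:g.Alex} together with Lemma \ref{l:glue.tcone} gives that $T_{y_0}(Y) = C(\Sigma_{y_0})$ is a metric cone over some connected $\Sigma_{y_0} \in \Alex^{n-1}(1)$. Combined with the local Alex bound on $Y \setminus f(S)$ obtained in Step~1, this should enable verification of Toponogov's quadruple comparison on $Y$: for quadruples whose minimizing geodesics avoid $f(S)$, comparison follows from the classical BGP globalization applied to the open Alex subset $Y \setminus f(S)$; such quadruples are dense because $f(S)$ is discrete and hence of codimension $n \geq 2$ in $Y$; and the cone structure at each $y_0$ should allow comparison to pass to the limit for quadruples meeting $f(S)$.

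The main obstacle is this last globalization step across $f(S)$. The classical BGP globalization cannot be invoked directly at the isolated points $y_0 \in f(S)$, since local Alex structure is unknown there, and the new globalization machinery advertised in the introduction is precisely what bridges this gap. The technical heart will be to show that minimizing geodesics in $Y$ can either be perturbed away from the discrete set $f(S)$, or, if they must meet some $y_0$, that Toponogov comparison survives the limit thanks to the metric-cone structure of $T_{y_0}(Y)$ and the fact that $\Sigma_{y_0} \in \Alex^{n-1}(1)$. Combining this globalization with Step~1 yields $Y \in \Alex^n(\kappa)$, establishing Conjecture \ref{conj:g.Alex} under the stated discreteness hypothesis.
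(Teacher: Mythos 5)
Your proposal follows essentially the same route as the paper: decompose $Y$ into the open part $\cU = Y\setminus f(S)$ and the discrete set $\cA = f(S)$, establish $\cU\in\Alex_{loc}^n(\kappa)$ via Theorem \ref{t:loc_glue} combined with Petrunin's gluing theorem and the involutional gluing theorem, and then globalize across the discrete singular set using the tangent-cone hypothesis, which via Lemma \ref{l:glue.tcone} provides exactly the metric-cone/convexity structure needed to invoke the new globalization Theorem \ref{t:glob.des}. This mirrors the paper's argument, which passes through Lemma \ref{l:glue.disc.st} (using Lemma \ref{l:decomp.X.U} for the local Alexandrov bound and Lemma \ref{l:glue.tcone} for convexity of $\cA$) and concludes with Theorem \ref{t:glob.des.convex}.
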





See Remark \ref{r:glob.des.convex} for the difficulty of $\partial X\setminus\cR^{n-1}_\epsilon(X)$ being not discrete. It's also worth to point out that our result is different from the branched cover problem discussed in \cite{GW2014}, where the gluing structure is explicitly defined, for which the 1-Lipschitz gradient exponential is well-defined in the glued space.

It's an easy exercise (or see \cite{LiNa19} for more general results) to show that $\partial X\setminus\cR^{1}_\epsilon(X)$ is discrete if $\dim(X)=2$. Thus the discreteness condition is automatically satisfied for 2-dimensional Alexandrov spaces. The following result follows from Theorem \ref{t:glue.disc}.

\begin{theorem}
  Conjecture \ref{conj:g.Alex} holds if $n=2$.
\end{theorem}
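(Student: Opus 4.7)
The plan is to reduce the statement directly to Theorem \ref{t:glue.disc}. Concretely, I will verify that in dimension two the discreteness hypothesis of Theorem \ref{t:glue.disc} is automatic, by analyzing the boundary structure of a $2$-dimensional Alexandrov space and the geometric meaning of $(1,\epsilon)$-regularity. Once this discreteness is in hand, Theorem \ref{t:glue.disc} applies verbatim and delivers the conclusion.

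First, I would record what $(1,\epsilon)$-regularity means in dimension $2$. At a boundary point $p\in\partial X$ with $X\in\Alex^2(\kappa)$, the space of directions $\Sigma_p$ is a $1$-dimensional Alexandrov space with curvature $\ge 1$ and with non-empty boundary, hence an arc of length $\omega(p)\in(0,\pi]$. The tangent cone $T_p(X)=C(\Sigma_p)$ is therefore a flat planar sector of total angle $\omega(p)$. Such a sector is $\epsilon$-close (in pointed Gromov--Hausdorff distance on the unit ball) to the half-plane $\RR\times[0,\infty)=\RR\times C(\mathrm{pt})$ if and only if $|\omega(p)-\pi|<\delta(\epsilon)$, with $\delta(\epsilon)\to 0$ as $\epsilon\to 0$. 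Consequently,
\begin{equation*}
\partial X\setminus\cR^{1}_\epsilon(X)\;=\;\{\,p\in\partial X:\omega(p)\le\pi-\delta(\epsilon)\,\},
\end{equation*}
i.e.\ the set of genuine corners whose total interior angle is bounded away from $\pi$.

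The next step is to show that this corner set is locally finite in $\partial X$. Suppose for contradiction that corner points $p_i\to p_\infty$ in $\partial X$ with $\omega(p_i)\le\pi-\delta$. The boundary $\partial X$ is a $1$-dimensional Alexandrov space in its intrinsic length metric and, locally near $p_\infty$, the points $p_i$ lie on a boundary arc that is a rectifiable curve. Pick on the boundary arc neighboring boundary points $q_i^\pm$ of $p_i$ and apply the angle comparison (hinge) in $X$ to the hinge with vertex $p_i$ and sides to $q_i^\pm$: the angle of the hinge at $p_i$ is at most $\omega(p_i)\le\pi-\delta$, so each $p_i$ contributes a definite positive angle defect. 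Summing these defects on a compact boundary arc violates the Gauss--Bonnet/curvature-measure estimate for $2$-dimensional Alexandrov spaces (equivalently, one can argue directly that infinitely many corners with defect $\ge\delta$ accumulating at $p_\infty$ force $\Sigma_{p_\infty}$ to have length strictly less than any positive number, which is absurd). This forces the corner set to be discrete. The same conclusion is proved in greater generality in \cite{LiNa19} and is noted as an easy exercise in the excerpt just before the statement.

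Combining the two observations, $\partial X\setminus\cR^{1}_\epsilon(X)$ is discrete for every $\epsilon>0$, and a fortiori so is the smaller set $\partial X\setminus(F_X^\circ\cup\cR^{1}_\epsilon(X))$. The hypothesis of Theorem \ref{t:glue.disc} is therefore satisfied with $n=2$, and its conclusion is exactly Conjecture \ref{conj:g.Alex} in this dimension. The main (and only real) obstacle here is the corner-discreteness assertion, but in dimension two it is a classical consequence of the hinge/angle comparison for $\Alex^2(\kappa)$ and can be cited; no new machinery beyond Theorem \ref{t:glue.disc} is required.
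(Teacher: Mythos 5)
Your proposal is correct and follows the paper's own route: the paper simply observes that $\partial X\setminus\cR^{1}_\epsilon(X)$ is discrete in dimension two (citing it as an easy exercise or as a special case of \cite{LiNa19}), which a fortiori gives discreteness of the smaller set $\partial X\setminus(F_X^\circ\cup\cR^{1}_\epsilon(X))$ required by Theorem \ref{t:glue.disc}. Your added discussion of the corner set $\{\omega(p)\le\pi-\delta(\epsilon)\}$ and the angle-defect/Gauss--Bonnet reason for its discreteness just spells out what the paper leaves as an exercise; the logical skeleton is identical.
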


Moreover, in dimension 2, the tangent cone condition can be reduced to the restrictions on the sum of the glued angles, which are much easier to check. With some extra work, Theorem \ref{t:glue.disc} implies the following result. Suppose that $Y$ is glued from $X\in\Alex_\amalg^2(\kappa)$. Let $\partial Y$ be the collection of $y\in f(\partial X)$ so that some part of the boundary near $f^{-1}(y)$ is preserved over the gluing. That is, there is a curve $\gamma\colon[0,\epsilon)\to \partial X$ so that $f(\gamma(0))= y$ and $f^{-1}(f(\gamma(t)))=\gamma(t)$ for every $0<t<\epsilon$. Let $\Theta_x$, $x\in X$, denote the cone angle of the tangent cone $T_x(X)$, which is equal to the parameter of the space of directions $\Sigma_{x}(X)$.

\begin{theorem}[Gluing of 2-dimensional Alexandrov spaces]\label{t:glue.dim2}
Let $Y$ be a connected length metric space glued from $X\in\Alex_\amalg^2(\kappa)$ satisfying (\ref{intro.e1}) by path isometry along the boundary, and without isolated gluing point. If $\dsp\sum_{f(x)=y}\Theta_{x}\le2\pi$ for every $y\in Y$ and $\dsp\sum_{f(x)=y}\Theta_{x}\le\pi$ for every $y\in\partial Y$, then $Y\in\Alex^2(\kappa)$.
\end{theorem}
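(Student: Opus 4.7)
The plan is to invoke the $n=2$ case of Conjecture \ref{conj:g.Alex} established in the unnamed theorem directly preceding this statement (itself a consequence of Theorem \ref{t:glue.disc} together with the discreteness of $\partial X\setminus\cR^1_\epsilon(X)$ in dimension $2$). All hypotheses of that conjecture are in place except for the tangent-cone condition, so the entire task is: for every $y\in Y$ and every limit gluing map $f_\infty$, show that $T_{f^{-1}(y)}(X)$ glues to an Alexandrov space via $f_\infty$.

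Fix such a $y$ and, using (\ref{intro.e1}), write $f^{-1}(y)=\{x_1,\ldots,x_k\}$ with $k<\infty$. In dimension $2$, each tangent cone $T_{x_i}(X)=C(\Sigma_{x_i})$ is a flat Euclidean fan of cone angle $\Theta_{x_i}$; $\Sigma_{x_i}$ is a circle when $x_i$ is interior (forcing $k=1$, since gluing occurs only along boundaries) and an arc of length $\Theta_{x_i}\le\pi$ otherwise. I analyze $f_\infty$ through the multigraph $\Gamma$ whose vertices are the fans $T_{x_i}(X)$ and whose edges are the pairs of boundary rays matched under $f_\infty$. Path isometry, combined with the elementary observation that any isometry of a closed half-line fixing its origin is the identity, shows that each boundary ray is either fully identified with exactly one other ray or left free; any partial identification out to a positive radius $T$ rescales to $T/r_i\to\infty$ and becomes a full identification in the limit. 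Hence every vertex of $\Gamma$ has degree at most $2$.

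Next I argue that $\Gamma$ is connected. For any two $x_i,x_j\in f^{-1}(y)$, both are gluing points; the no-isolated-gluing hypothesis together with Definition \ref{d:gluing} yields sequences of gluing points $p_n\to x_i$ and $q_n\to x_j$ with $p_n$ glued to $q_n$ for all $n$. Rescaling these matched pairs of boundary curves under $f_\infty$ produces a chain of ray matchings connecting $T_{x_i}(X)$ to $T_{x_j}(X)$ in $\Gamma$. Thus $\Gamma$ is either a single cycle or a single simple path, and $T_y(Y)\cong C(\Sigma)$, where $\Sigma$ is a circle of length $\sum_i\Theta_{x_i}$ (cycle case) or an arc of the same total length (path case). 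In the path case the two unmatched rays correspond to boundary curves of $X$ that remain unglued along a sequence accumulating at $y$, so $y\in\partial Y$; in the cycle case no such curve exists, so $y\notin\partial Y$.

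The hypothesized angle bounds now close the argument: $\sum_i\Theta_{x_i}\le 2\pi$ in the cycle case and $\sum_i\Theta_{x_i}\le\pi$ in the path case place $\Sigma\in\Alex^1(1)$, so $T_y(Y)=C(\Sigma)\in\Alex^2(0)$. The $n=2$ case of Conjecture \ref{conj:g.Alex} then delivers $Y\in\Alex^2(\kappa)$. The main obstacle I expect is the combinatorial/infinitesimal step that forces $\Gamma$ into cycle-or-path form, in particular excluding partial identifications, non-trivial self-identifications of a single ray, and disconnected components. Each requires careful interplay between path isometry, the structure of isometries of half-lines, Lemma \ref{l:glue.tcone}, and the no-isolated-gluing hypothesis; once carried out, the angle bounds translate directly into the Alexandrov condition on each tangent cone and the proof concludes.
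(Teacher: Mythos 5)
Your overall framework is right (verify the tangent-cone hypothesis and hand off to the $n=2$ case of Conjecture \ref{conj:g.Alex}), and your combinatorial picture --- a multigraph of flat sectors with matched boundary rays, forced into a cycle (interior point) or simple path (boundary point) --- is a correct description of what the glued tangent cone ought to look like. But the key step, that each boundary ray under $f_\infty$ is \emph{entirely} identified with exactly one other ray or entirely free, is precisely the hard part, and your justification does not actually establish it.

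The flaw in the rescaling heuristic: $f_\infty$ is produced by a fixed subsequence $r_i\to 0$ and is not a priori scale-invariant; you cannot ``rescale again'' after passing to the limit, and the paper explicitly warns (end of \S 1.3) that $T_y(Y)$, hence $f_\infty$, may depend on the subsequence. More fundamentally, without further input the gluing on $\partial X$ near a point $x_i$ could be Cantor-like along a boundary arc --- glued on a set of positive measure, unglued on its complement --- and such a pattern survives rescaling, producing an $f_\infty$ that does not glue whole rays. Path isometry plus the structure of isometries of half-lines says nothing against this: it only controls the identifications on the subset where gluing actually happens. The paper closes this gap by working in $X$ directly: it decomposes $B^{\partial X}_r(p)\setminus\{p\}$ into the two gradient arcs $A_\xi(r)$ and $A_\eta(r)$, looks at the set $\cC_\xi$ of unglued points on one arc, and shows $\cC_\xi$ is all-or-nothing. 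The crucial mechanism is the \emph{boundary} angle bound $\sum_{f(x)=y}\Theta_x\le\pi$: an interior endpoint of a component of $\cC_\xi$ would be a regular boundary point of $Y$ glued with some $x$, forcing $\pi\ge\Theta_{g\exp_p(a\xi)}+\Theta_x\ge\pi-\delta(\epsilon)+c(X)$, a contradiction. Your proof never uses the $\pi$ bound in this step and so cannot exclude the Cantor-like scenario; it only uses the angle bounds at the very end, after the cone structure has already been assumed.

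A secondary gap: the structural lemmas you lean on (Lemma \ref{l:glue.tcone}, and implicitly \ref{l:glue.spd}, \ref{l:loc.Pet}, \ref{l:inv.glue}) are proved in the paper under the very tangent-cone hypothesis that you are trying to verify, so invoking them here is circular unless one re-derives them under the angle hypothesis. The paper flags exactly this and notes that those lemmas go through in dimension $2$ once $\sum\Theta_x\le 2\pi$ is assumed, because their proofs rest on volume (i.e.\ angle) comparison, but this substitution must be stated and checked. Finally, your connectivity argument for $\Gamma$ is too quick: the ``no isolated gluing point'' hypothesis rules out a specific degenerate gluing of two cone points, but it does not by itself supply sequences of glued pairs $p_n\to x_i$, $q_n\to x_j$ as you assert; in the paper's proof connectivity of the glued $\Sigma$ is instead a byproduct of the gradient-curve structure plus Lemma \ref{l:glue.spd}(2). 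With the $\cC_\xi$ all-or-nothing step supplied and the lemmas re-justified under the angle hypothesis, your multigraph description becomes a valid organizing picture, but as written the central step is asserted rather than proven.
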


This theorem is a full generalization for Petrunin's Theorem \ref{t:pet.glu} in dimension 2, as it allows mixed types of separable gluing, partial gluing and self-gluing of multiple points along multiple Alexandrov spaces, as long as the angles are matched up correctly. In particular, Example \ref{ex:3ptgluing} can be handled by this result.

\begin{remark}
  See Remark \ref{r:glue.tcone.2} for the reason to exclude the gluing of isolated points. 
\end{remark}

\begin{remark}
  The gluing problem of polygons in Euclidean spaces is a classical problem. For example in Chapter VI of \cite{AZ1967}, the authors studied the gluing of polygons. It was known that polygons in $\dR^2$ glue to Alexandrov spaces by path isometry along the boundary if and only if the glued angles satisfy the conditions in Theorem \ref{t:glue.dim2}. In this sense, Theorem \ref{t:glue.dim2} is a generalization of such a result, which allows the gluing of any 2 dimensional Alexandrov spaces.
\end{remark}



In the course of the proof for the main results, we develop a number of new theorems on the gluing and globalization of Alexandrov spaces. Let us discuss some of them below.

An open set $U\subseteq Y$ is said to be an Alexandrov $\kappa$-domain, if $\kappa$-Toponogov comparison holds for any geodesic triangle in $U$. Let $\Omega\subseteq Y$ be a set. By the notation $\Omega\in\Alex^n_{loc}(\kappa)$ we mean that for any $p\in \Omega$ there is an $n$-dimensional Alexandrov $\kappa$-domain $U$ so that $p\in U\subseteq Y$. The following theorem is a generalization of the results on an involutional gluing theorem of cones (see Theorem B in \cite{LR2012}).

\begin{theorem}[Involutional Gluing] \label{t:Self.Glue}
Let $B_{R}(x)\subseteq X$ be an Alexandrov $\kappa$-domain with an isometric involution $\phi\colon B_{R}(x)\cap\partial X\to B_{R}(x)\cap\partial X$. Let $B_{R}(x)/(x\sim\phi(x))$ be the quotient space and $f\colon B_{R}(x)\to B_{R}(x)/(x\sim\phi(x))$ be the projection map. Then $B_{R/10}(f(x))$ is an Alexandrov $\kappa$-domain.
\end{theorem}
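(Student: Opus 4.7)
The plan is to reduce the self-gluing to Petrunin's Gluing Theorem \ref{t:pet.glu} by a \emph{twisted doubling} followed by taking an isometric $\mathbb Z/2$-quotient. Let $X_1$ and $X_2$ be two isometric copies of $B_R(x)$ with corresponding marked points $x_1,x_2$, and form
\[
D \;=\; (X_1 \amalg X_2)\big/\!\!\sim\,, \qquad (p,1)\sim(\phi(p),2) \text{ for all } p\in B_R(x)\cap\partial X.
\]
Since $\phi$ is an isometric involution of $B_R(x)\cap\partial X$ in the boundary intrinsic metric, this identification glues $X_1$ and $X_2$ along their boundaries by an intrinsic isometry. A local version of Petrunin's theorem then says that a neighborhood of the seam of radius on the order of $R/2$ inside $D$ is an Alexandrov $\kappa$-domain.

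Next I would introduce the swap involution $\iota\colon D\to D$, $\iota([q,i])=[q,3-i]$. It is well defined because for $p\in\partial X\cap B_R(x)$ the equivalence class $\{(p,1),(\phi(p),2)\}$ is sent by $\iota$ to $\{(p,2),(\phi(p),1)\}$, which is a single class since $\phi\circ\phi=\mathrm{id}$ forces $(\phi(p),1)\sim(\phi(\phi(p)),2)=(p,2)$ under the defining relation. The symmetry of the relation immediately shows $\iota$ is an isometry, so it generates an isometric $\mathbb Z/2$-action on the Alexandrov $\kappa$-domain $D$, and the quotient $D/\iota$ is therefore an Alexandrov $\kappa$-domain on a slightly smaller ball by the standard isometric quotient theorem. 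Taking $X_1$ as a fundamental domain for $\iota$, each orbit has a representative in $X_1$: for interior $q\in B_R(x)^\circ$ no further identification arises, while for $q\in\partial X\cap B_R(x)$ the orbit $\{(q,1),(q,2)\}=\{(q,1),(\phi(q),1)\}$ identifies $q$ with $\phi(q)$. Thus $D/\iota$ is canonically isometric to $B_R(x)/(q\sim\phi(q))$, and the factor $R/10$ leaves comfortable buffer inside the radius of validity of each local assertion above.

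The main obstacle is the first step: Theorem \ref{t:pet.glu} as stated requires complete Alexandrov spaces, whereas $B_R(x)$ is only a $\kappa$-domain. The cleanest workaround is to extend $B_R(x)$ to a complete Alexandrov space $\widetilde X$ containing $\bar B_{R-\epsilon}(x)$ (for instance by Perelman's doubling of a slightly larger $\kappa$-domain across an interior fattening of $\partial X$, then capping off smoothly), carry out the twisted doubling in this larger ambient space so that Theorem \ref{t:pet.glu} applies globally, and then restrict back to a neighborhood of the seam in the original $D$. The radius loss from $R$ to $R/10$ comfortably absorbs the contractions incurred by this extension step together with the final $\mathbb Z/2$-quotient. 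A secondary technical point is that $\iota$ preserves distances across the seam; this follows formally from the fact that any minimizing broken geodesic in $D$ of the form $\alpha_1\cdot\alpha_2$ (with pieces in $X_1$ and $X_2$) has an $\iota$-image of equal length, since the gluing relation $(p,1)\sim(\phi(p),2)$ is literally invariant under swapping the two indices simultaneously with application of $\phi$.
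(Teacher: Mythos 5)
The student's strategy agrees with the paper's for the first half and diverges in a clean way for the second. Both proofs begin by forming the twisted double $\widehat U = U_1\amalg U_2/(p\sim\phi(p))$ from two copies of the domain and invoking Petrunin's Gluing Theorem~\ref{t:pet.glu} to see that $\widehat U$ is an Alexandrov $\kappa$-domain (you are more careful than the paper in noting that Theorem~\ref{t:pet.glu} is stated for complete spaces and needs a local extension argument before it applies to the ball; that observation is correct). The divergence is in how one passes from $\widehat U$ back to $V = B_R(x)/(p\sim\phi(p))$. The paper constructs the natural $2$-to-$1$ projection $\hat f\colon\widehat U\to V$, shows by hand that it has the length preserving lifting property (building lifts that switch sheets at every boundary crossing), and then applies Theorem~\ref{t:lift.Alex}. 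You instead observe that $\hat f$ is precisely the quotient map by the isometric swap involution $\iota([q,i]) = [q,3-i]$, and you appeal to the standard Alexandrov quotient theorem for compact isometric group actions. This is a genuinely tidy reformulation not made explicit in the paper, and your verification that $\iota$ is well defined, that it is an isometry, and that $\widehat U/\iota$ is canonically $V$ (by taking $U_1$ as fundamental domain) is correct. The one thing to flag is that the ``standard isometric quotient theorem'' you invoke is ordinarily stated for complete Alexandrov spaces; for domains one needs a localization, and the paper's Theorem~\ref{t:lift.Alex} together with the verification of the lifting property is essentially a self-contained proof of exactly that localized quotient statement (the orbit map of a compact isometric action has the length preserving lifting property, so Theorem~\ref{t:lift.Alex} then applies). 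So your proof is correct, and the exchange of currencies is: you get to cite a familiar theorem and skip the hands-on lift construction, while the paper's route makes the local nature of the argument explicit and avoids any appeal to quotient-theoretic folklore for non-complete spaces.
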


Note that $\Omega\in\Alex^n_{loc}(\kappa)$ doesn't imply that its metric completion $\bar\Omega$ is an Alexandrov space, even if $\Omega$ is full measure in $\bar\Omega$. For example, any open domain $\Omega\subset\dR^n$ satisfies $\Omega\in\Alex^n_{loc}(0)$, but the closure $\bar{\Omega}$ is not Alexandrov if $\Omega$ is strictly concave. See \cite{BGP}, \cite{Pet2016} and \cite{NLi15a} for more work on the globalization theorems concerning this issue. In this paper, we prove the following globalization theorem.

\begin{theorem}[Globalization]\label{t:glob.des}
Let $Y$ be a length metric space with $\cU\subseteq Y$ and $\cA=Y\setminus \cU$. Then $Y\in\Alex^n(\kappa)$ if the following are satisfied.
\begin{enumerate}
\renewcommand{\labelenumi}{(\arabic{enumi})}
\setlength{\itemsep}{1pt}
\item $\cU\in\Alex_{loc}^n(\kappa)$.
\item $\cA$ is discrete and for every point $p\in\cA$, the tangent cone $T_p(Y)$ exists and isometric to a metric cone $C(\Sigma)\in\Alex^{n}(0)$.
\end{enumerate}
\end{theorem}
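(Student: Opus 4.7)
The plan is to reduce to a local statement and then use the tangent--cone hypothesis at $\cA$ to bridge the gap between $\cU$ and $Y$. By the Toponogov globalization theorem (see \cite{BGP,Pet2016}), $Y\in\Alex^n(\kappa)$ as soon as every point admits a neighborhood that is an Alexandrov $\kappa$-domain, i.e.\ $Y\in\Alex^n_{loc}(\kappa)$. For $p\in\cU$ this is hypothesis (1), so the entire content of the theorem is concentrated at the isolated set $\cA$: I must produce, for each $p\in\cA$, some $r>0$ such that $B_r(p)$ is an Alexandrov $\kappa$-domain.

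Fix $p\in\cA$. Using the discreteness in (2), choose $R>0$ with $B_{3R}(p)\cap\cA=\{p\}$, so that $B_{3R}(p)\setminus\{p\}\subseteq\cU$ is locally in $\Alex^n(\kappa)$. Applying BGP globalization inside (open) convex subsets of this punctured ball already gives the $\kappa$-Toponogov comparison for any geodesic triangle whose sides remain in $\cU$. By (2), $T_p(Y)=C(\Sigma)\in\Alex^n(0)$ and hence $\Sigma\in\Alex^{n-1}(1)$, and the rescalings $(B_r(p),r^{-1}d_Y,p)$ converge, as $r\to 0^+$, to the unit ball around the cone tip in $C(\Sigma)$. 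I plan to verify the $\kappa$-comparison on $B_{r_0}(p)$ for sufficiently small $r_0>0$ by checking the hinge/four-point comparison case by case according to whether $p$ appears as a vertex of the configuration.

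For configurations $(q_1,q_2,q_3)\subseteq \cU\cap B_{r_0}(p)$, I combine the punctured-ball BGP comparison above with a semicontinuity/perturbation step: the cone hypothesis forces $\{p\}$ to have codimension $n$ at the infinitesimal level, so the set of triples whose minimizing geodesics happen to pass through $p$ can be approximated by nearby triples whose minimizers stay inside $\cU$, and the $\kappa$-inequality passes to the limit. For configurations where $p$ itself is a vertex, I use the metric-cone structure on $T_p(Y)$: the space of directions at $p$ is well-defined through $\Sigma$, angles between two geodesics issuing from $p$ are realized as angles in $C(\Sigma)$, and the $0$-Toponogov comparison in the Alexandrov cone $C(\Sigma)$, together with a rescaling of the hinge by $r\to 0$, translates into the desired $\kappa$-inequality on $B_{r_0}(p)$ with an error vanishing with $r_0$ (which is enough because the $\kappa$-comparison at a cone tip is stable under such rescalings). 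Putting these pieces together, $B_{r_0}(p)$ is an Alexandrov $\kappa$-domain, and the BGP globalization then upgrades local to global.

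The main obstacle I anticipate is the first subcase just discussed: a priori a minimizing geodesic between two points of $\cU\cap B_{r_0}(p)$ may be forced to pass through the singular point $p$, so the local Alexandrov structure of $\cU$ does not apply directly to such a triangle. The tangent-cone hypothesis is precisely what defuses this issue, because at small scales geodesics through $p$ behave like pairs of antipodal rays in $C(\Sigma)\in\Alex^n(0)$, giving a clean way to compute the angles at $p$ and to perform the perturbation to nearby geodesics entirely inside $\cU$. Once this delicate step is handled, the rest of the argument is a packaging of standard globalization technology.
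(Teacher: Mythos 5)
The high-level structure of your plan matches the paper's: reduce via BGP globalization to producing an Alexandrov $\kappa$-domain around each $p\in\cA$, and do a case analysis on whether $p$ occurs as a vertex or sits on a side of the test triangle. However, the two substantive steps in your plan are not rigorous and do not match the paper's actual mechanism, which reformulates the tangent-cone hypothesis as the elementary \emph{convexity} property at $p$ (that a geodesic through $p$ and any third point yield angles at $p$ summing to $\le\pi$; see Remark~\ref{r:alex.convex}) and then runs a purely combinatorial argument via Alexandrov's Lemma and the ``thin globalization'' Lemma~\ref{l:thin.glob}.

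Concretely, there are two gaps. First, your perturbation step for triples $(q_1,q_2,q_3)\subseteq\cU$ whose geodesics pass through $p$: passing the $\kappa$-comparison to the limit is not automatic, because in Alexandrov-type settings the upper angle between geodesics is only semicontinuous under variation of the endpoints (and perturbed minimizers need not converge to the given one when minimizers are non-unique). The direction of the semicontinuity is exactly wrong for the comparison $\cang{\kappa}{a}{b}{c}\le\ang{a}{b}{c}$, so the limit does not follow by density alone. The paper avoids this entirely: when $p=\omega$ lies on the side $]ab[$, it applies the convexity inequality at $\omega$ directly together with Alexandrov's Lemma (Step~2 of the proof of Theorem~\ref{t:glob.des.convex}), with no limiting argument.

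Second, your vertex case: you claim the $0$-comparison in $C(\Sigma)$ together with a rescaling gives the $\kappa$-comparison on $B_{r_0}(p)$ ``with an error vanishing with $r_0$,'' but a $\kappa$-domain requires the comparison to hold exactly on some ball, and Gromov--Hausdorff convergence of $(B_r(p),r^{-1}d_Y)$ to the cone does not by itself give a finite-scale comparison---the usual bridge is monotonicity of comparison angles, which is precisely what one is trying to establish. The paper's Step~1 instead makes a non-bifurcation observation: when $\omega$ is a vertex, no geodesic $[uv]$ with $u\in\geodii{ab}$ and $v\in\geodii{ac}$ can pass through $\omega$ (otherwise $a,b,c$ would be collinear), so all intermediate geodesics stay in $\cU$ and Lemma~\ref{l:thin.glob} applies directly. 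You would need to either import that lemma and that observation, or supply a genuinely different rigorous substitute for both steps.
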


In general, we have the following conjecture, which implies Conjecture \ref{conj:g.Alex} combining with our results established in Section \ref{s:sep.glue} and Section \ref{s:inv.glue}.
\begin{conjecture}\label{c.glob}
Let $Y$ be an $n$-dimensional length metric space. If the following are satisfied for $\cU\subseteq Y$ and $\cA=Y\setminus\cU$, then $Y\in\Alex^n(\kappa)$.
\begin{enumerate}
\renewcommand{\labelenumi}{(\arabic{enumi})}
\setlength{\itemsep}{1pt}
\item $\cU\in\Alex_{loc}^n(\kappa)$.
\item $\cH^{n-1}(\cA)=0$ and for every point $p\in\cA$, the tangent cone $T_p(Y)$ exists and isometric to a metric cone $C(\Sigma)\in\Alex^{n}(0)$.
\end{enumerate}
\end{conjecture}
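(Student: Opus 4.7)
The plan is to establish the global Toponogov $\kappa$-comparison for arbitrary small triangles in $Y$ by combining a localized globalization on the open set $\cU$ with a continuity argument across the set $\cA$, exploiting the metric-cone structure at each $p \in \cA$ together with the smallness $\cH^{n-1}(\cA) = 0$. The overall architecture parallels the proof of Theorem \ref{t:glob.des}, but since $\cA$ is no longer discrete, one cannot simply shrink neighborhoods to isolate one singular point at a time, so we must argue by genericity of geodesics.

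First I would verify that each $p \in \cA$ behaves like an Alexandrov point for hinges based at $p$. Because $T_p(Y) = C(\Sigma_p)$ with $\Sigma_p \in \Alex^{n-1}(1)$, any two geodesics emanating from $p$ admit a well-defined angle (equal to the distance between their initial directions in $\Sigma_p$), this angle is lower semicontinuous in the endpoints, and the $\kappa$-hinge comparison at the vertex $p$ follows from hinge comparison inside $T_p(Y) \in \Alex^{n}(0)$ via the usual blow-up. A direct corollary is that two short geodesics from $p$ meeting at angle $\pi$ concatenate into a minimizing geodesic through $p$, so $\cA$ does not obstruct the continuation of geodesics.

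Second, I would prove Toponogov comparison for ``good'' triangles, namely those whose three vertices lie in $\cU$ and whose sides meet $\cA$ only in a finite interior set. For such a triangle, Petrunin's short-chain globalization applied inside $\cU$ (covering each side by finitely many local Alexandrov $\kappa$-domains) yields $\kappa$-comparison whenever the sides stay in $\cU$; at each isolated $\cA$-crossing on a side, step one allows us to ``transport'' angle monotonicity across. The existence of such triangles rests on a genericity statement: starting from a point $p \in \cU$, the set of directions $\xi \in \Sigma_p$ whose geodesic $\exp_p(t\xi)$ meets $\cA$ in an uncountable (or positive $\cH^1$) set is $\cH^{n-1}$-null in $\Sigma_p$; this uses the coarea-type comparison between $\cH^{n-1}(\cA)$ and the exponential map combined with the fact that the cone structure at points of $\cA$ prevents geodesics from ``dwelling'' inside $\cA$.

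Finally, for an arbitrary small triangle $\triangle pqr \subset Y$, I would approximate the three vertices by sequences $p_k, q_k, r_k \in \cU$ (using that $\cU$ is dense since $\cA$ has empty interior) with the property that the three connecting geodesics meet $\cA$ in at most a finite set; by the genericity step this is achieved after an arbitrarily small perturbation. Applying the previous step to $\triangle p_k q_k r_k$ and passing to the limit by lower semicontinuity of angles and continuity of side lengths yields the $\kappa$-comparison for $\triangle pqr$. The main obstacle is precisely the genericity/transversality step: $\cH^{n-1}(\cA) = 0$ does not a priori prevent a single geodesic from intersecting $\cA$ in a fat one-dimensional set, and ruling this out requires a careful quantitative use of the hypothesis that each $T_p(Y)$ for $p \in \cA$ is a metric cone in $\Alex^{n}(0)$, controlling how densely $\cA$ can accumulate along a radial direction from a regular base point. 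This is the place where Conjecture \ref{c.glob} genuinely exceeds Theorem \ref{t:glob.des} in difficulty, and I would expect its resolution to occupy the bulk of the argument.
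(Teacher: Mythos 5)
This statement is Conjecture~\ref{c.glob}, which the paper explicitly poses as an \emph{open} problem; there is no proof in the paper for you to be compared against. The authors only establish the special case Theorem~\ref{t:glob.des} where $\cA$ is discrete, and Remark~\ref{r:glob.des.convex} flags precisely that their argument (the Step~1 case analysis that feeds into the angle inequalities used in Step~2) collapses when $\cA$ has positive dimension --- exactly the setting of the conjecture.

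That said, your proposal is a reasonable outline of the shape a proof would have to take, and you correctly isolate where the difficulty lives. But there are two concrete gaps neither of which you close. First, the genericity step is unsubstantiated: for $n\ge 3$, a set with $\cH^{n-1}$-measure zero can contain an entire geodesic segment (a line in $\RR^n$ has $\cH^{n-1}=0$ once $n\ge 3$), so the coarea-type argument you invoke does not by itself prevent a geodesic from dwelling in $\cA$, and one would have to exploit the cone hypothesis at $\cA$-points together with openness of $\cU$ to rule this out --- this is not obvious and you acknowledge as much. Second, your Step~1 asserts $\kappa$-hinge comparison at $p\in\cA$ ``via the usual blow-up'' from $T_p(Y)\in\Alex^n(0)$. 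The blow-up gives only infinitesimal information at $p$; to promote that to finite-scale hinge comparison you need to propagate away from $p$, which is exactly the content of globalization and cannot be presupposed. The paper extracts from the cone hypothesis only the weaker ``convexity'' condition that adjacent angles at $p$ along a geodesic through $p$ sum to at most $\pi$ (Remark~\ref{r:alex.convex}), not full hinge comparison, and then feeds that into Alexandrov's Lemma along one-dimensional chains --- a mechanism that does not extend when the singular set along a geodesic can be uncountable. So your sketch correctly identifies the attack and the obstruction but does not constitute a proof, which is consistent with the statement remaining a conjecture in the paper.
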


\subsection{Convention and notation}
Let $X$ be a length metric space.
\begin{enumerate}
  \renewcommand{\labelenumi}{(\arabic{enumi})}
  \setlength{\itemsep}{1pt}
  \item Let $C(\Sigma)$ denote the metric cone over metric space $\Sigma$.
  \item $\Sigma_{p}(X)$ denoted the space of directions at point $p\in X$. We may omit the $X$ and write it as $\Sigma_p$ is no confusion arises.
  \item $T_{p}(X)=C(\Sigma_p)$ denoted the tangent cone at point $p\in X$, whose cone point in $T_p(X)$ is denoted by $p^*$. We may omit the $X$ and write it as $T_p$ is no confusion arises.
  \item Let $\dim_\cH(A)$ denote the Hausdorff dimension of $A\subseteq X$.
  \item Let $\cH^k(A)$ denote the $k$-dimensional Hausdorff measure of $A\subseteq X$.
  \item We let $\geod{ab}$ or sometimes $\geod{a, b}$ denote a geodesic connecting points $a$ and $b$..
\end{enumerate}



\section{Outline of the Proof}\label{s:outline}

From now on the gluing will always satisfy the assumptions in Conjecture \ref{conj:g.Alex}. That is, the spaces are glued by path isometry along the boundary, and every limit gluing $f_\infty$ on tangent cones produces Alexandrov spaces.

\begin{definition}\label{def:angle}
Let $Y$ be a length metric space and $\gamma\colon[0,1]\to Y$ and $\sigma\colon[0,1]\to Y$ be unit speed geodesics with $\gamma(0)=\sigma(0)=p$. Define the angle between geodesics $\gamma$ and $\sigma$ by $\dsp \ang{p}{\gamma}{\sigma}=\limsup_{t_i, s_i\to 0^+}\cang{\kappa}{p}{\gamma(t_i)}{\sigma(s_i)}$, which is the upper limit of the comparison angles in the space form $\dS_\kappa^2$. It's easy to see by the Taylor expansion of law of cosine that the limit doesn't depend on the choice of $\kappa$.
\end{definition}

\begin{definition}
  A point $p\in Y$ is said to be convex if for any geodesic $\geodii{x,y}\ni p$ and any $z\in Y$ with $z\neq p$, the angles satisfy
  \begin{align}
    \ang{p}{\geod{pz}}{\geod{px}}+\ang{p}{\geod{pz}}{\geod{py}}\le\pi.
  \end{align}
\end{definition}

\begin{remark}\label{r:alex.convex}
  By the definition, if for every sequence of rescaling, tangent cone $T_y(Y)$ at $y\in Y$ exists passing to a subsequence and is isometric to a metric cone $C(\Sigma)$ with $\Sigma\in\Alex^{n-1}(1)$, then $p$ is convex. Such a condition holds if $Y\in\Alexnk$ or Lemma \ref{l:glue.tcone} holds. 
\end{remark}


To prove $Y\in\Alex^n(\kappa)$, we will follow the following framework, which was described in \cite{NLi15a}.
\begin{enumerate}[leftmargin=1in,itemsep=0pt,labelsep=12pt]
  \renewcommand{\labelenumi}{(Step \arabic{enumi})}
  \setlength{\itemsep}{1pt}
  \item Decompose $Y$ as $Y=\cU\cup\cA$ with the following two properties.
    \begin{enumerate}
      \renewcommand{\labelenumi}{(\arabic{enumi})}
      \setlength{\itemsep}{1pt}
      \item $\cU\in\Alex_{loc}^n(\kappa)$.
      \item Every point in $\cA$ is a convex point.
\end{enumerate}
  \item Prove that $Y\in\Alexnk$ by some globalization theorems such as Conjecture \ref{c.glob}.
\end{enumerate}

Now let us explain the constructions of $\cU$ and $\cA$. Let $G_Y=\{y\in Y\colon |f^{-1}(y)|>1\}$ and $G_X=f^{-1}(G_Y)$ be the collection of gluing points. For $m\ge 2$, let
$$
G_Y^m=\left\{y\in f(\partial X)\colon\; \left|f^{-1}(y)\right|=m\right\} \text{\quad and \quad} G_X^m=f^{-1}(G_Y^m).
$$
It follows from Lemma \ref{l:G2-1.dense} that $G_Y=\cup_{m=2}^{N_0}G_Y^m$ for some $N_0=N_0(X)<\infty$. Let $F_Y=\{y\in f(\partial X)\colon |f^{-1}(y)|=1\}$. Then $F_X=f^{-1}(F_Y)$ is the set of points on $\partial X$ that are not glued with any other point. Note that these sets may not be closed or open.

\begin{example}\label{e:non-open.close}
In Example \ref{ex:3ptgluing}, if we glue $\{X_1, X_2, X_3\}$ to $Y$, then $$G^{2}_{X}=\overline{O_1A_1}\cup\overline{O_2A_2}\cup\overline{O_3A_3}\setminus\{O_1, O_2, O_3\}$$ is neither open nor closed. The non-glued points $F_X$ is open.

In the gluing of $Y$ to $Z$, we have $F_Y=\{P,Q\}$ is closed. \hfill $\Box$
\end{example}

The following are the main components in $\cU$.
\begin{enumerate}
  \renewcommand{\labelenumi}{(\arabic{enumi})}
  \setlength{\itemsep}{1pt}
  \item $f(F_X^\circ)$, where $F_X^\circ$ is the interior of $F_X$ in the boundary topology.
  \item $G_Y^2(\epsilon)=\{y\in G_Y^2:f^{-1}(y)\subseteq \cR^{n-1}_\epsilon(X)\}$. Let $G_X^2(\epsilon)=f^{-1}(G_Y^2(\epsilon))$. Note that $G_X^2(\epsilon)\subseteq G_X^2\cap \cR^{n-1}_\epsilon(X)$, but they may not equal, because a point in $G_X^2\cap \cR^{n-1}_\epsilon(X)$ may not be glued with any point in $\cR^{n-1}_\epsilon(X)$. We will classify the gluing of $G_Y^2(\epsilon)$ as local separable. See Lemma \ref{l:loc.Pet}.
  \item $F_Y(\epsilon)=\{y\in F_Y\colon f^{-1}(y)\in \cR^{n-1}_\epsilon(X)\}$. Let $F_X(\epsilon)=f^{-1}(F_Y(\epsilon))$. We will classify the gluing near $F_X(\epsilon)\cap \cR^{n-1}_\epsilon(X)$ as involutional. See Lemma \ref{l:inv.glue}.
\end{enumerate}

%


Let $\cU=f(X^\circ)\cup f(F_X^\circ)\cup F_Y(\epsilon)\cup G_Y^2(\epsilon)$ and $\cA=Y\setminus\cU$. It's obvious that $f(X^\circ)\cup f(F_X^\circ)\in\Alex_{loc}^n(\kappa)$ because $f|_{X^\circ}$ is an isometry in terms of the intrinsic metrics. We will first prove $G_Y^2(\epsilon)\in\Alex_{loc}^n(\kappa)$ in Section \ref{s:sep.glue} using Lemma \ref{l:loc.Pet} and Theorem \ref{t:pet.glu}, which also implies Lemma \ref{l:glue.tcone}. The key result is Lemma \ref{l:loc.Pet}, which classifies the gluing over $G_X^2(\epsilon)$ as local separable, so that we can apply Theorem \ref{t:pet.glu}. 
We will prove $F_Y(\epsilon)\in\Alex_{loc}^n(\kappa)$ in Section \ref{s:inv.glue}, which follows from Theorem \ref{t:Self.Glue} and Lemma \ref{l:inv.glue}. In fact, the latter result classifies the gluing near $F_X^2(\epsilon)$ as local involutional, so that we can apply Theorem \ref{t:Self.Glue}. After all these have been done, the result in Step 1-(a) follows from the definition of $\cU$. The results in Step 1-(b) follows from Lemma \ref{l:glue.tcone}. In particular, Theorem \ref{t:loc_glue} follows from Lemma \ref{l:loc.Pet} and Lemma \ref{l:inv.glue}.

For Step 2, we will prove Theorem \ref{t:glob.des} in Section \ref{s:glob}. In Section \ref{s:pf.main}, we combine the results from the previous sections to prove Lemma \ref{l:glue.disc.st}, which implies Theorem \ref{t:glue.disc} and Theorem \ref{t:glue.dim2}.

We would like to point out that the discreteness of $\partial X\setminus\cR^{n-1}_\epsilon(X)$ is only used for Step 2. The results in Section \ref{s:sep.glue} and Section \ref{s:inv.glue} hold for general $X\in\Alex_\amalg^n(\kappa)$ in all dimensions. 

\section{Separable Gluing}\label{s:sep.glue}

This section is dedicated to the proof of $G_Y^2(\epsilon)\in\Alex_{loc}^n(\kappa)$ and Lemma \ref{l:glue.tcone}. To do this, we will need to prove the key Lemma \ref{l:loc.Pet}, which classifies the local gluing structure of $G_X^2(\epsilon)$. As we pointed out in the introduction, points $p$ gluing with $q$ doesn't imply that the neighborhoods of $p$ and $q$ are glued via a separable local isometry. Lemma \ref{l:loc.Pet} says that the gluing of $G_X^2(\epsilon)$ is in fact locally separable. In particular, we have that $G^{2}_{X}(\epsilon)$ is open.

%

We first prove a gluing lemma on the space of directions.

\begin{lemma}\label{l:glue.spd}
  The following hold if $T_{f^{-1}(y)}(X)$ glues to an Alexandrov space $Z$.
  \begin{enumerate}
  \renewcommand{\labelenumi}{(\arabic{enumi})}
  \setlength{\itemsep}{1pt}
    \item Let $x_1,x_2\in f^{-1}(y)$. If $\xi_1\in\Sigma_{x_1}(X)$ and $\xi_2\in \Sigma_{x_2}(X)$ are glued, then there are $T>0$ and geodesics $\gamma_i\colon[0,T]\to T_{x_i}(X)$ with $\gamma_i(0)=x_i^*$ and $\gamma_i'(0)=\xi_i$, $i=1,2$, so that $\gamma_1(t)$ is glued with $\gamma_2(t)$ for every $0\le t\le T$. Moreover, $T$ continuously depends on $\xi_1$.
    \item $\Sigma_{f^{-1}(y)}(X)$ glues to the connected Alexandrov space $\Sigma_\alpha(Z)\in\Alex^{n-1}(1)$, where $\alpha\in Z$ is glued from cone points in $T_{f^{-1}(y)}(X)$. Moreover, the gluing structure on $\Sigma_{f^{-1}(y)}(X)$ is uniquely determined by the gluing of $T_{f^{-1}(y)}(X)$.
    \item Let $a_i, b_i\in X$ with $f(a_i)=f(b_i)$, $i=1,2,\dots$. If $a_i\to a$, then there is a subsequence $\{b_{i_k}\}\subseteq\{b_i\}$ and $b\in X$ so that $\dsp\lim_{k\to\infty}b_{i_k}= b$.
\end{enumerate}
\end{lemma}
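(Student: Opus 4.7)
The overall strategy is to apply the Lipschitz--Volume Rigidity Theorem \ref{t:NLi15b-1-lip} to the tangent-cone gluing map $f_\infty\colon\amalg T_{x_i}(X)\to Z$, which is 1-Lipschitz, onto, and volume preserving; thus $f_\infty$ is a path isometry along the boundary and an intrinsic isometry on $\amalg T_{x_i}^\circ$. I will use these properties throughout the three parts.

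For (1), set $\tilde\gamma_i:=f_\infty\circ\gamma_i$ where $\gamma_i(t)=t\xi_i$. By path isometry, each $\tilde\gamma_i$ is a unit-speed curve from $\alpha:=f_\infty(x_i^*)$ in $Z$. I would first show that $\tilde\gamma_i$ is a geodesic on some $[0,T_i]$: approximating $\xi_i$ by interior directions $\eta_k\to\xi_i$ in $\Sigma_{x_i}$ produces rays $t\eta_k\subset T_{x_i}^\circ$ whose images $f_\infty(t\eta_k)$ are geodesics of $Z$ via the isometry-on-interiors, and $\tilde\gamma_i$ inherits this on a common interval in the limit. Both $\tilde\gamma_1,\tilde\gamma_2$ end at the common point $\eta=f_\infty(\xi_1)=f_\infty(\xi_2)$. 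Next, I would argue $d_Z(\alpha,\eta)=1$: any chain in $\amalg T_{x_i}$ realizing a shorter connection from a cone point to a preimage of $\eta$ would force strict inequality in $\vol(B_1(\alpha))\geq \sum_i\vol(B_1(x_i^*))$, contradicting the volume preservation of $f_\infty$. So both $\tilde\gamma_i$ are length-one minimizing geodesics $\alpha\to\eta$. Passing to the tangent cone $T_\alpha(Z)$ and invoking non-branching of geodesics in $Z\in\Alex^n(\kappa')$ together with the fact that minimizers from $\alpha$ to a near point are unique for small distance (bootstrapped from interior approximation and the isometry on $T_{x_i}^\circ$), I would conclude $\tilde\gamma_1'(0)=\tilde\gamma_2'(0)$ and hence $\tilde\gamma_1\equiv\tilde\gamma_2$ on $[0,T]$, i.e.\ $\gamma_1(t)$ is glued with $\gamma_2(t)$. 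Continuity of $T(\xi_1)$ follows from upper semi-continuity of angles and standard Alexandrov comparison.

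For (2), part (1) provides that the rule $\xi_1\sim\xi_2$ iff $f_\infty(\xi_1)=f_\infty(\xi_2)$ defines a consistent gluing on $\amalg\Sigma_{x_i}(X)$, and $\hat f\colon\amalg\Sigma_{x_i}\to\Sigma_\alpha(Z)$ given by $\hat f(\xi_i)=\tilde\gamma_i'(0)$ is the induced quotient. I would verify that $\hat f$ is 1-Lipschitz in angular distance (from 1-Lipschitz of $f_\infty$ on short rays and the $\limsup$ definition of $\angle$), onto (every direction at $\alpha$ lifts to a ray in some $T_{x_i}$ by pulling back a geodesic from $\alpha$ along the isometry on interiors), and $\mathcal H^{n-1}$-preserving (coning relates $\mathcal H^{n-1}$ on $\Sigma$ to $\mathcal H^n$ on $C(\Sigma)$, and $f_\infty$ preserves $\mathcal H^n$). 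Applying Theorem \ref{t:NLi15b-1-lip} to $\hat f$ in dimension $n-1$ then gives that $\Sigma_\alpha(Z)\in\Alex^{n-1}(1)$ is glued from $\amalg\Sigma_{x_i}$ along boundaries; its connectedness is inherited from that of $Z$.

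For (3), Lemma \ref{l:G2-1.dense} uniformly bounds $|f^{-1}(y)|$ by some $N_0$, so after passing to a subsequence all $b_i$ lie in a common component $X_{\ell_0}$. Condition (\ref{intro.e1}) together with volume preservation of $f$ prevents the $b_i$ from escaping to a cusp: a divergent sequence in $X_{\ell_0}$ with $f(b_i)\to f(a)$ would concentrate $\mathcal H^n$-mass near $f(a)$ incompatibly with the local density of $Y$ there. Combined with local compactness of $X_{\ell_0}\in\Alex^n(\kappa)$, a pigeonhole over the finitely many preimages of $f(a)$ visible from $X_{\ell_0}$ and a diagonal extraction in $\epsilon\to 0$ yield a subsequence $b_{i_k}\to b$. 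The main obstacle is Part (1): establishing both $d_Z(\alpha,\eta)=1$ and the uniqueness of the minimizer $\alpha\to\eta$. The former requires a careful volume-density analysis to rule out shortcuts through other tangent cones' boundaries, while the latter requires non-branching together with an infinitesimal analysis at $\alpha$.
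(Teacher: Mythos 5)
Your overall strategy differs substantially from the paper's, and the argument for part (1) contains a gap at its most critical step. The paper does not try to show that the curves $\tilde\gamma_i=f_\infty\circ\gamma_i$ are geodesics of $Z$ at all; instead it applies the Lipschitz--Volume Rigidity Theorem a second time, at the level of the tangent cone $T_\alpha(Z)$, to conclude that $\amalg_{f(x)=y}T_{x^*}(T_{f^{-1}(y)}(X))$ glues to $T_\alpha(Z)$, and then uses the fact that $T_\alpha(Z)=C(\Sigma_\alpha)$ is a \emph{metric cone} (because $Z\in\Alex^n(\kappa)$) to read off the ray-gluing property directly. Part (2) then falls out from the cone structure, and part (3) is handled by a careful rescaling by $(d(a,a_i))^{-1/2}$ followed by a volume contradiction using condition~(\ref{intro.e1}).

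The gap in your part (1) is the claim that the images $f_\infty(t\eta_k)$ of interior rays are geodesics of $Z$, and that $\tilde\gamma_i$ inherits this in the limit. Theorem~\ref{t:NLi15b-1-lip} gives that $f_\infty|_{X^\circ}$ is an isometry with respect to \emph{intrinsic} metrics of the interiors; it does not say that the $Z$-metric restricted to $f_\infty(X^\circ)$ coincides with the intrinsic metric of that set, and the gluing can create genuine shortcuts through the boundary (a standard example: self-gluing a flat square along a fold of its own edge makes an interior geodesic near the folded edge non-minimizing in the quotient). So an interior geodesic of $T_{x_i}$ need not map to a $Z$-geodesic. In fact, establishing that the radial curves from $\alpha^*$ are geodesics is essentially equivalent to showing that the gluing preserves the distance to the cone point, which is itself the content of part (1), so the argument is circular without a new input. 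The paper's new input is precisely the cone structure of $T_\alpha(Z)$, which is where the assumption ``$Z$ is Alexandrov'' is cashed in. Your secondary claims --- that $d_Z(\alpha,\eta)=1$ via a volume argument, and that minimizers from $\alpha$ to nearby points are unique --- are also not justified: the volume inequality $\Vol(B_1(\alpha))\ge\sum_i\Vol(B_1(x_i^*))$ is the wrong direction (one only knows $\amalg B_1(x_i^*)\subseteq f_\infty^{-1}(B_1(\alpha))$, which yields $\le$, not $\ge$), and uniqueness of minimizers in an Alexandrov space is not a consequence of non-branching. Finally, for part (3), your sketch (``would concentrate $\mathcal H^n$-mass incompatibly with the local density'') gestures at the right phenomenon but does not constitute a proof; the paper makes this precise via the specific rescaling $(X,f^{-1}(f(a)),(d(a,a_i))^{-1/2}d)$, which forces infinitely many pairwise-disjoint unit balls of definite volume (by (\ref{intro.e1})) into a ball of fixed radius around $f_\infty(a^*)$, giving a clean contradiction.
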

\begin{proof}
  Consider the tangent cone $T_\alpha(Z)$. By \cite{NLi15b}, We have that the disjoint union of tangent cones $\dsp\underset{f(x)=y}\amalg T_{x^*}(T_{f^{-1}(y)}(X))$ glues to the tangent cone $T_\alpha(Z)\in\Alex^n(0)$. Note that $T_\alpha(Z)=C(\Sigma_\alpha)$ is a metric cone. We have the property that if $f(x_1)=f(x_2)$ and $u_1\in T_{x_1^*}(T_{f^{-1}(y)}(X))$ is glued with $u_2\in T_{x_2^*}(T_{f^{-1}(y)}(X))$, then the entire rays passing through $u_1$ and $u_2$ are glued by path isometry. Since the gluing on $\dsp\underset{f(x)=y}\amalg T_{x^*}(T_{f^{-1}(y)}(X))$ is a rescaling limit of the gluing on metric cones $T_{f^{-1}(y)}(X)$. Therefore (1) holds.

  Note that $T_\alpha(Z)=C(\Sigma_\alpha(Z))$ is a connected metric cone. Thus $\dsp\underset{f(x)=y}\amalg\Sigma_{x^*}(T_{f^{-1}(y)}(X))$ glues to the connected Alexandrov space $\Sigma_\alpha(Z)$. Moreover, because $T_{x^*}(T_{f^{-1}(y)}(X))$ is simply the rescaling of $T_{x}(X)$ if $f(x)=y$, by (1), we have that $\Sigma_{x^*}(T_{f^{-1}(y)}(X))$ is isometric to $\Sigma_{x}(X)$ and the gluing of $\dsp\underset{f(x)=y}\amalg\Sigma_{x^*}(T_{f^{-1}(y)}(X))$ is the same as the gluing of $\Sigma_{f^{-1}(y)}(X)$.


  To prove (3), we assume contradictively that $\dsp\inf_{i\neq j}\{d(b_i,b_j)\}>0$. Consider the gluing of tangent cone $T_{f^{-1}(f(a))}(X)$ with respect to the rescalling sequence $(X, f^{-1}(f(a)),(d(a,a_i))^{-\frac12}d)$. On the tangent cone $T_a(X)$ with cone point $a^*$, there are sequences $a_i'\to a^*$ and $b_i'\in T_{f^{-1}(f(a))}(X)$ so that each of $a_i'$ is glued with $b_i'$ and $B_1(b_i')\cap B_1(b_j')=\varnothing$ for every $i\neq j$. Note that $d(f_\infty(a^*),f_\infty(b_i'))= d(f_\infty(a^*),f_\infty(a_i'))\le 1$ and thus $B_2(f_\infty(a^*))\supseteq B_1(f_\infty(b_i'))$ for every $i$. We have
  \begin{align}
    C(n)\ge\Vol(B_2(f_\infty(a^*))\ge \Vol\left(\bigcup_{i=1}^\infty B_1(f_\infty(b_i'))\right)
    =\Vol\left(\bigcup_{i=1}^\infty B_1(b_i')\right)
    = \sum_{i=1}^\infty\Vol(B_1(b_i')).
  \end{align}
  This is a contradiction since (\ref{intro.e1}) holds.
\end{proof}

\begin{remark}
  Lemma \ref{l:glue.spd} (3) may not be true for non-compact Alexandrov spaces if we only assume that the gluing is by path isometry and along the boundary. For example, we glue square $[0,1]\times[0,1]$ with strip $[0,1]\times[0,\infty]$ by equivalence relation $(1,\frac1n)\sim(0,n)$.
\end{remark}




%
%

\begin{lemma}\label{l:vol}
Let $X\in \Alex^{n}(k)$ and $\partial X\ne \varnothing$. For any $\epsilon>0$, there exists $\delta(\epsilon,n)>0$ so that the following holds. If $p\in \cR^{n-1}_\delta(X)$, then there is $r>0$ so that for every $x\in B_r(p)\cap\partial X$, we have $x\in \cR^{n-1}_\epsilon(X)$ and
     $$\Vol (\Sigma_{x}(X)) \ge \frac12\Vol(\dS_1^{n-1})-\epsilon.$$
\end{lemma}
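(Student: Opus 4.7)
The proof plan is to combine two standard facts from Alexandrov geometry: (a) the $(n-1,\epsilon)$-strained condition on $\partial X$ is approximately open in the base point; (b) under non-collapsed Gromov--Hausdorff convergence among $(n-1)$-dimensional Alexandrov spaces of curvature $\ge 1$, Hausdorff measure is continuous. Since the upper hemisphere $\dS_+^{n-1}$ has volume exactly $\tfrac12\Vol(\dS_1^{n-1})$, once we know $\Sigma_x$ is GH-close to $\dS_+^{n-1}$ in a non-collapsing fashion, the claimed inequality will follow.

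First I would translate the tangent-cone condition at $p$ into a BGP-type strainer. If $p\in\cR^{n-1}_\delta(X)$ then, by the definition recalled in the introduction, $B_1(p^*)\subset T_p(X)$ is $\delta$-GH-close to the unit ball of a half-space $[0,\infty)\times\dR^{n-1}$ (the first factor must be one-dimensional since $p$ is a boundary point and $\dim T_p(X)=n$); this is equivalent, after choosing $\delta'=\delta'(\delta,n)$ with $\delta'\to 0$ as $\delta\to 0$, to $p$ admitting an $(n-1,\delta')$-strainer of boundary type in the sense of \cite{BGP}. Because the strainer inequalities are strict and involve only finitely many comparison angles, they are stable under small perturbation of the base point: there is $r=r(p)>0$ so that the same tuple realizes every $x\in B_r(p)\cap\partial X$ as an $(n-1,\epsilon)$-strained boundary point, whence $x\in\cR^{n-1}_\epsilon(X)$. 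This gives the first half of the conclusion.

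Next, for $x\in\cR^{n-1}_\epsilon(X)$ as produced in the previous step, the unit ball of $T_x(X)=C(\Sigma_x)$ is $\epsilon$-GH-close to the unit ball of $[0,\infty)\times\dR^{n-1}$, so passing to the unit sphere the space of directions $\Sigma_x(X)\in\Alex^{n-1}(1)$ is $\eta(\epsilon,n)$-GH-close to the upper hemisphere $\dS_+^{n-1}$, with $\eta(\epsilon,n)\to 0$ as $\epsilon\to 0$. Both spaces being $(n-1)$-dimensional, the convergence is non-collapsing, and the volume-continuity theorem for Alexandrov spaces of fixed dimension and uniform lower curvature bound yields
$$\Vol(\Sigma_x(X))\ge\Vol(\dS_+^{n-1})-\eta(\epsilon,n)=\tfrac12\Vol(\dS_1^{n-1})-\eta(\epsilon,n).$$
Choosing $\delta=\delta(\epsilon,n)$ small enough that $\eta(\epsilon,n)\le\epsilon$ completes the argument.

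The main obstacle is matching scales: the hypothesis $p\in\cR^{n-1}_\delta(X)$ is an infinitesimal, tangent-cone statement at $p$, whereas the conclusion asserts a uniform closeness-to-half-space property on a macroscopic ball $B_r(p)\cap\partial X$. The bridge is the stability of strainers, which converts an infinitesimal near-splitting at $p$ into a near-splitting of controlled quality on a definite ball centered at any nearby boundary point. Once strainer stability is in hand, the volume lower bound is a direct invocation of non-collapsed volume continuity, since collapsing is ruled out by the coincidence of dimensions.
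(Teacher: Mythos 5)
Your proof is correct, but it takes a different route from the one the paper gestures at. The paper's one-line proof invokes Bishop--Gromov volume comparison together with volume continuity, which suggests the following ``volume-first'' argument: the tangent-cone hypothesis at $p$ forces the normalized volume of $B_{s_0}(p)$ (the ratio of $\Vol(B_{s_0}(p))$ to the volume of the comparison ball in the space form) to be close to $\tfrac12$ at some definite scale $s_0>0$; continuity of $x\mapsto\Vol(B_{s_0}(x))$ propagates this to nearby $x$; Bishop--Gromov monotonicity of the ratio then lifts the lower bound to $\Vol(\Sigma_x)/\Vol(\dS_1^{n-1})$ at scale zero; and since $\Vol(\Sigma_x)\le\tfrac12\Vol(\dS_1^{n-1})$ always holds at boundary points, the resulting near-equality plus quantitative volume rigidity delivers $x\in\cR^{n-1}_\epsilon(X)$. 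You run the logic in the opposite direction (``regularity first''): you translate the tangent-cone condition into a boundary strainer, use strainer stability in the style of BGP to propagate regularity to a definite ball, and only then read off the volume bound via non-collapsed volume continuity on the spaces of directions. Both are standard and valid; your version sidesteps Bishop--Gromov monotonicity at the cost of leaning on the quantitative equivalence between the $\cR^{n-1}_\delta$ condition and the existence of a boundary strainer at a definite scale, which is itself a nontrivial translation (essentially what \cite{LiNa19} provides, and often proved via the same monotonicity). One bookkeeping slip at the end: $\eta(\epsilon,n)$ does not depend on $\delta$, so to secure both conclusions with the stated $\epsilon$ you should first pick $\epsilon'<\epsilon$ with $\eta(\epsilon',n)\le\epsilon$, and then choose $\delta$ so that the strainer stability lands every $x\in B_r(p)\cap\partial X$ in $\cR^{n-1}_{\epsilon'}(X)\subseteq\cR^{n-1}_\epsilon(X)$.
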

\begin{proof}
This easily follows from Bishop-Gromov volume comparison and the volume continuity on Alexandrov spaces.
\end{proof}

\begin{lemma}\label{l:G2-1.dense}
  We have $|f^{-1}(y)|\le N_0(X)<\infty$ for every $y\in Y$ and $\cH^{n-2}(B_1\cap G_X\setminus G_X^2(\epsilon))<C(n,\epsilon)$.
\end{lemma}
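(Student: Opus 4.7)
My plan is to handle the two assertions in order: first the uniform cardinality bound on fibers, then the codimension-two Hausdorff estimate.

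For the cardinality bound, the idea is a single volume comparison on the tangent cone. Fix $y\in Y$ and any limit gluing $f_\infty\colon T_{f^{-1}(y)}(X)\to T_y(Y)$. By the tangent-cone hypothesis and the $r_i\to 0^+$ rescaling, $T_y(Y)\in\Alex^n(0)$, so Bishop--Gromov relative to Euclidean space gives $\Vol(B_1(\alpha))\le\omega_n$, where $\alpha$ is the common image of the cone points. Volume-preservation of $f_\infty$ together with the pairwise disjointness of the balls $B_1(x^*)\subseteq T_x(X)$ in $T_{f^{-1}(y)}(X)$ produces
\[
\omega_n \;\ge\; \sum_{x\in f^{-1}(y)}\Vol(B_1(x^*)) \;=\; \sum_{x\in f^{-1}(y)}\tfrac{1}{n}\Vol(\Sigma_x(X)).
\]
The hypothesis (\ref{intro.e1}) combined with Bishop--Gromov monotonicity on $X$ gives a uniform $\Vol(\Sigma_x(X))/n\ge c(X)>0$, whence $|f^{-1}(y)|\le \omega_n/c(X)=:N_0(X)$.

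For the Hausdorff estimate, I would decompose
\[
B_1\cap G_X\setminus G_X^2(\epsilon) \;=\; A_1\cup A_2,
\]
where $A_1=B_1\cap\bigcup_{m\ge 3}G_X^m$ and $A_2=\{x\in B_1\cap G_X^2:\, f^{-1}(f(x))\not\subseteq\cR^{n-1}_\epsilon(X)\}$, then control each piece via the non-strained set $S:=\partial X\setminus\cR^{n-1}_\epsilon(X)$. For $A_2$, any such $x$ is itself in $S$ or glued to a partner in $S$ at bounded distance, so $A_2\subseteq f^{-1}(f(S\cap B_R))$ for a controlled radius $R$; the standard codimension-2 stratification for Alexandrov spaces with boundary gives $\cH^{n-2}(S\cap B_R)\le C_1(n,\epsilon)$, and the cardinality bound just proved then yields $\cH^{n-2}(A_2)\le N_0(X)\cdot C_1(n,\epsilon)$. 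For $A_1$, the plan is to show $A_1\cap\cR^{n-1}_\epsilon(X)=\varnothing$ once $\epsilon<\epsilon_0(n)$ is small enough: if $x_1,\dots,x_m\in f^{-1}(y)\cap\cR^{n-1}_\epsilon(X)$ with $m\ge 3$, each $T_{x_i}(X)$ is $\epsilon$-close in pointed GH to $\RR^n_+$, and the glued tangent cone $T_y(Y)$ is forced within $O(\epsilon)$ of the \emph{$m$-page book} $\RR^{n-1}\times T_m$, where $T_m$ is the $m$-valent star. Since $T_m\notin\Alex^1(1)$, this product is not in $\Alex^n(0)$, and a compactness-and-contradiction argument using the closure of $\Alex^n(0)$ under pointed GH limits rules this configuration out for $\epsilon$ small, giving $A_1\subseteq f^{-1}(f(S\cap B_R))$ and the same bound as for $A_2$.

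The main obstacle I foresee is the quantitative boundary-stratification estimate $\cH^{n-2}(S\cap B_R)\le C_1(n,\epsilon)$: it should follow from the standard Bishop--Gromov covering argument, using that every non-$(n-1,\epsilon)$-strained boundary point carries a uniform volume deficit relative to half-space comparison, but one must adapt the covering carefully to $\partial X$. The remaining continuity step—passing from ``each $T_{x_i}(X)$ close to $\RR^n_+$'' to ``$T_y(Y)$ close to $\RR^{n-1}\times T_m$''—should be routine given that the gluing commutes with GH convergence by the setup of Section \ref{s:outline}.
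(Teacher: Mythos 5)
Your treatment of the cardinality bound is essentially the paper's argument: volume comparison on the glued tangent cone (or, equivalently, on the glued space of directions via Lemma~\ref{l:glue.spd}(2)), together with the uniform lower bound on $\Vol(\Sigma_x)$ coming from~(\ref{intro.e1}). Likewise your treatment of $A_2$ --- showing that each $x\in G_X^2\setminus G_X^2(\epsilon)$ either lies in $S=\partial X\setminus\cR^{n-1}_\epsilon(X)$ or is glued to a point of $S$, then invoking Corollary~1.4 of~\cite{LiNa19} for $\cH^{n-2}(S)$ and the multiplicity bound $N_0$ --- is exactly the paper's case~(i).

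The $A_1$ step is where your argument has a genuine gap. First, a small misstatement: what you need is $A_1\subseteq f^{-1}(f(S))$, not $A_1\cap\cR^{n-1}_\epsilon(X)=\varnothing$; a point of $A_1$ can itself be $(n-1,\epsilon)$-strained as long as one of its gluing partners is not. More importantly, the claim that if all $m\ge 3$ preimages lie in $\cR^{n-1}_\epsilon(X)$ then $T_y(Y)$ is ``forced within $O(\epsilon)$ of the $m$-page book $\RR^{n-1}\times T_m$'' is unjustified and, as stated, false. The limit gluing only identifies the $m$ copies of $\partial T_{x_i}\approx\RR^{n-1}$ along their boundaries by path isometry, and nothing forces all $m$ boundaries to be identified simultaneously: for $n=2$, for instance, one can pairwise-glue the boundary rays of three half-planes so the limit tangent cone is $C(\dS^1_{3\pi})$ rather than the $3$-page book. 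Your compactness-and-contradiction scheme would therefore need to rule out \emph{all} such limit configurations, not just the book. The clean way to do this --- and the route the paper takes --- is to skip the structure claim entirely and argue by volume: Lemma~\ref{l:glue.spd}(2) gives $\Sigma_y\in\Alex^{n-1}(1)$ glued from $\amalg_{f(x)=y}\Sigma_x$ volume-preservingly, so $\Vol(\dS_1^{n-1})\ge\Vol(\Sigma_y)\ge\sum_{f(x)=y}\Vol(\Sigma_x)$; combined with Lemma~\ref{l:vol} this gives $\sum\Vol(\Sigma_{x_i})\ge m\bigl(\tfrac12\Vol(\dS_1^{n-1})-\epsilon'\bigr)>\Vol(\dS_1^{n-1})$ for $m\ge 3$ and $\epsilon$ small, a contradiction. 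This volume estimate is exactly what your compactness argument would reduce to in the limit, so you should use it directly in place of the book claim.
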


\begin{proof}
  The proof is similar to the proof of Theorem 1.1(4) in \cite{NLi15b}. We first prove $|f^{-1}(y)|\le N_0(X)$. By Lemma \ref{l:glue.spd} (2), we have
  \begin{align}
    \Vol(\dS_1^{n-1})
  &\ge\Vol(\Sigma_{y})\ge\sum_{f(x)=y}\Vol(\Sigma_{x}),
  \label{l:Rn-1.dense.e2}
  \end{align}
  which implies the desired result by (\ref{intro.e1}). 

  By Corollary 1.4 in \cite{LiNa19}, we have $\cH^{n-2}(B_1\setminus\cR^{n-1}_\epsilon(X))<C(n,\epsilon)$. Because the gluing is by path isometry, it remains to show that for every $p\in G_X\setminus G_X^2(\epsilon)$ we have $f^{-1}(f(p))\setminus\cR^{n-1}_\epsilon(X)\neq\varnothing$. Note that $G_X\setminus G_X^2(\epsilon)\subseteq G_X^2\setminus G_X^2(\epsilon)\cup(\cup_{m\ge 3}G_X^m)$, we have the following two cases.
  \begin{enumerate}
  \renewcommand{\labelenumi}{(\roman{enumi})}
  \setlength{\itemsep}{1pt}
  \item If $p\in G_X^2\setminus G_X^2(\epsilon)$, then $f^{-1}(f(p))=\{p,q\}$. By the definition of $G_X^2(\epsilon)$, either $p\notin \cR^{n-1}_\epsilon(X)$ or $q\notin \cR^{n-1}_\epsilon(X)$.
  \item If $|f^{-1}(f(p))|\ge 3$, but $f^{-1}(f(p))\subseteq \cR^{n-1}_\epsilon(X)$, by Lemma \ref{l:glue.spd} (2)  and Lemma \ref{l:vol}, we have the following contradictive estimation for $\epsilon>0$ small.
    \begin{align}
    \Vol(\dS_1^{n-1})
  &\ge\Vol(\Sigma_{f(p)})\ge\sum_{q\in f^{-1}(f(p))}\Vol(\Sigma_{q})
  \notag\\
  &\ge \sum_{f(q)=f(p)}\frac25\Vol(\dS_1^{n-1}) \ge \frac65\Vol(\dS_1^{n-1}).
  \label{l:Rn-1.dense.e1}
  \end{align}
\end{enumerate}
\end{proof}

Let $p\in A$. We let $B_{r}^{A}(p)\subseteq A$ denote the metric ball in $A$ centered at $p$ with respect to the intrinsic metric of $A$. In particular, $B_r^{\partial X} (p)$ is the $r$-ball contained in $\partial X$ with respect to the boundary intrinsic metric.

\begin{lemma}\label{l:loc.Pet}
There exists $\epsilon_0(X)>0$ small so that the following holds for any $0<\epsilon<\epsilon_0$. For any $y\in G_Y^2(\epsilon)$ and $f^{-1}(y)=\{p_1,p_2\}\subseteq \cR^{n-1}_\epsilon(X)$, there exists $R>0$ so that $B_{R}(p_1)\cap B_{R}(p_2)=\varnothing$ and the gluing over $B_{R}^{\partial X}(p_1)$ and $B_{R}^{\partial X}(p_2)$ is induced by an intrinsic isometry $\phi\colon B_{R}^{\partial X}(p_1)\to B_{R}^{\partial X}(p_2)$.
\end{lemma}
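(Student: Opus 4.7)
The plan is to use the almost-splitting at the $(n-1,\epsilon)$-regular points $p_1,p_2$ together with the volume-preserving, path-isometric gluing to force the tangent cone gluing to be an almost-reflection identifying $\partial T_{p_1}$ with $\partial T_{p_2}$, and then to transfer this rigidity to a small finite scale by a continuity argument.

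\emph{Tangent cone analysis.} Because $p_i\in\cR^{n-1}_\epsilon(X)\cap \partial X$ and $T_{p_i}(X)$ is scale invariant with a boundary point at the apex, the transverse factor in the almost-splitting $T_{p_i}\sim_\epsilon W_i\times \dR^{n-1}$ must be $W_i=[0,\infty)$, so $T_{p_i}(X)$ is close to the Euclidean half-space and, by Lemma \ref{l:vol}, $\Vol(\Sigma_{p_i})\ge\tfrac12\Vol(\dS^{n-1}_1)-\epsilon'$. Since $f$ is volume preserving and $|f^{-1}(y)|=2$, Lemma \ref{l:glue.spd}(2) yields
\[
\Vol(\dS^{n-1}_1)\ge \Vol(\Sigma_y)=\Vol(\Sigma_{p_1})+\Vol(\Sigma_{p_2})\ge \Vol(\dS^{n-1}_1)-2\epsilon'.
\]
Thus $\Sigma_y\in\Alex^{n-1}(1)$ has almost maximal volume, forcing by volume almost-rigidity that $\Sigma_y$ is Gromov-Hausdorff close to $\dS^{n-1}_1$ and $T_y(Y)$ close to $\dR^n$. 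Any self-gluing of $\partial T_{p_1}$, any $m\ge 3$-fold gluing, or any boundary portion of $T_{p_1}$ not identified with $T_{p_2}$ would leave a volume deficit or an extra boundary stratum in $T_y(Y)$ incompatible with the estimate above. Hence the boundary identification $\partial \Sigma_{p_1}\to\partial \Sigma_{p_2}$ is essentially bijective and isometric; Lemma \ref{l:glue.spd}(1) then promotes it to a radial cone isometry $\Phi_\infty\colon \partial T_{p_1}\to \partial T_{p_2}$.

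\emph{Descent to finite scale.} Since $p_1\ne p_2$ in $X$, the separation $B_R(p_1)\cap B_R(p_2)=\varnothing$ is automatic for any $R<\tfrac12 d_X(p_1,p_2)$. For any sequence $x_i\in \partial X\setminus\{p_1\}$ with $x_i\to p_1$ and $y_i\ne x_i$ glued with $x_i$, Lemma \ref{l:glue.spd}(3) produces a subsequential limit $y_i\to q\in\{p_1,p_2\}$. The case $q=p_1$ is excluded: rescaling by $r_i=\max\{d(x_i,p_1),d(y_i,p_1)\}$ would yield two distinct identified points in $T_{p_1}$ within bounded distance from $p_1^*$, exactly the self-gluing ruled out above. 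So for $R$ small enough every $x\in B_R^{\partial X}(p_1)$ is glued only with points arbitrarily close to $p_2$. Applying Lemma \ref{l:vol} at $x$ (which places $x$ in $\cR^{n-1}_\epsilon$ for $R$ further shrunk) together with the bound $\sum_{f(z)=f(x)}\Vol(\Sigma_z)\le \Vol(\dS^{n-1}_1)$ forces the gluing to be strictly $2$-to-$1$ at such $x$, and we set $\phi(x)\in B_R^{\partial X}(p_2)$ to be the unique partner.

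\emph{Isometry property and main obstacle.} The symmetric argument applied at $p_2$ supplies $\phi^{-1}$, so $\phi$ is a bijection, and Lemma \ref{l:glue.spd}(3) gives bi-continuity. Because the gluing is by path isometry along the boundary, any rectifiable curve $\gamma$ in $B_R^{\partial X}(p_1)$ has the same length as its glued partner $\phi\circ\gamma$ in $B_R^{\partial X}(p_2)$, so $\phi$ preserves path length and hence the intrinsic boundary metric. The hardest part of the whole argument is the descent step: ruling out, at finite scale, the accumulation of bad gluing pairs (including the $q=p_1$ scenario, multi-point gluing, or gluing with points far from $\{p_1,p_2\}$), particularly along points outside $\cR^{n-1}_\epsilon$. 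The lever is the tangent cone hypothesis of Conjecture \ref{conj:g.Alex}: any such finite-scale defect must survive in the rescaling limit as structural damage to $T_{p_1}(X)\cup T_{p_2}(X)\to T_y(Y)$, which is then incompatible either with the Alexandrov property of $T_y(Y)$ or with the near-maximal volume just established for $\Sigma_y$.
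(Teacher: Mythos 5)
The tangent-cone analysis in your first paragraph is essentially in agreement with the paper: volume almost-rigidity forces $\Sigma_y$ close to $\dS^{n-1}_1$, the near-half-sphere volume of each $\Sigma_{p_i}$ rules out $m\ge 3$-fold gluing and self-gluing at the infinitesimal level, and Lemma~\ref{l:glue.spd}(3) gives the separation $B_R(p_1)\cap B_R(p_2)=\varnothing$ and restricts partners of points near $p_1$ to lie near $p_2$. But the descent step has a genuine gap, and it is precisely where the paper's proof does its real work. You write that any finite-scale defect ``must survive in the rescaling limit as structural damage to $T_{p_1}(X)\cup T_{p_2}(X)\to T_y(Y)$.'' That is exactly what need not happen. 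Consider the set $\cC$ of points in $B_{R_2}^{\partial X}(p_1)$ that are \emph{not} glued with any point near $p_2$ (they could be unglued, or self-glued with a nearby point on the same side). Your volume estimate does not force such points to be glued at all: an unglued boundary point $x$ with $\Vol(\Sigma_x)\approx\tfrac12\Vol(\dS^{n-1}_1)$ is perfectly compatible with the bound $\sum_{f(z)=f(x)}\Vol(\Sigma_z)\le\Vol(\dS^{n-1}_1)$. And $\cC$ could be a sequence of ever-thinner annuli or slivers accumulating at $p_1$ whose density at $p_1$ tends to zero, so the rescaled limit $\cC_\infty$ in $T_{p_1}(X)$ is empty and the defect is invisible in any blow-up based at $p_1$. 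Your argument sees nothing at $p_1$ and stops.

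The paper closes exactly this hole with a point-picking argument: it introduces the continuous function $g_r$ measuring the relative size of $\cC$ in $r$-balls, shows $g_r<\tfrac23$ near points of $\partial X\setminus\cC$ (using the almost-cone structure from $\cR^{n-1}_\epsilon$), and then constructs a Cauchy sequence $u_i\notin\cC$ together with witnessing balls $B_{\frac56 r_i}^{\partial X}(w_i)\subseteq\cC$ of \emph{fixed relative size}. In the limit $u=\lim u_i$, the rescalings by $r_i^{-1}$ now carry a definite ball of $\cC$ into the tangent cone $T_u(X)$, producing a direction in $\partial\Sigma_u(X)$ that is not glued with any direction in $\partial\Sigma_v(X)$, where $v$ near $p_2$ is the partner of $u$. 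That contradicts the inductive hypothesis (the lemma applied to the $(n-1)$-dimensional gluing $\Sigma_u\amalg\Sigma_v$) via an open-closed argument showing $\partial\Sigma_u$ is fully glued to $\partial\Sigma_v$. Two things your proposal is therefore missing: the change of base point to a carefully chosen $u$ where the defect survives the blow-up, and the induction on dimension used to conclude at that base point. Without them, the step from the tangent-cone picture to a uniform $R$ and a globally defined $\phi$ on $B_R^{\partial X}(p_1)$ is not justified.
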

\begin{proof}
  By Lemma \ref{l:vol}, we first choose $\epsilon, R_1>0$ so that $x\in \cR^{n-1}_\epsilon(X)$ and $\Vol (\Sigma_{x}(X)) \ge \frac12\Vol(\dS_1^{n-1})-\epsilon$
  for every $x\in B_{10R_1}^{\partial X}(p_1)\cup B_{10R_1}^{\partial X}(p_2)$ and $B_{10R_1}^{\partial X}(p_1)\cap B_{10R_1}^{\partial X}(p_2)=\varnothing$.
  We claim that there is $0<R_2<R_1$ so that for every point $x\in B_{10R_2}^{\partial X}(p_1)$, we have $f^{-1}(f(x))\subseteq B_{R_1}^{\partial X}(p_1)\cup B_{R_1}^{\partial X}(p_2)$. If this is not true, then there are glued sequences $x_i\to p_1$ and $y_i\notin B_{R_1}^{\partial X}(p_1)\cup B_{R_1}^{\partial X}(p_2)$ with $f(x_i)=f(y_i)$, $i=1,2,\dots$. By Lemma \ref{l:glue.spd} (3), passing to a subsequence, we have $y_i\to y\notin B_{R_1}^{\partial X}(p_1)\cup B_{R_1}^{\partial X}(p_2)$. Now we have $f^{-1}(f(p_1))=\{p_1, p_2, y\}$. By Lemma \ref{l:glue.spd} and Lemma \ref{l:vol},
  \begin{align*}
    \Vol(\dS_1^{n-1})
  &\ge\Vol(\Sigma_{f(p_1)})\ge\Vol(\Sigma_{p_1})+\Vol(\Sigma_{p_2})+\Vol(\Sigma_{y})
  \\
  &\ge \Vol(\dS_1^{n-1})-2\epsilon+\Vol(\Sigma_{y}),
  \end{align*}
  where $\epsilon\to 0$ as $\delta\to 0$. This leads to a contradiction if $\delta>0$ is chosen small. By a similar volume argument, we have $|f^{-1}(f(x))|\le 2$ for every $x\in B_{10R_2}^{\partial X}(p_1)\cup B_{10R_2}^{\partial X}(p_2)$.

  We prove the desired result by induction on dimension. The statement is obvious for $n=1$. Assume that it is true for $(n-1)$-dimensional spaces. Let $\cC=\{x\in B_{R_2}^{\partial X}(p_1)\colon f^{-1}(f(x))\subseteq B_{R_1}^{\partial X}(p_1)\}$ be the collection of points in $B_{R_2}^{\partial X}(p_1)$ which are not glued with any point in $B_{R_1}^{\partial X}(p_2)$. We shall show that $\cC=\varnothing$. Suppose that this is not true.

  We first show that $\cC$ is open if $\cC\neq\varnothing$. 
  If $x\in\cC$ is not an interior point of $\cC$, then there exists a sequence $x_i\to x$ with $f^{-1}(f(x_i))=\{x_i, x_i'\}$ where $x_i'\notin B_{R_1}^{\partial X}(p_1)$. Passing to a subsequence and let $\dsp x'=\lim_{i\to\infty}x_i'\notin B_{R_1}^{\partial X}(p_1)$. However, we have $\dsp f(x')=\lim_{i\to\infty}f(x_i')=\lim_{i\to\infty}f(x_i)=f(x)$. This contradicts to the assumption $x\in\cC$.

  We now find contradictions using a point-picking technique. Given $r>0$, define function $g_r\colon \partial X\to\dR$ as
  \begin{align}
    g_r(x)&=\renewcommand{\arraystretch}{1.5}
   \left\{\begin{array}{@{}l@{\quad}l@{}}
    \dsp r^{-1}\cdot\sup_{B_{\rho}^{\partial X}(w)\subseteq B_r^{\partial X}(x)\cap \cC}\{\rho\} & \text{ if there is } B_{\rho}^{\partial X}(w)\subseteq B_r^{\partial X}(x)\cap \cC,
    \\
    0 & \text{otherwise}.
  \end{array}\right.
  \end{align}
  It's clear that $g_r$ is a continuous function in $x$. We claim that if $z\notin\cC$, then there exists $r_0>0$ so that $g_r(z)<\frac23$ for every $0<r<r_0$. Given $\delta>0$ small, by \cite{LiNa19}, there exists $r_0>0$ so that $d_{GH}(B_{s}(z), B_{s}(z^*))<\delta s$ for every $0<s<10r_0$, where $z^*\in T_z(X_1)$ is the cone point. Let $0<r<r_0$ and $\rho=g_r(z)>0$. Then there is $w\in B_r^{\partial X}(z)$ so that $B_\rho^{\partial X}(w)\subseteq B_r^{\partial X}(z)\cap\cC$. The intuition is that if $\frac\rho r> \frac12+100\delta$, then $B_\rho^{\partial X}(w)$ will contain $z\notin\cC$.

  By the almost metric cone structure, there exists $y\in B_{\rho}^{\partial X}(w)\subseteq B_r^{\partial X}(z)$ so that
  \begin{align}
    d(w,y)>(1-10\delta)\rho
    \label{l:loc.Pet.e1}
  \end{align}
  and
  \begin{align}
    d(z,w)+d(w,y)<(1+10\delta)d(z,y)\le (1+10\delta)r.
    \label{l:loc.Pet.e2}
  \end{align}
  Note that $z\notin\cC$. Thus $z\notin B_\rho^{\partial X}(w)$ and $d(z,w)\ge\rho$. Combine this with (\ref{l:loc.Pet.e1}) and (\ref{l:loc.Pet.e2}). We get
  \begin{align}
    (2-10\delta)\rho<(1+10\delta)r.
    \label{l:loc.Pet.e3}
  \end{align}
  The claim is proved by choosing $\delta>0$ small.

  Now because $\cC$ is open and $\cC\neq\varnothing$, we can choose $y_1\in \cC$, $z_1\notin\cC$ and $0<r_1<\frac14 R_2$, so that
  \begin{align}
    g_{r_1}(y_1)=1   \text{ and } g_{r_1}(z_1)<\frac23.
  \end{align}
  Because $g_{r_1}$ is continuous, there exists $x_1\in\geod{y_1,z_1}\subseteq B_{R_2}^{\partial X}(p_1)$ with $g_{r_1}(x_1)=\frac56$. That is, there exists $w_1\in B_{r_1}^{\partial X}(x_1)$ so that $B_{\rho}^{\partial X}(w_1)\subseteq B_{r_1}^{\partial X}(x_1)\cap \cC$, where $\rho=\frac56 r_1$. Moreover, there exists $u_1\notin\cC$ so that $d_{\partial X}(u_1,w_1)=\rho$, otherwise it contradicts to the maximum property of $\rho$. Note that $B_{\rho}^{\partial X}(w_1)\subseteq B_{r_1}^{\partial X}(x_1)$. We have $d_{\partial X}(u_1, x_1)\le r_1$. Thus
  \begin{align}
    B_\rho^{\partial X}(w_1)\subseteq B_{r_1}^{\partial X}(x_1)\cap \cC\subseteq B_{2r_1}^{\partial X}(u_1)\cap \cC.
  \end{align}
  and $d_{\partial X}(u_1,p_1)\le r_1+R_2$.
  This implies $g_{2r_1}(u_1)\ge \frac5{12}$. Similarly as before, there exist $y_2\in B_{2r_1}^{\partial X}(u_1)\cap\cC$, $z_2=u_1\notin\cC$ and $0<r_2<\frac1{4} r_1$, so that
  \begin{align}
    g_{r_2}(y_2)=1   \text{ and } g_{r_2}(z_2)<\frac23.
  \end{align}
  Recursively apply the same argument, we obtain sequences $u_i$, $w_i$ and $r_i$, $i=1,2,\dots$ with
  \begin{enumerate}
  \renewcommand{\labelenumi}{(\roman{enumi})}
  \setlength{\itemsep}{1pt}
  \item $u_{i+1}\in B_{2r_i}(u_i)\setminus\cC$ and $r_{i+1}<\frac1{4}r_i$,
  \item $d_{\partial X}(u_i,w_i)=\frac56 r_i$ and $B_{\frac56 r_i}^{\partial X}(w_i)\subseteq B_{2r_i}^{\partial X}(u_i)\cap\cC$.
\end{enumerate}

  Note that $d_{\partial X}(u_i,p_1)\le \sum r_i+R_2\le \frac43R_2$. Passing to a subsequence, we get $\dsp u=\lim_{i\to\infty}u_i\in B_{2R_2}(p_1)$. It's clear that $u\notin\cC$ because $\cC$ is open. Note that $\dsp d_{\partial X}(u,u_i)\le\sum_{j\ge i} 2r_j<6r_i$ for every $i$. We have $B_{2r_i}^{\partial X}(u_i)\subseteq B_{8r_i}^{\partial X}(u)$. Thus for each $i\ge 1$, the ball
  \begin{align}
    B_{\frac56 r_i}^{\partial X}(w_i)\subseteq B_{8r_i}^{\partial X}(u)\cap\cC.
    \label{l:loc.Pet.e4}
  \end{align}

  Now rescale $(X,u)$ by $r_i^{-1}$ and let $i\to\infty$. Passing to subsequences, we have the following Gromov-Hausdorff convergence:
    \begin{enumerate}
  \renewcommand{\labelenumi}{(\roman{enumi})}
  \setlength{\itemsep}{1pt}
    \setcounter{enumi}{2}
  \item $(X, u, r_i^{-1}d)\to (T_u(X),u^*,d_\infty)$, where $u^*\in T_u=T_u(X)$ is the cone point,
  \item $(B_{8r_i}^{\partial X}(u), r_i^{-1}d)\to (B_8^{\partial T_u}(u^*),d_\infty)$,
  \item $(B_{\frac56r_i}^{\partial X}(w_i), r_i^{-1}d)\to (B_{\frac56}^{\partial T_u}(w),d_\infty)$,
  \item $(\cC, u, r_i^{-1}d)\to (\cC_\infty,u^*,d_\infty)$.
\end{enumerate}
  Because $u_i\notin\cC$, that is, every $u_i$ is glued with another point $v_i\in B_{R_1}^{\partial X}(p_2)$, passing to a subsequence, there is $\dsp v=\lim_{i\to\infty}v_i\in B_{R_1}(p_2)$ which is glued with $u\in B_{R_2}(p_1)$. By Lemma \ref{l:glue.spd}, $\Sigma_u(X)$ is glued with $\Sigma_v(X)$ to a connected Alexandrov space.

  We further show that every point in $\partial\Sigma_u(X)$ is glued with some point in $\partial\Sigma_v(X)$, by the inductive hypothesis. Let $\cD\subseteq \partial\Sigma_u(X)$ be the set of points which are glued with some points in $\partial\Sigma_v(X)$. First, $\cD\neq\varnothing$ by the assumption. By the definition, $\cD$ is closed. Now we show that $\cD$ is also open. Let $\xi\in \partial \Sigma_u(X)$ be glued with $\eta\in \partial \Sigma_v(X)$. Note that $\partial \Sigma_u(X)\amalg \partial \Sigma_v(X)= \cR^{n-1}_\delta(\Sigma_u(X)\amalg \Sigma_v(X))$ for some $\delta=\delta(\epsilon,n)>0$ small. We have $\xi,\eta\in \cR^{n-1}_\delta(\Sigma_u(X)\amalg \Sigma_v(X))$. Since $\Sigma_u(X)\amalg \Sigma_v(X)$ glues to an Alexandrov space, by \cite{NLi15b}, we have that the tangent cones at every pair of glued points in $\Sigma_u(X)\amalg \Sigma_v(X)$ are also glued to an Alexandrov space. Now we can apply the inductive hypothesis to get $R'>0$ so that $B_{R'}(\xi)$ is glued with $B_{R'}(\eta)$ by an isometry between $B_{R'}^{\partial \Sigma_u(X)}(\xi)$ and $B_{R'}^{\partial \Sigma_v(X)}(\eta)$.

  By the definition of $\cC$ and (\ref{l:loc.Pet.e4}), we have that the points in $B_{5/6}^{\partial T_u(X)}(w)$ are not glued with any point in $T_{v}(X)$. By the construction, we have $d(u^*,w)=\frac56$ and $B_{5/6}^{\partial T_u}(w)\subseteq B_{8}^{\partial T_u}(u^*)\cap\cC_\infty$. Because $T_u$ is a metric cone, we have that for any $q\in T_u$, if $d(u^*,q)\le\frac5{12}$ and $\ang{u^*}{w}{q}<\frac\pi6$, then $d(w,q)<\frac56$. That is, geodesic $\geodic{u^*,q}\subseteq B_{5/6}^{\partial T_u}(w)\subseteq B_{8}^{\partial T_u}(u^*)\cap\cC_\infty$. This implies that the direction $\uparrow_{u^*}^w\,\in \Sigma_{u}(X)$ is not glued with any direction in the space of directions $\Sigma_{v}(X)$. This is a contradiction.
\end{proof}

By the same argument, we have the following result. It can be viewed as a rigidity theorem on the gluing of Alexandrov spaces which contains no singular point.

\begin{theorem}\label{t:half.sph.full.glue}
  Let $X\in\Alex_\amalg^n(\kappa)$, $Y\in\Alex^n(\kappa')$ and $f\colon X\to Y$ be a 1-Lipschitz volume preserving map. There exists $\epsilon=\epsilon(n)>0$ so that if $\cR^{n-1}_\epsilon(X)=\partial X$, then one of the following holds.
  \begin{enumerate}
  \renewcommand{\labelenumi}{(\arabic{enumi})}
  \setlength{\itemsep}{1pt}
  \item $X$ is connected and $f$ is an isometry.
  \item $X=X_1\amalg X_2$ has only two components and there is an isometry $\phi\colon\partial X_1\to\partial X_2$ so that $f$ is induced by the gluing $x\sim\phi(x)$ and $Y\in\Alex^n(\kappa)$.
\end{enumerate}
\end{theorem}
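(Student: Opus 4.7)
The plan is to globalize the local gluing structure established in Lemma \ref{l:loc.Pet} by essentially the same point-picking argument, now promoted to a global dichotomy on $\partial X$. First I would combine the hypothesis $\cR^{n-1}_\epsilon(X)=\partial X$ with Lemma \ref{l:vol} to get a uniform lower bound $\Vol(\Sigma_x(X))\ge \frac12\Vol(\dS_1^{n-1})-\tilde\epsilon(n,\epsilon)$ at every $x\in\partial X$. Plugging this into the volume inequality of Lemma \ref{l:glue.spd}(2), $\Vol(\Sigma_y)\ge\sum_{f(x)=y}\Vol(\Sigma_x)$, forces $|f^{-1}(y)|\le 2$ for every $y\in Y$ provided $\epsilon<\epsilon(n)$ is chosen sufficiently small.

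Next, I would show that the set $G_X$ of glued boundary points is both open and closed in $\partial X$. Openness is exactly the content of Lemma \ref{l:loc.Pet}: around any glued pair $(p_1,p_2)$, the boundary balls $B_R^{\partial X}(p_i)$ are identified by an intrinsic isometry, so every nearby boundary point is also glued. Closedness follows from Lemma \ref{l:glue.spd}(3): if $p_i\in G_X$ with $p_i\to p$ and partners $q_i$, a subsequence $q_{i_k}\to q$ with $f(q)=f(p)$, hence $p\in G_X$. Consequently every connected component of $\partial X$ is either entirely glued or entirely unglued, giving the two exhaustive cases.

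In Case (1), $G_X=\varnothing$, so $f$ is injective; combined with $f|_{X^\circ}$ being an intrinsic isometry (Theorem \ref{t:NLi15b-1-lip}) and the 1-Lipschitz property, $f$ extends to an isometry $X\to Y$. Since $Y$ is connected and $f$ is a bijective isometry, $X$ must be connected, yielding conclusion (1). In Case (2), $G_X\ne\varnothing$, and the local intrinsic isometries from Lemma \ref{l:loc.Pet} patch coherently along paths in $\partial X$ (being uniquely determined by their germs) into a globally defined fixed-point-free isometric involution $\phi$ of $G_X$. I would then argue that this $\phi$ together with the volume bound $|f^{-1}(y)|\le 2$ and the strong regularity $\cR^{n-1}_\epsilon(X)=\partial X$ forces $X$ to split as $X=X_1\amalg X_2$ with $\phi:\partial X_1\to\partial X_2$ an isometry, after which $Y\in\Alex^n(\kappa)$ is immediate from Petrunin's Theorem \ref{t:pet.glu}. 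The main obstacle I anticipate is precisely this last step: propagating the boundary-level involution $\phi$ to a clean bipartite decomposition of the entire $X$, ruling out more elaborate multi-component gluing patterns. This is exactly where the regularity assumption plays its essential role, since otherwise the mass at $\Sigma_y$ could be distributed over more than two spaces of directions and no bipartite pairing is forced.
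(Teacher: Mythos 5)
Your first two steps are sound and are indeed what the paper must have in mind behind the phrase \textquotedblleft by the same argument.\textquotedblright\ The uniform bound $\Vol(\Sigma_x)\ge\frac12\Vol(\dS_1^{n-1})-\tilde\epsilon$ from Lemma~\ref{l:vol}, fed into the volume inequality of Lemma~\ref{l:glue.spd}(2), does force $|f^{-1}(y)|\le 2$ (exactly as in~(\ref{l:Rn-1.dense.e1})), and openness of $G_X$ is the content of Lemma~\ref{l:loc.Pet}. One small caveat in the closedness step: Lemma~\ref{l:glue.spd}(3) only gives you a subsequential limit $q$ of the partners with $f(q)=f(p)$, and nothing excludes $q=p$; under the present hypotheses that situation is governed by Lemma~\ref{l:inv.glue}, which produces an involution $\psi$ with $\psi(p)=p$, so $G_X$ need not be closed at fixed points of~$\psi$. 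That, however, is a minor point you could work around.

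The real problem is exactly the one you flag at the end, and it is not merely a technical obstacle: the bipartite splitting $X=X_1\amalg X_2$ simply does not follow, and the theorem as stated is missing a case. Take $X=D^n$, the flat unit ball, so $\cR^{n-1}_\epsilon(X)=\partial X$ for every $\epsilon>0$, and glue $\partial D^n$ to itself by the antipodal map $z\sim -z$. Then $Y=D^n/\!\sim$ is isometric to $S^n_{\mathrm{flat}}/\{\pm 1\}$, where $S^n_{\mathrm{flat}}$ is the doubling of $D^n$ and $\{\pm 1\}$ acts freely and isometrically by $(z,+)\mapsto(-z,-)$; hence $Y\in\Alex^n(0)$ by the standard quotient theorem. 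All hypotheses of Theorem~\ref{t:half.sph.full.glue} hold, yet $X$ is connected and $f$ is not an isometry, so conclusion~(1) fails, and $X$ does not have two components, so conclusion~(2) fails as well. In other words your local involution $\phi$ from Lemma~\ref{l:loc.Pet} globalizes perfectly well to a fixed-point-free isometric involution of $\partial X$ without ever forcing a two-component decomposition; this self-gluing branch must be added to the dichotomy (and handled by Theorem~\ref{t:Self.Glue} rather than Theorem~\ref{t:pet.glu}). Since the paper gives no proof of this statement, you have in effect uncovered a gap in the statement itself rather than missed an argument; your instinct that \textquotedblleft ruling out more elaborate gluing patterns\textquotedblright\ is the crux is exactly right, but the regularity assumption $\cR^{n-1}_\epsilon(X)=\partial X$ does not rule out the self-gluing case.
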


Combine Lemma \ref{l:G2-1.dense} and Lemma \ref{l:loc.Pet}, we have the following result.

\begin{corollary}\label{l:Rn-1.dense}
  $G^{2}_{X}(\epsilon)$ is open and dense in $G_X$.
\end{corollary}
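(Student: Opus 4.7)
The plan is to prove openness and density separately. Openness is an essentially immediate consequence of Lemma \ref{l:loc.Pet}, while density combines the $\cH^{n-2}$ bound from Lemma \ref{l:G2-1.dense} with the fact that $G_X$ is genuinely $(n-1)$-dimensional at every one of its points, which must be extracted from the tangent-cone hypothesis of Conjecture \ref{conj:g.Alex}.

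For openness, I would start from $p \in G_X^2(\epsilon)$ with $f^{-1}(f(p)) = \{p, p'\} \subseteq \cR^{n-1}_\epsilon(X)$. Lemma \ref{l:loc.Pet} furnishes $R > 0$ and an intrinsic isometry $\phi\colon B_R^{\partial X}(p) \to B_R^{\partial X}(p')$ inducing the gluing on these $\partial X$-balls. After further shrinking $R$, Lemma \ref{l:vol} keeps both balls inside $\cR^{n-1}_\epsilon(X)$, and the volume estimate already appearing in the proof of Lemma \ref{l:loc.Pet} ensures $|f^{-1}(f(x))| \le 2$ throughout, hence equals $2$ on these glued balls. Each $x \in B_R^{\partial X}(p)$ then satisfies $f^{-1}(f(x)) = \{x, \phi(x)\} \subseteq \cR^{n-1}_\epsilon(X)$, so $B_R^{\partial X}(p) \subseteq G_X^2(\epsilon)$. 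This gives openness in the subspace topology of $G_X$.

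For density, I would fix $p \in G_X$ and $r > 0$ and try to produce a point of $G_X^2(\epsilon)$ inside $B_r(p)$. Lemma \ref{l:G2-1.dense} bounds $\cH^{n-2}(B_r(p) \cap G_X \setminus G_X^2(\epsilon))$ by a constant, so in particular the complement $G_X \setminus G_X^2(\epsilon)$ is $\cH^{n-1}$-null locally. It therefore suffices to show $\cH^{n-1}(B_r(p) \cap G_X) > 0$. To do this I would invoke the tangent-cone hypothesis: by Lemma \ref{l:glue.tcone}, $T_{f(p)}(Y) = C(\Sigma)$ is a metric cone over a connected $\Sigma \in \Alex^{n-1}(1)$, glued from $\amalg_{f(q)=f(p)} \Sigma_q(X)$ (with at least two components) via the limit map $f_\infty$ of Lemma \ref{l:glue.spd}(2). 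Connectedness of $\Sigma$ together with the boundary/path-isometry character of the gluing forces the gluing set inside $\amalg_q \Sigma_q(X)$ to have positive $\cH^{n-2}$ measure; Lemma \ref{l:glue.spd}(1) then lifts each glued direction to an entire glued ray, producing an $\cH^{n-1}$-positive gluing set inside $T_{f^{-1}(f(p))}(X)$, and a standard rescaling/blow-up argument transfers this to $\cH^{n-1}(B_r(p) \cap G_X) > 0$ for $r$ small.

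The main obstacle is establishing the $\cH^{n-2}$-positivity of the gluing set in $\amalg_q \Sigma_q(X)$. A purely lower-dimensional identification cannot paste two Alexandrov components into a connected $(n-1)$-dimensional Alexandrov space while remaining volume-preserving, so in spirit the assertion follows from Theorem \ref{t:NLi15b-1-lip} applied to $f_\infty$. The cleanest rigorous route is likely induction on $n$: apply the present corollary in dimension $n-1$ to the induced gluing $f_\infty\colon \amalg_q \Sigma_q(X) \to \Sigma$, so that the $(n-2,\epsilon')$-regular $2$-point gluing locus is already open and dense at the directional level, and then lift back via Lemma \ref{l:glue.spd}(1) to produce a full $(n-1)$-dimensional piece of $G_X^2(\epsilon)$ accumulating at $p$.
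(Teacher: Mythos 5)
Your openness argument is correct and matches the paper's intent: Lemma \ref{l:loc.Pet} provides an isometrically glued $\partial X$-ball around each point of $G_X^2(\epsilon)$, and after shrinking via Lemma \ref{l:vol} that whole ball sits in $G_X^2(\epsilon)$. The density step is where the paper is terse (it simply states the corollary as a combination of Lemmas \ref{l:G2-1.dense} and \ref{l:loc.Pet}), and your reduction to showing $\cH^{n-1}(B_r(p)\cap G_X)>0$ is a sensible reading of what must be filled in. However, two points need attention.

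First, there is a circularity: you invoke Lemma \ref{l:glue.tcone} to get the cone structure of $T_{f(p)}(Y)$, but the paper's proof of Lemma \ref{l:glue.tcone} itself uses this very corollary. You should instead appeal directly to Lemma \ref{l:glue.spd}(2), which is established earlier and already gives that $\Sigma_{f^{-1}(f(p))}(X)$ glues to a connected $\Sigma\in\Alex^{n-1}(1)$; that is all the tangent-level input your argument actually uses.

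Second, and more substantively, the final ``standard rescaling/blow-up argument'' is not standard and hides a genuine gap. The gluing set in $T_{f^{-1}(f(p))}(X)$ is defined via the \emph{limit} map $f_\infty$, obtained as a Gromov--Hausdorff limit of the rescaled maps $f_{r_i}$. A pair $(u_1,u_2)$ glued under $f_\infty$ only tells you that $r_i^{-1}d_Y(f(\tilde u_1^i),f(\tilde u_2^i))\to 0$ for approximating sequences; it does \emph{not} produce pairs $(\tilde u_1^i,\tilde u_2^i)\in G_X$ with $f(\tilde u_1^i)=f(\tilde u_2^i)$. In other words, the limit gluing can be strictly larger than any blow-up of $G_X$, so positive $\cH^{n-1}$-measure of the gluing set at the tangent-cone level does not, by itself, transfer to positive $\cH^{n-1}$-measure of $G_X$ near $p$, nor even to points of $G_X^2(\epsilon)$ accumulating at $p$. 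Any correct argument must control the original gluing $f$ near $p$, not merely the limit $f_\infty$; the natural place to look is the point-picking machinery already used in the proof of Lemma \ref{l:loc.Pet} (the argument that the set $\cC$ of unglued points is empty), adapted to show that $G_X\setminus G_X^2(\epsilon)$ cannot contain a relatively open piece of $G_X$. As written, your density argument stops short of this.
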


Combine Theorem \ref{t:pet.glu} and Lemma \ref{l:loc.Pet}, we have the following result.

\begin{corollary}\label{c:sep.glue.Alex}
$G_Y^2(\epsilon)\in\Alex_{loc}^n(\kappa)$.
\end{corollary}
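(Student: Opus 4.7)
The plan is to exhibit, for each $y\in G_Y^2(\epsilon)$, a neighborhood $U\ni y$ in $Y$ which is an Alexandrov $\kappa$-domain. Fix such a $y$ and write $f^{-1}(y)=\{p_1,p_2\}\subseteq \cR^{n-1}_\epsilon(X)$. First I would invoke Lemma \ref{l:loc.Pet} to produce $R>0$ with $B_R(p_1)\cap B_R(p_2)=\varnothing$ and an intrinsic isometry $\phi\colon B_R^{\partial X}(p_1)\to B_R^{\partial X}(p_2)$ inducing the gluing. Then, shrinking $R$ if necessary and using Lemma \ref{l:vol}, I would arrange that every boundary point in $B_R^{\partial X}(p_i)$ still lies in $\cR^{n-1}_\epsilon(X)$.

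Next I would use the volume argument that appears in the proof of Lemma \ref{l:loc.Pet} (combining Lemma \ref{l:glue.spd}(2), Lemma \ref{l:vol}, and Lemma \ref{l:glue.spd}(3)) to shrink $R$ further so that for every $x\in B_R(p_1)\cup B_R(p_2)$ the preimage $f^{-1}(f(x))$ has at most two elements, and whenever a gluing occurs, both preimages lie in $B_R^{\partial X}(p_1)\cup B_R^{\partial X}(p_2)\subseteq\cR^{n-1}_\epsilon(X)$. Setting $U:=f(B_{R/10}(p_1)\cup B_{R/10}(p_2))$, this guarantees that every gluing point $y'\in U\cap G_Y$ belongs to $G_Y^2(\epsilon)$.

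Now I would verify that every $y'\in U$ admits a neighborhood in $Y$ which is an Alexandrov $\kappa$-domain. There are three cases. If $y'\in f(X^\circ)$, this is immediate since $f|_{X^\circ}$ is a local intrinsic isometry onto an open subset of an Alexandrov space. If $y'\in f(\partial X\setminus G_X)$, then $y'$ has a neighborhood isometric to an open set of $X_i$ near a non-glued boundary point, which is again an Alexandrov $\kappa$-domain. The interesting case is $y'\in G_Y$: by the previous paragraph $y'\in G_Y^2(\epsilon)$, so I reapply Lemma \ref{l:loc.Pet} at $y'$ to obtain small disjoint balls in $X$ around $f^{-1}(y')=\{q_1,q_2\}$ whose boundary pieces are identified by a local intrinsic isometry. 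Here I would apply Petrunin's Theorem \ref{t:pet.glu} (in its local form, which is valid because the proof is a local Toponogov-comparison statement and the hypothesis of superable gluing is satisfied on a genuine neighborhood of the matched boundary pieces) to conclude that $y'$ has a neighborhood which is an Alexandrov $\kappa$-domain. Finally, the Globalization Theorem of \cite{BGP}, applied to the length space $U$ which is locally an Alexandrov $\kappa$-domain at every point, gives $U\in\Alex^n(\kappa)$, proving $G_Y^2(\epsilon)\in\Alex^n_{loc}(\kappa)$.

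The main obstacle I expect is the justification that Petrunin's Theorem applies in the local form needed above: the pieces that get glued at $y'$ are open balls inside Alexandrov spaces rather than complete Alexandrov spaces with globally matched boundaries, so one has to appeal to the fact (already used in the introduction's remark after Theorem \ref{t:pet.glu}) that locally superable gluing is enough by combining Petrunin's original argument with the BGP Globalization Theorem. Careful choice of the radius (small enough that the boundary matching $\phi$ covers the entire piece of $\partial X$ visible inside the ball) is what allows this reduction to go through cleanly.
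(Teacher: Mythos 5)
Your plan correctly identifies the paper's two ingredients --- Lemma \ref{l:loc.Pet}, which furnishes the local intrinsic isometry $\phi\colon B_R^{\partial X}(p_1)\to B_R^{\partial X}(p_2)$ realizing the gluing near $f^{-1}(y)$, and a local version of Petrunin's Theorem \ref{t:pet.glu} --- and this is exactly what the paper's one-line argument (``combine Theorem \ref{t:pet.glu} and Lemma \ref{l:loc.Pet}'') has in mind. However, your argument is structured in a roundabout way, and the final step is incorrect as written. Once you have (in your third case) applied Lemma \ref{l:loc.Pet} at a glued point $y'$ and then local Petrunin to obtain an Alexandrov $\kappa$-domain neighborhood of $y'$, you are already done for $y'=y$: the definition of $G_Y^2(\epsilon)\in\Alex_{loc}^n(\kappa)$ asks only for an Alexandrov $\kappa$-domain containing each $y$, not that the larger set $U$ be an Alexandrov space. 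The three-case decomposition and the subsequent globalization step are therefore redundant. More importantly, the final appeal to the BGP Globalization Theorem \ref{t:glob.BGP} to conclude $U\in\Alex^n(\kappa)$ is not valid: that theorem requires $Y$ to be \emph{complete}, and $U=f(B_{R/10}(p_1)\cup B_{R/10}(p_2))$ is an open set, hence not complete. (Globalization over non-complete domains is exactly the delicate issue Section \ref{s:glob} is devoted to; it does not come for free from BGP.) If you prune the three cases and the globalization step and simply apply local Petrunin at $y$ itself --- using the disjointness $B_R(p_1)\cap B_R(p_2)=\varnothing$ and the fact, built into Lemma \ref{l:loc.Pet}, that the gluing restricted to these balls is entirely given by $\phi$, so $f(B_\rho(p_1)\cup B_\rho(p_2))$ is isometric to a small ball in $B_R(p_1)\amalg B_R(p_2)/\phi$ for $\rho$ small --- the argument becomes both correct and faithful to the paper.
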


Now we are able to prove Lemma \ref{l:glue.tcone} using Lemma \ref{l:glue.spd} and Lemma \ref{l:loc.Pet},

\begin{proof}[Proof of Lemma \ref{l:glue.tcone}]
  Given $A\subseteq\Sigma_p(X)$, we let $\text{Ray}_{A}\colon[0,\infty)\to T_p(X)$ denote the collection of unit speed geodesics starting from $\text{Ray}_{\xi}(0)=p^*$ and along directions $\xi\in A$. Let $x_1, x_2\in f^{-1}(y)$. We say that the rays $\text{Ray}_{\xi_1}$ and $\text{Ray}_{\xi_2}$ are glued for $0< t< T$ if $\text{Ray}_{\xi_1}(t)$ is glued with $\text{Ray}_{\xi_2}(t)$ for every $0<t< T$ by a path isometry.

  We first show that if $\xi_1\in \cR^{n-2}_\epsilon(\Sigma_{x_1}(X))$ and $\xi_2\in \cR^{n-2}_\epsilon(\Sigma_{x_2}(X))$ are glued, then the rays $\text{Ray}_{\xi_1}$ and $\text{Ray}_{\xi_2}$ are glued for $0<t<\infty$. By Lemma \ref{l:loc.Pet}, there is $r>0$ so that $B_r(\xi_1)$ is glued with $B_r(\xi_2)$ by an intrinsic isometry $\phi\colon B_r^{\partial\Sigma_{x_1}}(\xi_1)\to B_r^{\partial\Sigma_{x_2}}(\xi_2)$. In the tangent cones $T_{f^{-1}(y)}(X)$, by Lemma \ref{l:glue.spd}, there is $T>0$, so that the every ray $\text{Ray}_{\eta_1}$ with $\eta_1\in B_r^{\partial\Sigma_{x_1}}(\xi_1)$ is glued and only glued with a ray $\text{Ray}_{\eta_2}$ with $\eta_2\in B_r^{\partial\Sigma_{x_2}}(\xi_2)$ for $0<t< T$.

  Now we prove by an open-close argument that the above gluing property remains true for $0<t<\infty$. The statement is obviously true for $0<t\le T$. Thus the gluing is induced by an intrinsic isometry $\psi_1\colon \text{Ray}_{B_r^{\partial\Sigma_{x_1}}(\xi_1)}[0,T]\cap\partial T_{x_1}(X)\to \text{Ray}_{B_r^{\partial\Sigma_{x_2}}(\xi_2)}[0,T]\cap\partial T_{x_2}(X)$. Not losing generality, we can assume $B_r^{\partial\Sigma_{x_i}}(\xi_i)\subseteq \cR^{n-2}_\epsilon(\Sigma_{x_i}(X))$, $i=1,2$. Therefore, if $\eta_i\in B_r(\xi_i)$, then $\text{Ray}_{\eta_i}(T)\in \cR^{n-2}_\epsilon(T_{x_i}(X))$ for the same $\epsilon$ because of the cone structure.
  By Lemma \ref{l:loc.Pet} again, there is $r'>0$ so that the gluing of $B_{r'}(\text{Ray}_{\eta_1}(T))$ and $B_{r'}(\text{Ray}_{\eta_2}(T))$ is induced by an isometry $\psi_2\colon B_{r'}^{\partial T_{x_1}}(\text{Ray}_{\eta_1}(T))\to B_{r'}^{\partial T_{x_2}}(\text{Ray}_{\eta_2}(T))$.
  Note that $\psi_2=\psi_1$, restricted on $B_{r'}^{\partial T_{x_1}}(\text{Ray}_{\eta_1}(T))\cap \text{Ray}_{B_r(\xi_1)}[0,T]$ and $\psi_2$ maps geodesics to geodesics. We have that on the gluing of $B_{r'}(\text{Ray}_{\eta_1}(T))$ and $B_{r'}(\text{Ray}_{\eta_2}(T))$, the rays are glued with rays, as a continuation of the gluing between $\text{Ray}_{B_r(\xi_1)}[0,T]$ and $\text{Ray}_{B_r(\xi_2)}[0,T]$. Because the choice of $r'$ continuously depends on $\eta_1$, there is $r_0>0$ so that every ray directed from $B_r^{\partial\Sigma_{x_1}}(\xi_1)$ is glued and only glued with a ray directed from $B_r^{\partial\Sigma_{x_2}}(\xi_2)$ for $0<t< T+r_0$.

  By Corollary \ref{l:Rn-1.dense}, we have that the gluing on $T_{f^{-1}(y)}(X)$ is the completion of the gluing on $G_X^2(\epsilon)$. Therefore, $T_{f^{-1}(y)}(X)$ glues along the rays, which gives a metric cone $C(\Sigma)=Z$ with $\Sigma\in\Alex^{n-1}(1)$ since $C(\Sigma)\in\Alex^n(0)$.
\end{proof}

\section{Involutional Gluing}\label{s:inv.glue}

This section is dedicated to the proof of $F_Y(\epsilon)\in\Alex_{loc}^n(\kappa)$. Again, let us first classify the gluing structure near the points in $F_Y(\epsilon)$.
\begin{lemma}\label{l:inv.glue}
  There exists $\epsilon=\epsilon(n)>0$ small so that for any $p\in F_X(\epsilon)$, there exists $r>0$ and an isometric involution $\psi\colon B_r^{\partial X}(p)\to B_r^{\partial X}(p)$ with $\psi(p)=p$, so that the gluing over $B_r^{\partial X}(p)$ is induced by the identification $x\sim\phi(x)$.
\end{lemma}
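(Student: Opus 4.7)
The plan is to mirror the architecture of the proof of Lemma \ref{l:loc.Pet}, but adapted to an \emph{unglued} base point $p \in F_X(\epsilon)$. Since $f^{-1}(f(p))=\{p\}$, any putative involution $\psi$ must fix $p$, and nearby gluing pairs $(x,\psi(x))$ must collapse onto $p$ as $x\to p$. First I would apply Lemma \ref{l:vol} to choose $R_1>0$ so that every point of $B_{10R_1}^{\partial X}(p)$ lies in $\cR^{n-1}_{\epsilon'}(X)$ with $\Vol(\Sigma_x(X))\ge \tfrac12\Vol(\dS_1^{n-1})-\epsilon'$ for some $\epsilon'$ below the threshold of Lemma \ref{l:loc.Pet}. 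Using the sequential closedness from Lemma \ref{l:glue.spd}(3) together with $f^{-1}(f(p))=\{p\}$, I would then find $R_2\in(0,R_1)$ so that every partner of a point in $B_{10R_2}^{\partial X}(p)$ remains in $B_{R_1}^{\partial X}(p)$: otherwise, partners $y_i$ of a sequence $x_i\to p$ would converge to some $y\neq p$ with $f(p)=f(y)$, contradicting $p\in F_X$. A volume estimate analogous to \eqref{l:Rn-1.dense.e1} then caps the gluing multiplicity at two.

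Next, on $B_{R_2}^{\partial X}(p)\cap G_X$, each point $x$ has a unique partner, and both sit in $\cR^{n-1}_{\epsilon'}(X)$, so Lemma \ref{l:loc.Pet} furnishes a local intrinsic isometry $\phi_x$ from a boundary ball around $x$ onto a boundary ball around its partner; symmetry of the gluing forces $\phi_{\phi_x(y)}=\phi_x^{-1}$ on the overlap. I would then provisionally define $\psi$ on $B_{R_2}^{\partial X}(p)$ by sending each glued $x$ to its partner, fixing unglued points, and setting $\psi(p)=p$. The localization above guarantees $\psi$ is continuous at $p$, since any $x_i\to p$ must have partners $\psi(x_i)$ that also converge to $p$. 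Away from $p$ the maps $\phi_x$ assemble into a local isometric involution on $B_{R_2}^{\partial X}(p)\setminus\{p\}$.

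To upgrade $\psi$ to a genuine isometric involution on a ball around $p$, I would exploit the tangent cone structure. By the Conjecture \ref{conj:g.Alex} hypothesis, $T_p(X)$ self-glues through $f_\infty$ to an Alexandrov space, which by Lemma \ref{l:glue.tcone} combined with the separable case already established in Corollary \ref{c:sep.glue.Alex} is a metric cone $C(\Sigma)\in\Alex^n(0)$; Lemma \ref{l:glue.spd}(2) descends this to a self-gluing of $\Sigma_p(X)\in\Alex^{n-1}(1)$. Since the cone point $p^*\in T_p(X)$ is itself unglued, inducting on dimension and invoking the current lemma at level $n-1$ provides an isometric involution $\bar\psi\colon\Sigma_p(X)\to\Sigma_p(X)$ realizing this self-gluing; coning off produces a radial isometric involution $\tilde\psi$ of $T_p(X)$ fixing $p^*$.

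The main obstacle is the lifting step: showing that the pairwise isometries $\phi_x$ from the second paragraph actually assemble into a single isometric involution $\psi$ on some $B_r^{\partial X}(p)$ whose infinitesimal behavior at $p$ is $\tilde\psi$. I expect to handle this by a point-picking argument in the spirit of Lemma \ref{l:loc.Pet}: assuming contradictively that no radius works, one opens up the locus where $\phi_x$ is incompatible with $\tilde\psi$, picks a shrinking sequence, blows up, and extracts in the limit a tangent-cone self-gluing that is neither locally separable nor a reflective involution. This would violate the classification of gluings on $(n-1)$-dimensional tangent cones, which by the inductive hypothesis comprises both Lemma \ref{l:loc.Pet} and the present lemma. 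Once coherence is secured, $\psi$ is automatically an isometric involution with $\psi(p)=p$ under which the gluing over $B_r^{\partial X}(p)$ is the identification $x\sim\psi(x)$.
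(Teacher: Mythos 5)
Your first two paragraphs reproduce the paper's argument essentially verbatim: localize the partners by combining $f^{-1}(f(p))=\{p\}$ with the sequential closedness of the gluing relation (the paper uses this directly rather than citing Lemma~\ref{l:glue.spd}(3), but the idea is the same), then cap the multiplicity at two by the volume estimate. This is exactly where the paper's proof ends: once $|f^{-1}(f(x))|\le 2$ for all $x\in B_r^{\partial X}(p)$ with all such points $\epsilon$-regular, the pairing map $\psi(x)=$ (the other preimage, or $x$ itself) is \emph{declared} to be the required isometric involution.

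Your third and fourth paragraphs are superfluous, and the "main obstacle" you flag is resolved more directly than you propose. Once multiplicity $\le 2$ is established on $B_r^{\partial X}(p)$ and every point there is in $\cR^{n-1}_\epsilon$, \emph{every} glued pair $\{x,\psi(x)\}$ with $x\neq p$ automatically lies in $G_X^2(\epsilon)$, so Lemma~\ref{l:loc.Pet} (already proved in Section~\ref{s:sep.glue}) furnishes a neighborhood of $x$ on which the gluing is realized by an intrinsic boundary isometry onto a neighborhood of $\psi(x)$. These local isometries are all restrictions of the single globally defined map $\psi$ (they are determined by the gluing relation, not chosen), and on $F_X\cap B_r^{\partial X}(p)$ we have $\psi=\mathrm{id}$. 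Hence $\psi$ is continuous and a local isometry on an open dense set, which together with $\psi^2=\mathrm{id}$ makes it a global isometric involution; no tangent-cone blow-up, no induction on dimension, and no point-picking are needed. The blow-up machinery you invoke is what \emph{Lemma~\ref{l:loc.Pet}} uses internally; by the time you prove the present lemma you may simply cite it. Moreover, your inductive call to "the current lemma at level $n-1$" applied to the unglued cone point of $\Sigma_p$ is circular as written, since the $(n-1)$-dimensional involutional lemma has not been established independently of the $n$-dimensional separable result within your proposed induction scheme. So the proposal is correct in spirit but spends effort reproving something already available and gestures at an induction whose base/inductive structure would need to be set up more carefully than you indicate.
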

\begin{proof}
  Let $\epsilon>0$, to be determined latter. We will show that there exists $0<r<r_0$ so that $|f^{-1}(f(x))|\le 2$ for every $x\in B_r(p)$. Then the isometric involution can be defined as $\phi\colon x\mapsto y$ for any $x,y\in B_r^{\partial X}(p)$ with $f(x)=f(y)$.

  We first choose $r_0>0$ small so that $B_{r_0}^{\partial X}(p)\subseteq\cR^{n-1}_\epsilon(X)$. We claim that there exists $0<r<r_0$ so that $f^{-1}(f(x))\subseteq B_{r_0}(p)$ for every point $x\in B_r(p)$. If this is not true, then there exists $p_i\to p$ and $q_i\notin B_{r_0}(p)$ so that $f(p_i)=f(q_i)$. Passing to a subsequence, we assume $q_i\to q\notin B_{r_0}(p)$. Recall that $f$ is a 1-Lipschitz onto map. Thus $f(p)=f(q)$ and $f^{-1}(f(p))\supseteq\{p,q\}$. This contradicts to $p\in F_Y$.

Now we show that $|f^{-1}(f(x))|\le 2$ for every $x\in B_r(p)$. Suppose $\{x_1, x_2, x_3\}\subseteq f^{-1}(f(x))$. By the definition of $F_X(\epsilon)$, we have $\Vol (\Sigma_{x_i}(X)) \ge \frac12\Vol(\dS_1^{n-1})-\epsilon'$ for each $1\le i\le 3$, where $\epsilon'=\epsilon'(n,\epsilon)\to 0$ as $\epsilon\to 0$. Because $\Sigma_{f^{-1}(f(x))}$ are glued to an Alexandrov space $\Sigma\in\Alex^{n-1}(1)$ along their boundaries, we have
  \begin{align*}
    \Vol(\dS_1^{n-1})
    &\ge\Vol(\Sigma)=\Vol(\Sigma_{f^{-1}(f(x))})
    \\
    &\ge\Vol(\Sigma_{x_1})+\Vol(\Sigma_{x_2})+\Vol(\Sigma_{x_3})
    \\
    &\ge \frac32\Vol(\dS_1^{n-1})-3\epsilon'.
  \end{align*}
  This leads to a contradiction for $\epsilon>0$ small.
\end{proof}

The main goal for now is to prove Theorem \ref{t:Self.Glue}. Let us begin with the definition of length preserving lifting.

\begin{definition}\label{d:lplp}
Let $U$ and $V$ be two length metric spaces. An onto map $f\colon U\to V$ is said to satisfy the \emph{length preserving lifting property}, if for any unit speed geodesic $\gamma: [0,T]\to V$ and $\hat p\in f^{-1}(\gamma(0))$, there exists a continuous curve $\hat \gamma:[0, T]\to U$ so that the following hold.
\begin{enumerate}
  \renewcommand{\labelenumi}{(\arabic{enumi})}
  \setlength{\itemsep}{2pt}
  \item $\hat \gamma(0)=\hat p$.
  \item $f(\hat \gamma(t))=\gamma(t)$ for every $t\in [0, T]$.
  \item $\cL(\hat\gamma([0,t]))=\cL(\gamma([0,t]))$ for every $t\in (0, T]$.
\end{enumerate}
The curve $\hat\gamma$ is called a length preserving lifting of $\gamma$ at $\hat p$.
\end{definition}

For example, let $M$ be a compact Riemannian manifold, $G$ be a group acting on $M$ isometrically and $M/G$ be the quotient space. Then the projection map $f\colon M\to M/G$ satisfies the length preserving lifting property. It is an easy exercise to prove the following proposition.

\begin{proposition}\label{p:lift}
  Let $f\colon U\to V$ be an onto map with the length preserving lifting property. Then the following hold.
  \begin{enumerate}
  \renewcommand{\labelenumi}{(\arabic{enumi})}
  \setlength{\itemsep}{2pt}
  \item $d_U(\hat p,\hat q)\ge d_V(f(\hat p), f(\hat q))$, for any $\hat p,\hat q\in U$.
  \item Fix $\hat p\in U$ and $p=f(\hat p)\in V$. For any $q\in V$, there exists $\hat q\in U$ so that $f(\hat q)=q$ and $d_U(\hat p,\hat q)= d_V(p, q)$.
\end{enumerate}
\end{proposition}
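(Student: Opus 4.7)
The two parts are essentially independent arguments: (1) is a 1-Lipschitz estimate for $f$, and (2) is the existence of a length-realizing preimage, which falls out of the LPLP directly. I would prove (2) first because it is the piece that genuinely uses the lifting property.

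For (2), assume $V$ is a geodesic space (the case of interest). Fix $\hat p \in U$, let $p = f(\hat p)$, and take $q \in V$. Let $\gamma : [0, T] \to V$ be a unit-speed minimizing geodesic from $p$ to $q$ with $T = d_V(p, q)$. By the LPLP there is a lift $\hat\gamma : [0, T] \to U$ with $\hat\gamma(0) = \hat p$, $f \circ \hat\gamma = \gamma$, and $\cL(\hat\gamma|_{[0, t]}) = t$ for each $t \in (0, T]$. Setting $\hat q = \hat\gamma(T)$, we have $f(\hat q) = q$, and because $\hat\gamma$ is parametrized by arc length in $U$ it is 1-Lipschitz, which gives
\[
d_U(\hat p, \hat q) \le \cL(\hat\gamma) = T = d_V(p, q).
\]
The reverse inequality $d_U(\hat p, \hat q) \ge d_V(p, q)$ is (1) applied to this pair, yielding equality.

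For (1), the claim is that $f$ is 1-Lipschitz. In the setting of this paper, $V$ arises as the quotient of $U$ by an equivalence relation with the induced quotient pseudometric (cf.\ Definition \ref{d:gluing}), and the single-term chain in the definition of this quotient metric immediately gives $d_V(f(\hat p), f(\hat q)) \le d_U(\hat p, \hat q)$. Abstractly, one can also derive it from the LPLP itself by a partition argument: given a curve $\sigma : [0, L] \to U$ from $\hat p$ to $\hat q$ of length close to $d_U(\hat p, \hat q)$, subdivide finely and, on each small piece with endpoints $\hat a, \hat b$, use condition (3) of LPLP applied to a short geodesic in $V$ connecting $f(\hat a)$ and $f(\hat b)$ to obtain the local comparison $d_V(f(\hat a), f(\hat b)) \le d_U(\hat a, \hat b)$. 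Summing over the partition and letting the mesh go to zero yields (1).

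The main subtlety is a mild circularity between (1) and (2): LPLP is a one-sided condition, so it naturally yields an upper bound on $d_U$ between specific preimages (the hard direction in (2)) but does not a priori bound $d_V$ from above in terms of $d_U$ (which is the content of (1)). In the paper's context this circularity is harmless because $f$ is always the projection of a gluing and is therefore 1-Lipschitz by construction, so (1) is immediate and the substantive content of the proposition is just the lift construction of the second paragraph.
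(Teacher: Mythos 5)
Your proof of (2) is correct: lift a minimizing geodesic from $p$ to $q$, note that the lift $\hat\gamma$ is parametrized by arc length and hence $1$-Lipschitz, take $\hat q=\hat\gamma(T)$, and conclude $d_U(\hat p,\hat q)\le T=d_V(p,q)$, with the reverse inequality coming from (1). Your first observation about (1) --- that in the paper's application $\hat f$ is a gluing projection and is therefore $1$-Lipschitz by construction --- is also the relevant point.

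The ``abstract'' derivation of (1) in your second paragraph does not work, and the gap is worth naming precisely. On a short piece with endpoints $\hat a,\hat b$, lifting a geodesic of $V$ from $f(\hat a)$ to $f(\hat b)$ at $\hat a$ produces a curve ending at \emph{some} preimage $\hat b'$ of $f(\hat b)$, which need not equal $\hat b$, and what the length-preserving lifting property yields is $d_U(\hat a,\hat b')\le d_V(f(\hat a),f(\hat b))$ --- the opposite direction, for a different point. There is no ``local comparison'' to sum. In fact the lifting property alone does not imply that $f$ is $1$-Lipschitz: take $U=[0,1]^2$ with the Euclidean metric, $V=[0,2]$, and $f(x,y)=x+y$. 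Every unit-speed geodesic of $V$ lifts, at every preimage, to a unit-speed axis-parallel $L$-shaped curve (go horizontally until you hit the boundary, then vertically), so the length-preserving lifting property holds, yet $d_V\bigl(f(0,0),f(1,1)\bigr)=2>\sqrt{2}=d_U\bigl((0,0),(1,1)\bigr)$, and (2) also fails for this pair since $f^{-1}(2)=\{(1,1)\}$. So the suspicion in your closing paragraph is correct and should be promoted from a caveat to the main point: as stated, the proposition needs ``$f$ is $1$-Lipschitz'' as an explicit hypothesis (or must be read only in the gluing setting where this is automatic), and the partition-argument paragraph should be deleted.
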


The following can be viewed as a generalization of Corollary 4.6 in \cite{BGP}.

\begin{theorem}\label{t:lift.Alex}
Let $U$ and $V$ be length metric spaces and $f\colon U\to V$ be an onto map. If $U$ is an Alexandrov $\kappa$-domain and $f$ satisfies the length preserving lifting property, then $V$ is also an Alexandrov $\kappa$-domain. 
\end{theorem}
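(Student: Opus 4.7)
The plan is to verify the BGP quadruple $(1{+}3)$-comparison in $V$ by lifting every configuration to $U$ via the length preserving lifting property and then invoking the Alexandrov condition upstairs. Fix any four points $p,x_1,x_2,x_3\in V$ together with unit-speed geodesics $\gamma_i\colon[0,L_i]\to V$ from $p$ to $x_i$, where $L_i=d_V(p,x_i)$. Pick any $\hat p\in f^{-1}(p)$ and apply the length preserving lifting property to obtain curves $\hat\gamma_i\colon[0,L_i]\to U$ with $\hat\gamma_i(0)=\hat p$, $f\circ\hat\gamma_i=\gamma_i$, and $\cL(\hat\gamma_i)=L_i$. Set $\hat x_i=\hat\gamma_i(L_i)$. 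By Proposition \ref{p:lift}(1), $d_U(\hat p,\hat x_i)\ge d_V(p,x_i)=L_i$, while the curve bound $d_U(\hat p,\hat x_i)\le\cL(\hat\gamma_i)=L_i$ forces equality. Hence each $\hat\gamma_i$ is a genuine unit-speed geodesic in $U$ of length $L_i$, and Proposition \ref{p:lift}(1) applied to the endpoint pairs further yields $d_U(\hat x_i,\hat x_j)\ge d_V(x_i,x_j)$ for $i\ne j$.

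Since $U$ is an Alexandrov $\kappa$-domain, the BGP quadruple comparison at $\hat p$ applied to $\hat p,\hat x_1,\hat x_2,\hat x_3$ along the realized geodesics $\hat\gamma_i$ gives
\[
\tilde\angle_\kappa(\hat x_1\,\hat p\,\hat x_2)+\tilde\angle_\kappa(\hat x_2\,\hat p\,\hat x_3)+\tilde\angle_\kappa(\hat x_3\,\hat p\,\hat x_1)\le 2\pi.
\]
For fixed lengths of the two sides adjacent to the vertex, the model comparison angle $\tilde\angle_\kappa$ is monotone nondecreasing in the length of the opposite side; combining this with $d_U(\hat p,\hat x_i)=d_V(p,x_i)$ and $d_U(\hat x_i,\hat x_j)\ge d_V(x_i,x_j)$ yields $\tilde\angle_\kappa(\hat x_i\,\hat p\,\hat x_j)\ge\tilde\angle_\kappa(x_i\,p\,x_j)$ for each pair. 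Summing produces the quadruple inequality at $p$ in $V$. Since $p$ and the configuration $(x_1,x_2,x_3)$, together with the choice of geodesics from $p$, were arbitrary, this is equivalent to the $\kappa$-Toponogov comparison and shows that $V$ is an Alexandrov $\kappa$-domain.

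The main obstacle is the delicate interplay between two properties of $f$: length preservation, which forces each lift $\hat\gamma_i$ to realize $d_U(\hat p,\hat x_i)$ so that the lifts are genuine geodesics on which the BGP condition may be invoked, and the one-sided bound $d_U\ge d_V$ from Proposition \ref{p:lift}(1), which forces the opposite sides to possibly \emph{elongate}, the direction favorable for transferring a lower curvature bound. Both facts together let us replace a quadruple in $V$ by one in $U$ with the same adjacent sides and at least as long opposite sides, which is exactly what angle monotonicity needs. For $\kappa>0$ a mild size restriction on the configuration (all pairwise distances $<\pi/\sqrt{\kappa}$) is needed to ensure comparison triangles exist in $M_\kappa^2$; this is automatic from the local formulation of the $\kappa$-domain condition and so no further care is required.
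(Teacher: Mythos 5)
Your proof is correct and follows essentially the same route as the paper: pick $\hat p\in f^{-1}(p)$, lift the three points so that the adjacent sides are preserved while the opposite sides can only lengthen, use monotonicity of model comparison angles, and invoke the quadruple inequality at $\hat p$ in $U$. The paper simply cites Proposition \ref{p:lift}(2) to obtain the lifted points $\hat q_i$ with $d_U(\hat p,\hat q_i)=d_V(p,q_i)$, whereas you re-derive that fact inline by lifting the geodesics $\gamma_i$ and squeezing $d_U(\hat p,\hat x_i)$ between $d_V(p,x_i)$ and $\cL(\hat\gamma_i)$ — a detour, but a harmless and self-contained one. (Your observation that each $\hat\gamma_i$ is itself a geodesic is true but not actually used: the quadruple comparison in $U$ is a pure distance inequality and does not reference a choice of geodesics.)
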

\begin{proof}
Given $p\in V$, let $\hat p\in U$ so that $f(\hat p)=p$. By Proposition \ref{p:lift} (2), for any three points $q_{i}\in V$, $i=1,2,3$, there exist lifted points $\hat q_{i}\in U$ with $f(\hat q_{i})=q_{i}$ and $d_U(\hat p,\hat q_i)=d_V(p, q_i)$. By Proposition \ref{p:lift} (1), we have $d_V(q_i, q_j)\le d_U(\hat q_i, \hat q_j)$. Thus $\cang{\kappa}{p}{q_i}{q_j}\le \cang{\kappa}{\hat p}{\hat q_i}{\hat q_j}$. Now we have
\begin{align}
  \sum_{1\le i< j\le 3}\cang{\kappa}{p}{q_i}{q_j}\le \sum_{1\le i< j\le 3} \cang{\kappa}{\hat p}{\hat q_i}{\hat q_j}\le 2\pi.
\end{align}
The last inequality follows from that $U$ is an Alexandrov $\kappa$-domain.
\end{proof}

\begin{proof}[Proof of Theorem \ref{t:Self.Glue}]
Let $U_1=U_2=B_{R/10}(x)$ be two copies of $B_{R/10}(x)$ and identify $\phi$ as an isometry between $U_1\cap\partial X$ and $U_2\cap\partial X$. Let $\widehat U=U_1\amalg U_2/(x\sim\phi(x))$ be the gluing space. By Theorem \ref{t:pet.glu}, we have that $\widehat U$ is an Alexandrov $\kappa$-domain.

Let $\pi\colon U_1\amalg U_2\to\widehat U$ be the projection map induced by the gluing $\phi\colon U_1\cap\partial X \to U_2\cap\partial X$. See Figure \ref{fig:involution}. Let $V=f(B_{R/10}(x))$ and $g\colon U_1\amalg U_2\to V$ be defined as $g|_{U_i}=f_i\colon U_i\to V$, $i=1,2$, where $f_i\equiv f|_{B_{R/10}(x)}$. We have the following properties for map $\hat f=g\circ\pi^{-1}\colon \widehat U\to V$.
  \begin{enumerate}
  \renewcommand{\labelenumi}{(\roman{enumi})}
  \setlength{\itemsep}{2pt}
  \item $\hat f$ is well-defined. This is because if $\pi^{-1}(\hat p)=\{x_1, x_2\}$, then $\phi(x_1)=x_2$ and thus $f(x_1)=f(x_2)$.
  \item $\hat f$ is a length preserving onto. This follows from the definition of $\hat f$ and both $\pi$ and $g$ are length preserving onto.
\end{enumerate}

\begin{figure}
\begin{tikzpicture}[scale=1]
\tikzstyle{every node}=[font=\Small] 
\draw [fill=SkyBlue!90](2, -3)--(-2, -3)arc(180:360:2);
\draw (2, 0)--(-2, 0)arc(180:0:2);
\draw (-1,0.8)..controls(-0.9,0.5)..(-0.8,0);
\draw (-0.8, -3)..controls(-0.6,-4)..(-0.2,-3);
\draw (-0.2, 0)..controls(0.2,1.2)..(1,0);
\draw (1,-3)..controls(1.15,-3.5)..(1.2, -4);
\filldraw[black](-1, 0.8)circle (1pt)[left]node{$p_{1}$};
\filldraw[black](-0.8,0)circle (1pt)[below,xshift=-8]node{$\sigma_0(t_1)$};
\filldraw[black](-0.8,-3)circle (1pt)[above,xshift=-8]node{$\sigma_1(t_1)$};
\filldraw[black](-0.2,-3)circle (1pt)[above,xshift=5]node{$\sigma_1(t_2)$};
\filldraw[black](-0.2,0)circle (1pt)[below,xshift=5]node{$\sigma_2(t_2)$};
\filldraw[black](1,0)circle (1pt)[below,xshift=5]node{$\sigma_2(t_3)$};
\filldraw[black](1,-3)circle (1pt)[above,xshift=5]node{$\sigma_3(t_3)$};

\draw [<->] (0, -1.9) -- (0,-1.1);
\node at (0.2, -1.5)[right]{$\phi$};
\node at(2.5,-2)[above]{$U_{1}\amalg U_{2}$};
\node at(-3,1){$U_{1}$};
\node at(-3,-4){$U_{2}$};

\draw [fill=SkyBlue!90](10, 0)--(6, 0)arc(180:360:2);
\draw (6,0)--(10,0)arc(0:180:2);
\draw [blue](10, 0)--(6, 0);
\draw (7, 0.8)..controls(7.1,0.5)..(7.2, 0)..controls(7.4, -1)..(7.8, 0)..controls(8.2, 1.2)..(9,0)..controls(9.15,-0.5)..(9.2,-1); 
\filldraw[black](7, 0.8)circle (1pt)[left]node{$\hat p$};
\filldraw[black](7.2, 0)circle (1pt)[below,xshift=-10]node{$\hat{\gamma}(t_{1})$};
\filldraw[black](7.8, 0)circle (1pt)[below,xshift=9]node{$\hat{\gamma}(t_{2})$};
\filldraw[black](9, 0)circle (1pt)[above,xshift=8]node{$\hat{\gamma}(t_{3})$};
\draw [->] (3.5, 0) -- (5,0);
\node at(4.2,0.2)[above]{$\pi$};
\node at(10.2, 0)[right, yshift=1]{$\widehat U$};
\draw [->] (8, -3) -- (8,-5); 
\node at(8.6, -4){$\hat{f}$};
\draw [->] (3.4, -2.4) -- (7,-5);
\node at(4.6, -3.8)[below]{$g$};
\draw (8, -8)--(8-1.51, -6.01);
\draw (8, -8)--(8+1.51, -6.01);
\draw[blue] (8,-8)--(8-0.5, -6.328);
\fill[left color=RoyalBlue!70, right color=SkyBlue!90, shading=axis] (8, -6) circle(1.5 and 0.35);
\draw(8, -6) circle(1.5 and 0.35);
\node at(10.2, -7){$V$};

\draw (8.2, -6.5)..controls(8,-6.5)..(7.6, -6.7)..controls(7.2,-7)..(7.85,-7.45)..controls(8,-7.5) and (8.3, -7.55)..(8.3,-7.6);
\filldraw[black](8.2, -6.5)circle (1pt)[below, xshift=-3]node{$p$};
\draw[dotted] (8.3,-7.6)..controls(8.35,-7.7) and (7.65,-7.7)..(7.62,-7.5);
\draw (7.62,-7.5)..controls (7.7, -7) and (8,-6.9)..(8.7,-6.5);
\end{tikzpicture}

\caption{Involution}\label{fig:involution}
\end{figure}
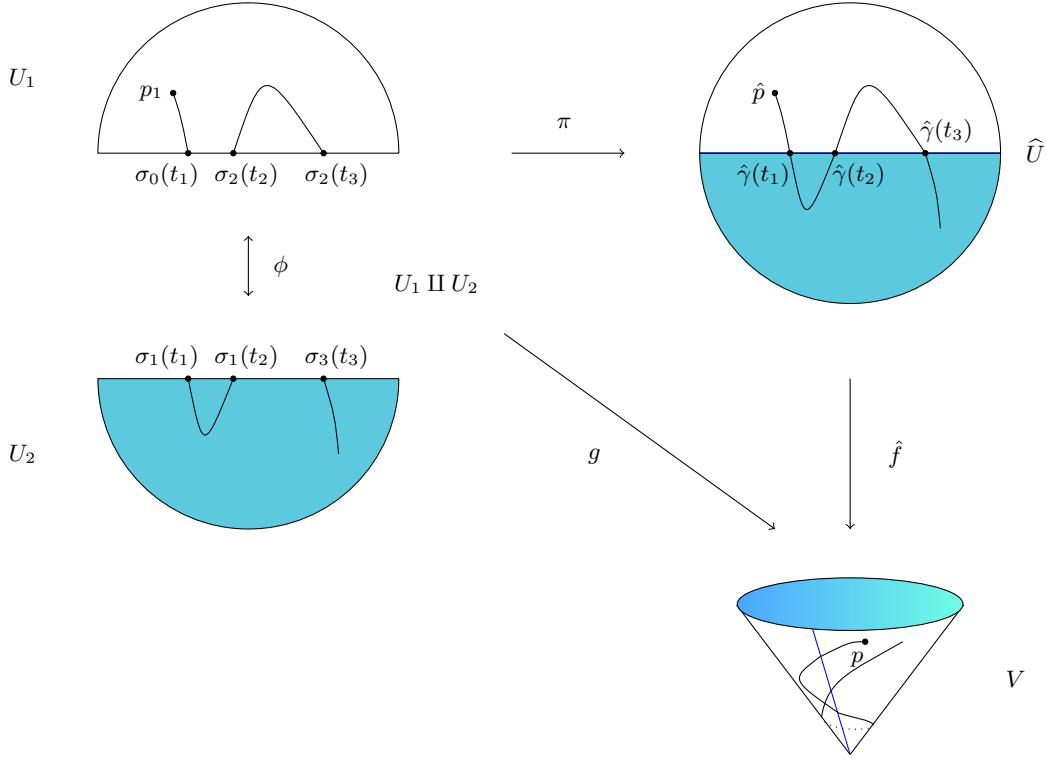
Now we show that $\hat f$ satisfies the length preserving lifting property. Then the desired result follows from Theorem \ref{t:lift.Alex}. Let us first show that the length preserving lifting property holds for any curve $\gamma\colon[0,T]\to V$ with a partition $0=t_0<t_1<t_2<\dots<t_N=T$ for which $\gamma|_{(t_j, t_{j+1})}\subseteq f(U^\circ)$ and $\gamma(t_j)\in f(\partial U)$ for every $0\le j\le N-1$. Let $\hat p\in \hat f^{-1}(\gamma(0))$. We define a sequence of lifting $\sigma_j\colon[t_j,t_{j+1}]\to U_1\amalg U_2$ as follows. Not losing generality, let $p_1\in U_1$ so that $\pi(p_1)= \hat p$.
\begin{enumerate}
  \renewcommand{\labelenumi}{(\roman{enumi})}
  \setlength{\itemsep}{2pt}
  \item Let $\sigma_0(t)=f_1^{-1}(\gamma(t))$ for $t\in (t_0,t_1)$
  \item Suppose that $\sigma_{j-1}(t)=f_i^{-1}(\gamma(t))$ for $t\in (t_{j-1},t_j)$ has been defined, where $i\in\{1,2\}$. Let $\sigma_j(t)=f_{i'}^{-1}(\gamma(t))$ for $t\in (t_j,t_{j+1})$, where $i'\in\{1,2\}\setminus\{i\}$.
  \item Let $\dsp\sigma_j(t_j)=\lim_{t\to t_j^+}\sigma(t)$ and $\dsp\sigma_j(t_{j+1})=\lim_{t\to t_{j+1}^-}\sigma(t)$. By (ii) and $g(\sigma_{j-1}(t_j))=g(\sigma_{j}(t_j))=\gamma(t_j)\in f(\partial U)$, we have $\phi(\sigma_{j-1}(t_j))=\sigma_{j}(t_j)$.
\end{enumerate}
Now $\hat\gamma\colon[0,T]\to \widehat U$, defined by $\hat\gamma|_{[t_j, t_{j+1}]}=\pi\circ\sigma_j$ is a continuous curve. This is because if $\gamma(t_j)\in f(\partial U)$, then $g^{-1}(\gamma(t_j))=\{\sigma_{j-1}(t_{j}), \sigma_{j}(t_{j})\}$, where $\phi(\sigma_{j-1}(t_j))=\sigma_{j}(t_j)$ and $\pi(\sigma_{j-1}(t_j))=\pi(\sigma_{j}(t_j))$.

We claim that $\hat\gamma$ is the desired lifting of $\gamma$ at $\hat p$. Properties (1) and (2) in Definition \ref{d:lplp} are obvious by the definition of $\hat\gamma$. Property (3) follows from that every $\sigma_j$ is a length preserving lifting and $\pi$ is length preserving.

It remains to show that the length preserving lifting property holds for any unit speed geodesic $\gamma\colon[0,T]\to V$. By Definition \ref{d:gluing}, $\gamma$ can be approximated by a sequence of curves $\gamma_k$ that satisfies the following.
\begin{enumerate}
  \renewcommand{\labelenumi}{(\roman{enumi})}
  \setlength{\itemsep}{2pt}
  \item $\dsp\lim_{k\to\infty}\gamma_k=\gamma$.
  \item $\dsp\lim_{k\to\infty}\cL(\gamma_k)=\cL(\gamma)$.
  \item Each of $\gamma_k$ is the image of a finite union of geodesics in $X$.
\end{enumerate}
Note that each of $\gamma_k$ has a length preserving lifting $\hat\gamma_k$ at $\hat p\in \hat f^{-1}(\gamma(0))$. Thus we have $\cL(\gamma_k)=\cL(\hat\gamma_k)$. Passing to a subsequence, let $\dsp\hat\gamma=\lim_{k\to\infty}\hat\gamma_k$. It's clear that $\hat f(\hat\gamma)=\gamma$ and thus $\cL(\hat\gamma)= \cL(\gamma)$, because $\hat f$ is length preserving. Then we have
\begin{align}
  \cL(\gamma)
  &=\lim_{k\to\infty}\cL(\gamma_k)
  =\lim_{k\to\infty}\cL(\hat\gamma_k)
  \ge \cL(\hat\gamma)
  = \cL(\gamma).
\end{align}
Therefore, $\cL(\hat\gamma)=\cL(\gamma)$ and $\hat\gamma$ is a length preserving lifting of $\gamma$ at $\hat p$.
\end{proof}

By Lemma \ref{l:inv.glue} and Theorem \ref{t:Self.Glue}, we have the following main result for this section.

\begin{corollary}\label{c:inv.glue.Alex}
 $F_Y(\epsilon)\in\Alex_{loc}^n(\kappa)$.
\end{corollary}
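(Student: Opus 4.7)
The plan is to combine Lemma \ref{l:inv.glue}, which identifies the gluing as locally involutional at every $y \in F_Y(\epsilon)$, with the involutional gluing result in Theorem \ref{t:Self.Glue}. Fix $y \in F_Y(\epsilon)$; by definition $f^{-1}(y) = \{p\}$ consists of a single point, and $p \in F_X(\epsilon) \subseteq \cR^{n-1}_\epsilon(X)$. First I would invoke Lemma \ref{l:inv.glue} to obtain $r > 0$ and an isometric involution $\psi \colon B_r^{\partial X}(p) \to B_r^{\partial X}(p)$ fixing $p$, for which the gluing induced by $f$ over $B_r^{\partial X}(p)$ is exactly the identification $x \sim \psi(x)$. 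In particular, no point of $B_r^{\partial X}(p)$ is identified by $f$ with a point outside $B_r^{\partial X}(p)$.

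Next I would produce an ambient ball in $X$ to which Theorem \ref{t:Self.Glue} applies. Since $p$ lies in one component $X_i \in \Alex^n(\kappa)$, for any sufficiently small $R > 0$ the ball $B_R(p) \subseteq X_i$ is an Alexandrov $\kappa$-domain. Using $p \in \cR^{n-1}_\epsilon(X)$, the tangent cone $T_p(X)$ is $\epsilon$-close to a half-space $\RR^{n-1} \times [0,\infty)$, so on small scales the ambient and $\partial X$-intrinsic metrics on $\partial X$ near $p$ agree up to an $O(\epsilon)$ error. Shrinking $R$ further if needed, I can thus arrange $B_R(p) \cap \partial X \subseteq B_r^{\partial X}(p)$ and, using that $\psi$ is a $\partial X$-intrinsic isometry fixing $p$, that $B_R(p) \cap \partial X$ is $\psi$-invariant. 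Setting $\phi := \psi|_{B_R(p) \cap \partial X}$, Theorem \ref{t:Self.Glue} applied to $(B_R(p), \phi)$ then yields that $B_{R/10}(\bar{p})$ is an Alexandrov $\kappa$-domain in the quotient $\widehat{U} := B_R(p)/(x \sim \phi(x))$, where $\bar{p}$ is the class of $p$.

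To finish, I would identify $\widehat{U}$ locally with a neighborhood of $y$ in $Y$. Because the only nontrivial $f$-identifications inside $B_R(p)$ are the $\phi$-identifications (by Lemma \ref{l:inv.glue} together with the inclusion $B_R(p) \cap \partial X \subseteq B_r^{\partial X}(p)$), and none of these leave $B_R(p)$, for every sufficiently small $s > 0$ the ball $B_s(y) \subseteq Y$ coincides with the ball $B_s(\bar{p}) \subseteq \widehat{U}$ as a length metric space. Taking $s \le R/10$ then exhibits the required Alexandrov $\kappa$-domain neighborhood of $y$ in $Y$, giving $F_Y(\epsilon) \in \Alex_{loc}^n(\kappa)$. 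The main technical hurdle is the middle step: arranging $B_R(p) \cap \partial X$ to be simultaneously contained in the domain of $\psi$ and $\psi$-invariant, which relies essentially on the $(n-1,\epsilon)$-regularity of $p$ via the near-Euclidean local boundary picture. The final identification of quotient and ambient metrics is routine once the localization of the gluing has been secured.
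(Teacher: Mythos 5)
Your decomposition is exactly the paper's: the paper simply cites Lemma \ref{l:inv.glue} together with Theorem \ref{t:Self.Glue}, and you have reconstructed the intended combination (localize the gluing via the involution, apply the involutional gluing theorem, identify the quotient with a small ball about $y$ in $Y$). However, the claim in your middle step --- that after shrinking $R$ the set $B_R(p)\cap\partial X$ can be made $\psi$-invariant --- does not follow from the stated reasons, and this is the one place the argument has a gap. The involution $\psi$ is an isometry for the intrinsic boundary metric $d_{\partial X}$ fixing $p$, so it preserves the intrinsic balls $B_s^{\partial X}(p)$; but $B_R(p)\cap\partial X$ is cut out by the ambient metric $d$. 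At an $(n-1,\epsilon)$-regular point these two metrics agree only up to a multiplicative error $1+O(\epsilon)$, so all one gets is that $\psi$ maps $B_R(p)\cap\partial X$ into $B_{(1+O(\epsilon))R}(p)\cap\partial X$, not into itself. Shrinking $R$ rescales both sides of this inclusion and does not remove the discrepancy; the set of radii for which the extrinsic boundary slice is exactly $\psi$-invariant may well be empty.

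The fix is mild but should be stated. Replace the extrinsic boundary slice by an intrinsic, hence $\psi$-invariant, one: pick $s<r$ small and set $D=B_s^{\partial X}(p)$, so $\psi(D)=D$, then choose $R$ with both $B_R(p)\cap\partial X\subseteq D$ and $\psi(B_R(p)\cap\partial X)\subseteq D$ (possible by the near-Euclidean picture you invoke). One can then either apply Theorem \ref{t:Self.Glue} with $\phi$ defined on a slightly larger $\psi$-invariant domain and restrict, or note directly that the length-preserving-lifting argument in the proof of Theorem \ref{t:Self.Glue} goes through for the quotient of $B_R(p)$ by the partial involution $x\sim\psi(x)$ (for $x\in B_R(p)\cap\partial X$ with $\psi(x)\in B_R(p)$), since the liftings only reflect across the boundary locally. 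Either route produces an Alexandrov $\kappa$-domain about $f(p)=y$. Everything else in your argument --- the localization of the gluing inside $B_r^{\partial X}(p)$ via Lemma \ref{l:inv.glue} and the final identification of the quotient with a small ball about $y$ in $Y$ --- is correct and is exactly what the paper leaves implicit.
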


\section{Globalization over Discrete Singular Points}\label{s:glob}

This section is dedicated to the proof of the following theorem, which implies Theorem \ref{t:glob.des}, due to Remark \ref{r:alex.convex}.

\begin{theorem}\label{t:glob.des.convex}
Let $Y$ be a length metric space with $\cU\subseteq Y$ and $\cA=Y\setminus \cU$. Then $Y\in\Alex^n(\kappa)$ if the following are satisfied.
\begin{enumerate}
\renewcommand{\labelenumi}{(\arabic{enumi})}
\setlength{\itemsep}{1pt}
\item $\cU\in\Alex_{loc}^n(\kappa)$.
\item $\cA$ is discrete and every point $p\in \cA$ is convex.
\end{enumerate}
\end{theorem}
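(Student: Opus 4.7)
The strategy is to reduce the theorem to a purely local statement via globalization and then establish the local Alexandrov property at each exceptional point using convexity. By the Perelman--Petrunin globalization principle (see \cite{BGP}, \cite{Pet2016}), $Y \in \Alex^n(\kappa)$ follows once every point of $Y$ admits an open neighborhood which is an Alexandrov $\kappa$-domain. For points $p \in \cU$ this is hypothesis (1), so the whole task reduces to verifying the same property at each $p \in \cA$.

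Fix $p \in \cA$. Using discreteness of $\cA$, choose $R > 0$ so small that $B_{2R}(p) \cap \cA = \{p\}$; then the punctured ball $B_{2R}(p) \setminus \{p\}$ lies in $\cU \in \Alex_{loc}^n(\kappa)$. I will prove that $B_R(p)$ is an Alexandrov $\kappa$-domain by verifying $\kappa$-Toponogov for every geodesic triangle $\triangle xyz \subseteq B_R(p)$, splitting into three cases. \emph{Case 1:} the triangle avoids $p$ entirely. Then it lies in $\cU$, and one subdivides it into sub-triangles small enough to each fit into an Alexandrov $\kappa$-domain neighborhood in $\cU$, then stitches via the short globalization argument of BGP to obtain comparison for the full triangle. \emph{Case 2:} $p$ is a vertex, say of $\triangle pxy$. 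I would approximate $p$ by a sequence $p_n \to p$ with $p_n \in \cU$ on a geodesic emanating from $p$, obtain $\kappa$-Toponogov for $\triangle p_n xy \subseteq \cU$ from Case 1, and pass to the limit. The comparison angle $\cang{\kappa}{p_n}{x}{y}$ is continuous in $p_n$, the angles at $x$ and $y$ transfer via the continuity of geodesic endpoints, and the angle at $p$ requires the convexity hypothesis to establish the Toponogov inequality. \emph{Case 3:} $p$ lies on the interior of a side, say $p \in \geodii{x,y}$. Split $\triangle xyz$ into $\triangle xpz$ and $\triangle pyz$, apply Case 2 to each, then merge using convexity of $p$: the inequality $\ang{p}{\geod{pz}}{\geod{px}} + \ang{p}{\geod{pz}}{\geod{py}} \le \pi$ combined with the two Toponogov lower bounds at $p$ from the sub-triangles forces equality, which is precisely what is needed to consolidate the two sub-comparisons into one comparison for $\triangle xyz$.

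The main obstacle is the angle comparison at $p$ in Case 2. The convexity hypothesis directly yields only the upper bound $\le \pi$ on the sum of angles at $p$ along a geodesic through $p$; extracting the Toponogov lower bound $\ang{p}{\geod{px}}{\geod{py}} \ge \cang{\kappa}{p}{x}{y}$ from the comparisons on the approximating $\triangle p_n xy$ is delicate, because the comparison vertex itself is moving to the limit. I expect the resolution to use extensions of the geodesics $\geod{px}$, $\geod{py}$ past $p$ (where convexity controls the angular data on the extension) together with the upper semi-continuity of comparison angles as $p_n$ degenerates onto $p$. This extrapolation-from-nearby-regular-points, controlled by the convexity inequality, is the technical heart of Theorem \ref{t:glob.des.convex} and explains why convexity of $p$ plus local Alexandrov of $\cU$ together suffice, even though neither condition alone gives the Toponogov bound at $p$.
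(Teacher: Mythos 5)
Your reduction to a local statement via globalization, and your three-way case analysis on the position of $p$ relative to the triangle, match the paper's overall strategy. But there are two genuine gaps in the execution.

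First, your Case 2 (vertex at $p$) hinges on passing $\kappa$-Toponogov from $\triangle p_n x y$ to $\triangle p x y$ as $p_n\to p$ in $\cU$. You flag this as delicate, and it is in fact fatal as stated: the angle $\ang{p_n}{x}{y}$ is not continuous in the vertex. As $p_n\to p$ along a geodesic, the geodesics $\geod{p_n x}$ and $\geod{p_n y}$ may limit onto geodesics at $p$ whose angle drops strictly below $\lim\ang{p_n}{x}{y}$ (the model picture is the apex of a thin cone), so $\liminf\ang{p_n}{x}{y}\ge\cang{\kappa}{p}{x}{y}$ does not deliver $\ang{p}{x}{y}\ge\cang{\kappa}{p}{x}{y}$, and convexity of $p$ alone does not repair this. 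The paper avoids the limit entirely. Its key device is Lemma~\ref{l:thin.glob}, a thin-triangle monotonicity statement: the comparison angle of a hinge at $a$ is nonincreasing as the two endpoints move outward along the sides, provided every cross-geodesic $\geod{uv}$ with $u\in\geodii{ab}$, $v\in\geodii{ac}$ consists of points admitting $\kappa$-domains. For $\omega=a$ the paper verifies this hypothesis by non-bifurcation: if some $\geod{uv}$ passed through $a$, then $\geod{ua}\cup\geod{av}$ would be a geodesic contained in $\geod{ba}\cup\geod{ac}$, forcing $a$ onto a geodesic from $b$ to $c$, contradicting general position of $a,b,c$. Lemma~\ref{l:thin.glob} then applies directly, with no approximation and no appeal to angle continuity.

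Second, your case decomposition is incomplete. "Triangle avoids $p$" does not make your Case 1 trivial: the subdivision-and-stitch argument introduces cross-geodesics between points on the sides, and one of these may pass through $\omega=p$. This is exactly the configuration the paper's Step 3 handles — $\omega$ on no side of $\triangle abc$ but $\omega\in\geodii{cq}$ for some $q\in\geodii{ab}$. There one uses that $q\in\cU$ admits a local $\kappa$-domain, applies the already-established "$\omega$ on a side" case to the two sub-hinges at $q$, and consolidates via Alexandrov's Lemma. Incorporating an analogue of Lemma~\ref{l:thin.glob} (and its proof via iterated application of Alexandrov's Lemma along a fine partition of a side) is what lets the paper replace your limit arguments by monotonicity and close the Case 1 gap.
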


By the following Globalization Theorem, we only need to show that $\cA\in\Alex^n_{loc}(\kappa)$. Note that Theorem \ref{t:glob.des.convex} is also a generalization of Theorem \ref{t:glob.BGP}.

\begin{theorem}[Globalization \cite{BGP}]\label{t:glob.BGP}
  If $Y\in\Alex_{loc}^n(\kappa)$ and $Y$ is complete, then $Y\in\Alex^n(\kappa)$.
\end{theorem}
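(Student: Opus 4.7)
The plan is to reduce the theorem to the BGP Globalization Theorem \ref{t:glob.BGP}: once I establish $Y\in\Alex_{loc}^n(\kappa)$, the global conclusion follows. For $y\in\cU$, condition (1) directly supplies an Alexandrov $\kappa$-domain containing $y$, so only the discrete set $\cA$ needs attention. For each $p\in\cA$, discreteness gives $r>0$ with $B_{2r}(p)\cap\cA=\{p\}$, hence $B_{2r}(p)\setminus\{p\}\subseteq\cU$. The remaining task is to show that, after possibly shrinking $r$, the ball $B_r(p)$ is an Alexandrov $\kappa$-domain, i.e.\ the hinge/angle comparison holds for every geodesic triangle $\triangle xyz\subseteq B_r(p)$. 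I split into three cases.

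Case I: the three sides of $\triangle xyz$ stay at positive distance from $p$. Then the triangle together with a small tubular neighborhood lies in $\cU$. I would run the BGP-style local-to-global argument (as in the proof of Theorem \ref{t:glob.BGP}) restricted to this neighborhood: cover the compact triangle by finitely many Alexandrov $\kappa$-domains supplied by (1), then apply a ``minimal bad configuration'' (or equivalent bisection) argument within the finite cover to deduce Toponogov for $\triangle xyz$.

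Case II: $p$ lies in the interior of a side, say $p\in\geodii{y,z}$. I split $\triangle xyz$ along $\geod{xp}$ into $\triangle xyp$ and $\triangle xzp$. The convexity of $p$, applied to the geodesic $\geodii{y,z}$ through $p$ and the direction $\geod{px}$, yields $\ang{p}{\geod{px}}{\geod{py}}+\ang{p}{\geod{px}}{\geod{pz}}\le\pi$. Combined with Toponogov for $\triangle xyp$ and $\triangle xzp$ (each reducing to Case I or Case III), a hinge composition in the model plane $\dS^2_\kappa$ gives the comparison for $\triangle xyz$. Case III: $p$ is a vertex, say $p=x$. I push $p$ slightly along $\geod{py}$ to a point $p_k\in\cU$ with $p_k\to p$, so that $\triangle p_kyz$ falls under Case I or Case II, yielding Toponogov at $p_k$. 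Letting $p_k\to p$, the comparison angle $\cang{\kappa}{p_k}{y}{z}$ tends to $\cang{\kappa}{p}{y}{z}$ by continuity, and the actual angle $\ang{p_k}{\geod{p_k,y}}{\geod{p_k,z}}$ tends to $\ang{p}{\geod{py}}{\geod{pz}}$ because the initial directions from $p_k$ converge in the tangent cone (using the convexity of $p$ together with the local Alexandrov structure of $\cU$ on the punctured ball).

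The main obstacle is Case I. The usual BGP proof of Theorem \ref{t:glob.BGP} relies on global completeness, but here I must execute the local-to-global argument inside the open set $\cU$ rather than the complete space $Y$. I plan to handle this by noting that a triangle at positive distance from $p$ has compact image and is covered by finitely many Alexandrov $\kappa$-domains from (1); the intermediate sub-configurations in the BGP argument remain within this compact region and hence inside $\cU$, so global completeness is unnecessary. A secondary delicate point is the angle convergence in Case III, which requires the convexity condition together with the local Alexandrov structure near $p$ to control the limit of directions at $p_k$. Once both are resolved, $B_r(p)$ is an Alexandrov $\kappa$-domain, so $Y\in\Alex_{loc}^n(\kappa)$, and Theorem \ref{t:glob.BGP} yields $Y\in\Alex^n(\kappa)$.
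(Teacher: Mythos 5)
There is a genuine gap, and it is structural: your proposal does not prove the statement in question. Theorem \ref{t:glob.BGP} is the classical Burago--Gromov--Perelman Globalization Theorem --- if \emph{every} point of a complete space $Y$ lies in an Alexandrov $\kappa$-domain, then $Y\in\Alex^n(\kappa)$ --- and in the paper it is simply quoted from \cite{BGP}. Your hypotheses, however, speak of a decomposition $Y=\cU\cup\cA$, a ``condition (1)'', a discrete exceptional set $\cA$ and convex points; none of these appear in Theorem \ref{t:glob.BGP}. What you have sketched is a proof of the paper's Theorem \ref{t:glob.des.convex} (equivalently Theorem \ref{t:glob.des}). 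As a proof of the stated theorem your argument is moreover circular: both your opening sentence and your closing sentence invoke Theorem \ref{t:glob.BGP} itself to pass from $Y\in\Alex_{loc}^n(\kappa)$ to $Y\in\Alex^n(\kappa)$, which is exactly the content to be established. The same circularity resurfaces inside Case I, where the plan is to ``run the BGP-style local-to-global argument'': that local-to-global propagation \emph{is} the theorem, and the suggestion that completeness can be dispensed with because a single triangle is compact does not reproduce it --- the BGP/Perelman proof must propagate the comparison from small scales to arbitrarily large scales across the whole space, and completeness (or an equivalent hypothesis, cf.\ \cite{Pet2016}, \cite{NLi15a}) is what makes the limiting configurations in that induction exist.

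Even if one reads your text as a blind proof of Theorem \ref{t:glob.des.convex}, it is looser than the paper's argument at the points you flag as delicate. The paper does not split the triangle and compose hinges in $\dS^2_\kappa$ as in your Case II, nor does it need the angle convergence at approximating vertices $p_k\to p$ of your Case III; instead it proves the monotonicity statement of Lemma \ref{l:thin.glob} along the two sides issuing from a vertex (using only the local $\kappa$-domains off the singular point), and then handles $\omega\in\{a,b,c\}$, $\omega$ interior to a side, and $\omega$ off the triangle by combining that lemma with the convexity of $\omega$ and Alexandrov's Lemma (Lemma \ref{Alex.lem}). Your Case III limit of directions at $p_k$ is not justified by anything you state --- in a space not yet known to be Alexandrov near $p$, angles between geodesics need not behave semicontinuously in the way you assert --- and the paper's route is designed precisely to avoid such a limit. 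In short: the statement you were assigned remains unproved, and the argument you do give should be re-aimed at Theorem \ref{t:glob.des.convex} and tightened along the lines of Lemma \ref{l:thin.glob}.
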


Let us begin with an important tool in the business of globalization.

\begin{lemma}[Alexandrov's Lemma \cite{AKP}]\label{Alex.lem}
  Let $p,q,s$ and $x\in\geodii{qs}$ be points in $Y$. Then $\cang{\kappa}{q}{p}{x}\ge\cang{\kappa}{q}{p}{s}$ if and only if $\cang{\kappa}{x}{p}{q}+\cang{\kappa}{x}{p}{s}\le\pi$.
\end{lemma}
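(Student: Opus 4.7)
The plan is to do the entire argument inside the model $2$-dimensional space $\dS^2_\kappa$ of curvature $\kappa$, since by definition every quantity $\cang{\kappa}{\cdot}{\cdot}{\cdot}$ is realized as an angle of a triangle drawn in $\dS^2_\kappa$. First I would construct a model triangle $\tilde q\tilde p\tilde x$ with $|\tilde q\tilde p|=|qp|$, $|\tilde q\tilde x|=|qx|$, $|\tilde p\tilde x|=|px|$. Then, since $x\in\geodii{qs}$ gives $|qs|=|qx|+|xs|$, I would extend the segment $\tilde q\tilde x$ past $\tilde x$ along the same model geodesic by a distance $|xs|$, arriving at a point $\tilde s$ with $|\tilde q\tilde s|=|qs|$. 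This configuration records every distance that appears in the statement except $|ps|$, whose model analogue $|\tilde p\tilde s|$ is now a quantity determined by the construction and generally different from $|ps|$.

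The main tool is hinge-monotonicity in $\dS^2_\kappa$: fixing two sides of a triangle, the angle opposite the third side is a strictly increasing continuous function of that third side. Applying it at $\tilde q$, the legs $|qp|$ and $|qs|$ are shared by the constructed angle $\angle \tilde p\tilde q\tilde s=\angle \tilde p\tilde q\tilde x=\cang{\kappa}{q}{p}{x}$ (opposite side $|\tilde p\tilde s|$) and by the statement's $\cang{\kappa}{q}{p}{s}$ (opposite side $|ps|$), so
\[
\cang{\kappa}{q}{p}{x}\ge \cang{\kappa}{q}{p}{s}\ \Longleftrightarrow\ |\tilde p\tilde s|\ge |ps|.
\]
Next, the collinearity of $\tilde q,\tilde x,\tilde s$ in $\dS^2_\kappa$ yields $\angle \tilde p\tilde x\tilde q+\angle \tilde p\tilde x\tilde s=\pi$, where $\angle \tilde p\tilde x\tilde q=\cang{\kappa}{x}{p}{q}$ by construction, while $\cang{\kappa}{x}{p}{s}$ shares the legs $|xp|,|xs|$ with $\angle \tilde p\tilde x\tilde s$ but uses $|ps|$ rather than $|\tilde p\tilde s|$ as the opposite side. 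Hinge-monotonicity at $\tilde x$ therefore gives
\[
\cang{\kappa}{x}{p}{q}+\cang{\kappa}{x}{p}{s}\le\pi\ \Longleftrightarrow\ \cang{\kappa}{x}{p}{s}\le \angle \tilde p\tilde x\tilde s\ \Longleftrightarrow\ |ps|\le |\tilde p\tilde s|.
\]
Chaining the two equivalences yields the lemma.

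The point requiring care -- and what I would flag as the main obstacle -- is admissibility of the auxiliary triangles when $\kappa>0$, where the existence of a comparison triangle requires the perimeter to be less than $2\pi/\sqrt{\kappa}$ in addition to the triangle inequality. The triangle $\tilde q\tilde p\tilde x$ exists by the standing hypothesis that $\cang{\kappa}{q}{p}{x}$ is defined, and the perimeters of $\tilde q\tilde p\tilde s$ and $\tilde x\tilde p\tilde s$ are bounded in terms of that of $\tilde q\tilde p\tilde x$ using the relations $|\tilde q\tilde s|=|qx|+|xs|$ and $|\tilde p\tilde s|\le |\tilde p\tilde x|+|xs|$; so admissibility propagates from the original configuration and the monotonicity step above is legitimate on the same nose. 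A brief concluding sentence would note that the argument used only distances, so the same statement holds in any metric space in which the comparison angles $\cang{\kappa}{q}{p}{x},\cang{\kappa}{q}{p}{s},\cang{\kappa}{x}{p}{q},\cang{\kappa}{x}{p}{s}$ are defined.
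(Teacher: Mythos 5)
Your proof is correct, and it is exactly the standard model-space argument for Alexandrov's Lemma: build the comparison triangle for $qpx$, extend $\tilde q\tilde x$ by $|xs|$, and translate both inequalities into the single condition $|\tilde p\tilde s|\ge|ps|$ via hinge monotonicity at $\tilde q$ and at $\tilde x$. The paper does not supply its own proof of this lemma --- it simply cites the reference [AKP], where this same argument appears --- so there is nothing to contrast. Your caveat on admissibility for $\kappa>0$ is the right thing to flag; in the intended use one tacitly assumes all four comparison angles are defined, which in particular keeps $|ps|$ in the range where hinge monotonicity at $\tilde q$ applies, and $\tilde p,\tilde s\in\dS^2_\kappa$ makes $|\tilde p\tilde s|$ automatically admissible.
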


The following lemma was implicitly proved in \cite{Pet2016} and \cite{NLi15a}. We will state it here and give a proof just for the completeness. Let $\geod{ab}$ denote a geodesic connecting points $a$ and $b$. Let $\geodii{ab}$ denote the interior of $\geod{ab}$. Finally, let $\geodci{ab}=\{a\}\,\cup\,\geodii{ab}$ and $\geodic{ab}=\geodii{ab}\,\cup\,\{b\}$.

\begin{lemma}\label{l:thin.glob}
Let $a,b,c\in Y$, $\kappa\in \mathbb R$. Suppose that there are geodesics $\geod{ab}$ and $\geod{ac}$ so that for every $u\in\geodii{ab}$ and $v\in \geodii{ac}$, there is a geodesic $\geod{uv}$ for which every point on $\geod{uv}$ admits a $\kappa$-domain. Then for any $b_2\in\geodic{ab}$, $b_1\in\geodic{ab_2}$, $c_2\in\geodic{ac}$ and $c_1\in\geodic{ac_2}$, we have
  \begin{align}
    \cang{\kappa}{a}{c_2}{b_2}\le\cang{\kappa}{a}{c_1}{b_1}\le \ang{a}{c}{b}.
  \label{l:thin.glob.e0}
  \end{align}
\end{lemma}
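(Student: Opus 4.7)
The plan is to prove the two inequalities separately, first establishing the monotonicity inequality on the left by a one-variable-at-a-time reduction, and then deducing the upper bound on the right by a passage to the limit. By the triangle-inequality-type chaining
\begin{equation*}
  \cang{\kappa}{a}{c_2}{b_2}\le \cang{\kappa}{a}{c_1}{b_2}\le \cang{\kappa}{a}{c_1}{b_1},
\end{equation*}
it suffices to show monotonicity when only one of the two endpoints moves. By the symmetry between $[ab]$ and $[ac]$, I would just prove: for $b'\in\geodii{ab''}$ with $b''\in\geodii{ab}$ and any fixed $c'\in\geodii{ac}$,
\begin{equation*}
  \cang{\kappa}{a}{c'}{b'}\ge\cang{\kappa}{a}{c'}{b''}.
\end{equation*}
By Alexandrov's Lemma (Lemma \ref{Alex.lem}) applied at the intermediate point $b'\in\geodii{ab''}$, this monotonicity is equivalent to the hinge inequality
\begin{equation*}
  \cang{\kappa}{b'}{a}{c'}+\cang{\kappa}{b'}{b''}{c'}\le\pi,
\end{equation*}
so the whole argument reduces to verifying this hinge inequality at interior points of $[ab]$.

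To verify the hinge inequality, I would use that $b'\in\geodii{ab}$ itself admits a $\kappa$-domain: the hypothesis applied to the pair $(b',v)$ for any $v\in\geodii{ac}$ yields a geodesic $\geod{b'v}$ all of whose points, in particular $b'$, admit a $\kappa$-domain. Inside that $\kappa$-domain the desired hinge inequality holds locally, since $a$ and $b''$ are antipodal along the geodesic $\geod{ab}$ through $b'$ so $\measuredangle(b';a,b'')=\pi$, and Toponogov comparison in the $\kappa$-domain gives $\cang{\kappa}{b'}{\tilde a}{c'}+\cang{\kappa}{b'}{\tilde b''}{c'}\le\measuredangle(b';\tilde a,c')+\measuredangle(b';\tilde b'',c')\le\pi$ for nearby $\tilde a,\tilde b''$. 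To extend the bound to the actual endpoints $a$ and $c'$, which may lie outside the $\kappa$-domain at $b'$, I would fix fine partitions $a=u_0,u_1,\dots,u_k=b'$ of $\geod{ab'}$ and $b'=w_0,w_1,\dots,w_\ell=c'$ of $\geod{b'c'}$; using the hypothesis, one can ensure that each consecutive subtriangle lies in a single $\kappa$-domain and then iterate the standard $\kappa$-domain comparison (Alexandrov's Lemma combined with local Toponogov) to propagate the bound outward from a neighborhood of $b'$ to the endpoints.

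Finally, the upper bound $\cang{\kappa}{a}{c_1}{b_1}\le\ang{a}{c}{b}$ follows from Definition \ref{def:angle}: taking $b_0\in\geodii{ab_1}$ and $c_0\in\geodii{ac_1}$ with $b_0,c_0\to a$, the first inequality of the lemma gives $\cang{\kappa}{a}{c_1}{b_1}\le\cang{\kappa}{a}{c_0}{b_0}$, and letting $b_0,c_0\to a$ converts the right-hand side into the $\limsup$ that defines $\ang{a}{c}{b}$. The main obstacle I anticipate is the subdivision step approaching $a$: because $a$ itself is not assumed to admit a $\kappa$-domain, the partition $\{u_i\}$ of $\geod{ab'}$ must converge to $a$ without containing it, and a careful chained application of Alexandrov's Lemma is needed so that no accumulated error spoils the eventual bound. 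This is precisely where the hypothesis that every point on every interior geodesic $\geod{uv}$ admits a $\kappa$-domain becomes essential, as it allows a finite $\kappa$-domain cover of each compact subgeodesic inside the kite region bounded by $\geod{ab}$ and $\geod{ac}$.
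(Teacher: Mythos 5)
Your strategy is essentially the one the paper uses: reduce to one-variable-at-a-time monotonicity, translate it via Alexandrov's Lemma into a hinge inequality at an interior point of $\geod{ab}$, and propagate using the $\kappa$-domains guaranteed by the hypothesis; the closing step, deducing $\cang{\kappa}{a}{c_1}{b_1}\le\ang{a}{c}{b}$ by letting the auxiliary points slide towards $a$ and invoking Definition~\ref{def:angle}, is also exactly the paper's argument. The reduction to the hinge inequality at $b'$ is legitimate because Alexandrov's Lemma is an equivalence.

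The gap is in the chaining step. Your claim that a sufficiently fine pair of partitions of $\geod{ab'}$ and $\geod{b'c'}$ makes each ``consecutive subtriangle'' lie in a single $\kappa$-domain is false: the subtriangle with the short side $\geod{u_i u_{i+1}}$ and the third vertex at or near $c'$ (or with short side $\geod{w_j w_{j+1}}$ and third vertex near $a$) is long and thin and does not shrink into any fixed $\kappa$-domain as the partitions refine, since its diameter stays comparable to $d(b',c')$ or $d(a,b')$. The ingredient one actually needs is the thin-triangle local comparison $\cang{\kappa}{x}{c_1}{y}\le\ang{x}{c_1}{y}$ for $y$ close to $x\in\geodii{ab}$ but $c_1$ remote, which the paper imports from \cite{Pet2016} and \cite{NLi15a} (itself a nontrivial chain along $\geod{xc_1}$), and then an explicit double induction on $j-i$ along a compact subsegment $\geod{pq}\subset\geodii{ab}$. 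That induction must carry the monotonicity and the upper bound $\cang{\kappa}{x_i}{c_1}{x_j}\le\ang{x_i}{c_1}{x_j}$ simultaneously, because the upper bound at range $k$ is what shows the hinge sum at $x_j$ is at most $\pi$, which Alexandrov's Lemma then converts into monotonicity at range $k+1$. Your proposal supplies neither the thin-triangle input nor the coupled induction, and ``iterate the standard $\kappa$-domain comparison'' skips over precisely this, so the hinge inequality $\cang{\kappa}{b'}{a}{c'}+\cang{\kappa}{b'}{b''}{c'}\le\pi$ remains unproved as written.
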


\begin{proof}
  Fix the given $c_1\in\geod{ac_2}$. By the argument in \cite{Pet2016} or \cite{NLi15a}, for any $x\in\geodii{ab}$, we have
  \begin{align}
   \cang{\kappa}{x}{c_1}{y}\le \ang{x}{c_1}{y}
   \label{pf.thin.glob.e1}
 \end{align}
for every $y\in\geodii{ab}$ being close to $x$.
Fix $p\in\geodii{ab_2}$ and $q\in\geodii{pb_2}$ so that $b_1\in\geodii{p, q}$. Let
$$\{p=x_0, x_1, x_2,\dots, x_j=b_1,\dots,x_N=q\}$$
be a partition of $\geod{pq}$ for which (\ref{pf.thin.glob.e1}) holds for every $\{x,y\}=\{x_i, x_{i+1}\}$. For every $0\le i<j\le N$, we will prove
\begin{align}
   \cang{\kappa}{x_i}{c_1}{x_j}\le\dots\le\cang{\kappa}{x_i}{c_1}{x_{i+2}} \le\cang{\kappa}{x_i}{c_1}{x_{i+1}}\le\ang{x_i}{c_1}{x_j}
   \label{pf.thin.glob.e2}
\end{align}
and
\begin{align}
   \cang{\kappa}{x_j}{c_1}{x_i}\le\dots\le\cang{\kappa}{x_j}{c_1}{x_{j-2}} \le\cang{\kappa}{x_j}{c_1}{x_{j-1}}\le\ang{x_j}{c_1}{x_i}
   \label{pf.thin.glob.e3}
\end{align}
by induction. 

First, (\ref{pf.thin.glob.e2}) and (\ref{pf.thin.glob.e3}) hold for $j-i=1$ by the definition of the partition. Now assume that (\ref{pf.thin.glob.e2}) and (\ref{pf.thin.glob.e3}) hold for every $0\le i<j\le N$ with $j-i=1,2,\dots,k$. We now prove them for every $0\le i<j\le N$ with $j-i=k+1$.

By the inductive hypothesis, we have
\begin{align}
   \cang{\kappa}{x_j}{c_1}{x_{j+1}}\le\ang{x_j}{c_1}{x_{j+1}}
   \text{ and }
   \cang{\kappa}{x_j}{c_1}{x_i}\le\ang{x_j}{c_1}{x_i},
   \label{pf.thin.glob.e5}
\end{align}
where the first inequality follows from (\ref{pf.thin.glob.e2}) and the second inequality follows from (\ref{pf.thin.glob.e3}). Because $\{x_j\}\in\Alex_{loc}^n(\kappa)$, we get
\begin{align}
   \cang{\kappa}{x_j}{c_1}{x_{j+1}}+\cang{\kappa}{x_j}{c_1}{x_i}
   \le\ang{x_j}{c_1}{x_{j+1}}+\ang{x_j}{c_1}{x_i}=\pi.
   \label{pf.thin.glob.e6}
\end{align}
Apply Alexandrov's Lemma. We obtain
\begin{align}
   \cang{\kappa}{x_i}{c_1}{x_{j+1}}\le\cang{\kappa}{x_i}{c_1}{x_j}.
   \label{pf.thin.glob.e7}
\end{align}
Combine this with the inductive hypothesis, we prove (\ref{pf.thin.glob.e2}) for $j-i=k+1$. The proof of (\ref{pf.thin.glob.e3}) is similar.

It follows from (\ref{pf.thin.glob.e2}) that
 \begin{align}
   \cang{\kappa}{p}{c_1}{q}\le\cang{\kappa}{p}{c_1}{b_1}
   \label{pf.thin.glob.e8}.
 \end{align}
Let $p\to a$ and $q\to b_2$. We get
\begin{align}
   \cang{\kappa}{a}{c_1}{b_2}\le\cang{\kappa}{a}{c_1}{b_1}
   \label{pf.thin.glob.e10}.
\end{align}
Similarly, we can also prove
\begin{align}
   \cang{\kappa}{a}{c_2}{b_2}\le\cang{\kappa}{a}{c_1}{b_2}
   \label{pf.thin.glob.e11}.
\end{align}
Combine (\ref{pf.thin.glob.e10}) and (\ref{pf.thin.glob.e11}), we get the desired result.
\end{proof}

\begin{proof}[Prove of Theorem \ref{t:glob.des.convex}]
 By Theorem \ref{t:glob.BGP}, we only need to prove $\cA\in\Alex_{loc}^n(\kappa)$. Let $\omega\in\cA$ and $B_r(\omega)$ be an open ball so that $B_r(\omega)\cap \cA=\{\omega\}$. Take $U=B_{r/100}(\omega)$. We will show that $U$ is a $\kappa$-domain.

Note that for any geodesics $\geod{ab}$ and $\geod{ac}$, the angle $\measuredangle\geod{ab}\geod{ac}$ is well-defined as in Definition \ref{def:angle}. If no confusion arises, we will denote $\measuredangle\geod{ab}\geod{ac}$ by $\ang{a}{b}{c}$.

The key is to prove
 \begin{align}
   \cang{\kappa}{a}{b}{c}\le \ang{a}{b}{c}\label{pf.t:glob.des.e1}
 \end{align}
and furthermore
 \begin{align}
   \cang{\kappa}{a}{b}{c}\le \cang{\kappa}{a}{c}{q}\label{pf.t:glob.des.e1_2}
 \end{align}
 for every $q\in\geodii{ab}$.

First, we observe that geodesic $\geod{xy}\subseteq B_r(p)$ for any $x,y\in U$. Therefore $\geod{xy}\cap \cA\subseteq\{\omega\}$. Not losing generality, we assume that $a,b,c$ do not lie on the same geodesic. We prove (\ref{pf.t:glob.des.e1}) by the following three steps.

(Step 1.) We prove (\ref{pf.t:glob.des.e1}) and (\ref{pf.t:glob.des.e1_2}) for $\omega\in\{a,b,c\}$. We claim that for every $u\in\geodii{ab}$ and $v\in \geodii{ac}$, there is no geodesic $\geod{uv}\ni \omega$. If this is true, then the assumption of Lemma \ref{l:thin.glob} is satisfied and (\ref{pf.t:glob.des.e1}) follows.

Now we prove the claim. Suppose it is not true for some  geodesic $\geod{uv}$. If $\omega=a$, because geodesics do not bifurcate at $u$ or $v$, we have that $\geod{ua}\cup\geod{av}=\geod{uv}\subseteq \geod{ba}\cup\geod{ac}$. Therefore, $a$ lies on the geodesic connecting $c$ and $b$. This contradicts to the assumption. The proof for the cases $\omega=b$ or $\omega=c$ is similar.

(Step 2.) We prove (\ref{pf.t:glob.des.e1}) for $\omega\in\geodii{ab}$. By the result of Step 1, we have that
 \begin{align}
   \cang{\kappa}{\omega}{c}{b}\le \ang{\omega}{c}{b}
   \text{ and }
   \cang{\kappa}{\omega}{c}{a}\le \ang{\omega}{c}{a}.
   \label{pf.t:glob.des.e10}
 \end{align}
Because $\omega$ is convex, we have
\begin{align}
  \cang{\kappa}{\omega}{c}{b}+\cang{\kappa}{\omega}{c}{a}\le \ang{\omega}{c}{b}+\ang{\omega}{c}{a}\le\pi.
  \label{pf.t:glob.des.e11}
\end{align}
By Alexandrov's Lemma,
we get
\begin{align}
  \cang{\kappa}{a}{c}{b}\le \cang{\kappa}{a}{c}{\omega}\le \ang{a}{c}{\omega}=\ang{a}{c}{b},
  \label{pf.t:glob.des.e12}
\end{align}
where the second inequality follows from the result of Step 1. Estimate (\ref{pf.t:glob.des.e1_2}) follows from the above inequality and Lemma \ref{l:thin.glob}.

(Step 3.) We prove (\ref{pf.t:glob.des.e1_2}) for the case that $\omega$ is not on any side of geodesic triangle $\triangle abc$. If By Lemma \ref{l:thin.glob}, it suffices to prove (\ref{pf.t:glob.des.e1_2}) for the case that $\omega\in\geodii{cq}$ for some $q\in\geodii{ab}$. By the result of Step 2, we get
\begin{align}
  \cang{\kappa}{q}{c}{a}\le \ang{q}{c}{a}
  \text{ and }
  \cang{\kappa}{q}{c}{b}\le \ang{q}{c}{b}.
  \label{pf.t:glob.des.e13}
\end{align}
Note that $\{q\}\in\Alex_{loc}^n(\kappa)$. Thus
\begin{align}
  \cang{\kappa}{q}{c}{a}+\cang{\kappa}{q}{c}{b}
  \le \ang{q}{c}{a}+\ang{q}{c}{b}=\pi.
  \label{pf.t:glob.des.e15}
\end{align}
By Alexandrov's Lemma, we conclude that
\begin{align}
  \cang{\kappa}{a}{c}{b}\le\cang{\kappa}{a}{c}{q}.
  \label{pf.t:glob.des.e16}
\end{align}
\end{proof}

\begin{remark}\label{r:glob.des.convex}
  Our proof for Theorem \ref{t:glob.des.convex} will have technique difficulties if $\cA$ has higher dimension. For example, the argument in Step 1 wouldn't work and we don't have (\ref{pf.t:glob.des.e10}) to begin with.
\end{remark}

\section{Proof of the Main Results}\label{s:pf.main}

As a summary of Corollary
\ref{c:sep.glue.Alex} and Corollary
\ref{c:inv.glue.Alex}, we have

\begin{lemma}\label{l:decomp.X.U}
$\cU\in\Alex_{loc}^n(\kappa)$.
\end{lemma}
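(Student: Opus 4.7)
The plan is to verify $\cU\in\Alex_{loc}^n(\kappa)$ pointwise: for every $p\in\cU$, produce an open neighborhood of $p$ in $Y$ that is an Alexandrov $\kappa$-domain. The decomposition
\[
\cU=f(X^\circ)\cup f(F_X^\circ)\cup F_Y(\epsilon)\cup G_Y^2(\epsilon)
\]
naturally splits the verification into three cases, two of which have already been handled by the two corollaries cited in the statement.

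First I would treat the \emph{un-glued} case $p\in f(X^\circ)\cup f(F_X^\circ)$. Pick $x\in f^{-1}(p)\subseteq X^\circ\cup F_X^\circ$. Since $X^\circ$ is open in $X$ and $F_X^\circ$ is open in $\partial X$ (in the boundary topology), and since no point of $X^\circ$ is ever glued and no point of $F_X^\circ$ is glued with any other point, there exists $r>0$ so that every point of $B_r(x)\subseteq X$ has singleton fiber under $f$. Hence $f|_{B_r(x)}$ is injective and, by Theorem \ref{t:NLi15b-1-lip}, preserves the intrinsic metric; it is therefore an intrinsic isometry onto $f(B_r(x))$. Since $B_r(x)$ is an Alexandrov $\kappa$-domain in $X$, so is $f(B_r(x))$, which is an open neighborhood of $p$ in $Y$.

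Next I would handle the \emph{two-point gluing} case $p\in G_Y^2(\epsilon)$ and the \emph{involutional} case $p\in F_Y(\epsilon)$. These are exactly the conclusions of Corollary \ref{c:sep.glue.Alex} and Corollary \ref{c:inv.glue.Alex} respectively, each of which furnishes an Alexandrov $\kappa$-domain neighborhood of $p$ in $Y$. Since every $p\in\cU$ falls into one of the three cases above, each such point has an Alexandrov $\kappa$-domain neighborhood in $Y$, which is exactly the definition of $\cU\in\Alex_{loc}^n(\kappa)$.

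There is no real obstacle here: all the substantive work lies upstream, namely in Lemma \ref{l:loc.Pet} (classifying two-point gluing over $\cR^{n-1}_\epsilon(X)$ as locally separable, so that Petrunin's Theorem \ref{t:pet.glu} applies) and in Lemma \ref{l:inv.glue} (classifying the gluing near $F_X(\epsilon)$ as an isometric involution, so that Theorem \ref{t:Self.Glue} applies). The present lemma is essentially the bookkeeping step that aggregates those two results with the trivial un-glued case into a single statement about $\cU$, to be fed into the globalization input of Theorem \ref{t:glob.des.convex} in the next section.
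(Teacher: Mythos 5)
Your proposal is correct and takes essentially the same route as the paper: the paper's own "proof" is the one-line remark that this lemma summarizes Corollary \ref{c:sep.glue.Alex} and Corollary \ref{c:inv.glue.Alex}, together with the observation in Section \ref{s:outline} that $f(X^\circ)\cup f(F_X^\circ)\in\Alex_{loc}^n(\kappa)$ is obvious since $f$ is an intrinsic isometry off the glued set. You have merely expanded the bookkeeping for the un-glued case, which the paper leaves implicit.
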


Now we can prove the following lemma, using the way described in Section \ref{s:outline}.

\begin{lemma}\label{l:glue.disc.st}
Let $Y$ be a connected length metric space glued from compact $X\in\Alex_\amalg^n(\kappa)$ along their boundaries and by path isometry. Suppose that for every $y\in Y$ and every limit gluing map $f_\infty$, the tangent cones $T_{f^{-1}(y)}(X)$ glue to an Alexandrov space $Z\in\Alex^n(\kappa)$. If $\partial X\setminus(F_X^\circ\cup F_X(\epsilon)\cup G_X^2(\epsilon))$ is discrete for any $\epsilon>0$, then $Y\in\Alex^n(\kappa)$.
\end{lemma}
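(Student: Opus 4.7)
The plan is to directly implement the two-step framework from Section \ref{s:outline}, using the decomposition $\cU = f(X^\circ)\cup f(F_X^\circ)\cup F_Y(\epsilon)\cup G_Y^2(\epsilon)$ and $\cA = Y\setminus\cU$, for $\epsilon>0$ to be chosen small enough that Corollary \ref{c:sep.glue.Alex} and Corollary \ref{c:inv.glue.Alex} apply. Lemma \ref{l:decomp.X.U} already assembles these results into $\cU\in\Alex_{loc}^n(\kappa)$, which handles Step (1a). So the task reduces to verifying the hypotheses of Theorem \ref{t:glob.des.convex} for the remainder set $\cA$.

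Next I would verify that $\cA$ is discrete in $Y$. Because the gluing happens only along boundaries (so $f|_{X^\circ}$ is injective) and because $F_X^\circ$, $F_X(\epsilon)$, $G_X^2(\epsilon)$ are all $f^{-1}$-invariant by their definitions, one has the identification $f^{-1}(\cA)=\partial X\setminus(F_X^\circ\cup F_X(\epsilon)\cup G_X^2(\epsilon))$. By the hypothesis of the lemma this set is discrete in $\partial X$. Combined with the compactness of $X$, discreteness forces $f^{-1}(\cA)$ to be finite, and then the uniform fiber bound $|f^{-1}(y)|\le N_0$ from Lemma \ref{l:G2-1.dense} makes $\cA$ itself finite, hence discrete in $Y$.

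To verify convexity of each $p\in\cA$, I would invoke Remark \ref{r:alex.convex}: it suffices to show that for any rescaling sequence the tangent cone $T_p(Y)$ exists (along a subsequence) and is isometric to a metric cone $C(\Sigma)$ with $\Sigma\in\Alex^{n-1}(1)$. This is precisely the conclusion of Lemma \ref{l:glue.tcone}, whose hypothesis that $T_{f^{-1}(p)}(X)$ glues to an Alexandrov space is exactly the tangent cone assumption of the current lemma. Thus every point of $\cA$ is convex.

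Having established $\cU\in\Alex_{loc}^n(\kappa)$, the discreteness of $\cA$, and the convexity of every point of $\cA$, I would apply Theorem \ref{t:glob.des.convex} to conclude $Y\in\Alex^n(\kappa)$. Note that $Y$ is automatically complete because it is the continuous image of the compact space $X$ under the 1-Lipschitz map $f$. There is no serious obstacle here beyond careful bookkeeping of what lies in which stratum; the real content of the paper, namely the classification lemmas \ref{l:loc.Pet} and \ref{l:inv.glue}, the involutional gluing Theorem \ref{t:Self.Glue}, and the discrete-singular globalization Theorem \ref{t:glob.des.convex}, has already been done, and this final lemma is essentially their assembly.
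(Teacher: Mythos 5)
Your proof is essentially the same as the paper's: decompose $Y=\cU\cup\cA$ with $\cU=f(X^\circ)\cup f(F_X^\circ)\cup F_Y(\epsilon)\cup G_Y^2(\epsilon)$, invoke Lemma~\ref{l:decomp.X.U} for $\cU\in\Alex^n_{loc}(\kappa)$, invoke Lemma~\ref{l:glue.tcone} together with Remark~\ref{r:alex.convex} for convexity on $\cA$, and conclude by Theorem~\ref{t:glob.des.convex}. You are slightly more careful than the printed proof in two minor respects: you explicitly include $f(F_X^\circ)$ in $\cU$ (the paper's proof omits it, making its identification of $\cA$ not quite match the hypothesis), and you spell out why compactness of $X$ together with the fiber bound from Lemma~\ref{l:G2-1.dense} makes $\cA$ discrete, which the paper asserts without elaboration.
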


\begin{proof}
  Let $\cU=f(X^\circ)\cup F_Y(\epsilon)\cup G_Y^2(\epsilon)$ and $\cA=Y\setminus\cU=f(\partial X\setminus(F_X(\epsilon)\cup G_X^2(\epsilon)))$. By Lemma \ref{l:decomp.X.U}, we have that $\cU\in\Alex_{loc}^n(\kappa)$. By Lemma \ref{l:glue.tcone}, we have that every point in $\cA$ is convex. Note that $\cA$ is assumed to be discrete. Then the desired result follows from Theorem \ref{t:glob.des.convex}.
\end{proof}

\begin{proof}[Proof of Theorem \ref{t:glue.disc}]
  By Lemma \ref{l:glue.disc.st}, it suffices to show that if $H_1=\partial X\setminus(F_X^\circ\cup\cR^{n-1}_\epsilon(X))$ is discrete, then so does $H_2=\partial X\setminus(F_X^\circ\cup F_X(\epsilon)\cup G_X^2(\epsilon))$. Let $H=H_2\setminus H_1$. By the definition of $F_X(\epsilon)$ and $G_X^2(\epsilon)$ and the volume estimate (\ref{l:Rn-1.dense.e1}), we have that for every $x\notin F_X(\epsilon)\cup G_X^2(\epsilon)$, there is $x'\in f^{-1}(f(x))$ so that $x'\notin\cR^{n-1}_\epsilon(X)$. Thus $|H_2|\le m|H_1|$, where $\dsp m=\max_{y\in f(\partial X)}\{|f^{-1}(y)|\}\le C(X)<\infty$.
\end{proof}

\begin{proof}[Proof of Theorem \ref{t:glue.dim2}]
  Note that $G_X\setminus\cR^{1}_\epsilon(X)\subseteq\cS^0_\epsilon(X)$ is discrete for $n=2$. It remains to show that for every $y\in Y$ and every limit gluing map $f_\infty$, the tangent cone $T_{f^{-1}(y)}(X)$ glues to an Alexandrov space $Z\in\Alex^2(\kappa)$. Then the result follows from Theorem \ref{t:glue.disc}.

  We first show that the gluing is continuous in some sense. Let $\epsilon>0$ be determined latter. Given $p\in \partial X$, because $T_p(X)$ is a metric cone, there is $r>0$ small so that $B_r^{\partial X}(p)\setminus\{p\}\subseteq\cR^1_\epsilon(X)$, and moreover for every $0<s<r$, we have
\begin{equation}\label{eq:glue.dim2.1}
d_{GH}(B_{s}(p), B_{s}(p^{*})<s\epsilon
\end{equation}
where $p^*\in T_{p}(X)$ is the cone point. See \cite{LiNa19} for the existence of such an $r$. Note that $\partial\Sigma_p=\{\xi,\eta\}$ is a set of two points. Consider the gradient exponential map $g\exp_{p}(t\xi)$, $0<t\le r$. 
Let $A_\xi(r)=\{g\exp_p(t\xi)\colon t\in (0,r]\}$ be the image of $g\exp_p$ and $A_\eta(r)$ be defined similarly. Then $A_\xi(r)\cup A_\eta(r)=B_r^{\partial X}(p)\setminus\{p\}\subseteq \cR^1_\epsilon(X)$. Let $\cC_\xi=\{x\in A_\xi(r)\colon f^{-1}(f(x))=x\}$ be the set of point which are not glued with any other point in $A_\xi(r)$. We claim that either $\cC_\xi=\varnothing$ or $\cC_\xi= A_\xi(r)$. That is, the gluing is along the gradient curves.

It's clear that $\cC_\xi$ is open in $\partial X$, because $\partial X\setminus\cC_\xi$ is closed. If $\cC_\xi\neq\varnothing$, then it is a union of countable continuous curves with open ends. Let $L=\{g\exp_p(t\xi)\colon t\in (a,b), a>0\}$ be one of such curves. If $\cC_\xi\neq A_\xi(r)$, then neither $g\exp_p(a\xi)$ nor $g\exp_p(b\xi)$ belongs to $\cC_\xi$. Let $x\neq g\exp_p(a\xi)$ be glued with $g\exp_p(a\xi)$.

By the construction, $f(g\exp_p(a\xi))$ is a boundary point. However, we have \begin{align}
    \pi\ge \Theta_{g\exp_p(a\xi)}+\Theta_{x}\ge \pi-\delta(\epsilon)+\Theta_{x},
  \end{align}
  where $\delta(\epsilon)\to 0$ as $\epsilon\to 0$. This leads to a contradiction because  $\inf_{x\in X}\Theta_{x}\ge c(X)>0$. Thus we have either $\cC_\xi=\varnothing$ or $\cC_\xi= A_\xi(r)$.

By the same argument, we have either $\cC_\eta=\varnothing$ or $\cC_\eta= A_\eta(r)$. Note that there is no isolated gluing point in $X$. Therefore $\cC_\xi= A_\xi(r)$ and $\cC_\eta= A_\eta(r)$ can't both be true. Note that Lemma \ref{l:G2-1.dense}, Lemma \ref{l:loc.Pet} and Lemma \ref{l:inv.glue} hold with the assumption $\dsp\sum_{f(x)=y}\Theta_{x}\le2\pi$. Thus in any case, the gradient curves are glued with gradient curves by path isometry if $r$ is chosen small enough. Therefore, the tangent cone $T_{f^{-1}(f(p))}$ is glued to a metric cone $C(\Sigma)$, where the perimeter of $\Sigma$ is equal to $\sum_{f(x)=f(p)}\Theta_{x}$. By the assumptions on the glued cone angles in Theorem \ref{t:glue.dim2}, we have that $\Sigma\in\Alex^1(1)$. Thus $C(\Sigma)\in\Alex^2(0)$
\end{proof}


\begin{thebibliography}{AKP19}

\bibitem[AKP19]{AKP}
Stephanie Alexander, Vitali Kapovitch, and Anton Petrunin.
\newblock Alexandrov geometry: preliminary version no. 1, 2019.

\bibitem[AZ67]{AZ1967}
A.~D. Aleksandrov and V.~A. Zalgaller.
\newblock {\em Intrinsic geometry of surfaces}.
\newblock Translated from the Russian by J. M. Danskin. Translations of
  Mathematical Monographs, Vol. 15. American Mathematical Society, Providence,
  R.I., 1967.

\bibitem[BBI01]{BBI2001}
Dmitri Burago, Yuri Burago, and Sergei Ivanov.
\newblock {\em A course in metric geometry}, volume~33 of {\em Graduate Studies
  in Mathematics}.
\newblock American Mathematical Society, Providence, RI, 2001.

\bibitem[BGP92]{BGP}
Yu. Burago, M.~Gromov, and G.~Perelman.
\newblock A. {D}. {A}leksandrov spaces with curvatures bounded below.
\newblock {\em Uspekhi Mat. Nauk}, 47(2(284)):3--51, 222, 1992.

\bibitem[GW14]{GW2014}
Karsten Grove and Burkhard Wilking.
\newblock A knot characterization and 1-connected nonnegatively curved
  4-manifolds with circle symmetry.
\newblock {\em Geom. Topol.}, 18(5):3091--3110, 2014.

\bibitem[Li15a]{NLi15a}
Nan Li.
\newblock Globalization with probabilistic convexity.
\newblock {\em J. Topol. Anal.}, 7(4):719--735, 2015.

\bibitem[Li15b]{NLi15b}
Nan Li.
\newblock Lipschitz-volume rigidity in {A}lexandrov geometry.
\newblock {\em Adv. Math.}, 275:114--146, 2015.

\bibitem[LN19]{LiNa19}
Nan Li and Aaron Naber.
\newblock Quantitative estimates on the singular sets of alexandrov spaces.
\newblock arXiv:1912.03615, 2019.

\bibitem[LR12]{LR2012}
Nan Li and Xiaochun Rong.
\newblock Relatively maximum volume rigidity in {A}lexandrov geometry.
\newblock {\em Pacific J. Math.}, 259(2):387--420, 2012.

\bibitem[Per93]{Per1991}
Grisha Perelman.
\newblock {A. D.} {A}lexandrov's spaces with curvatures bounded from below.
  {II}.
\newblock Preprint, 1993.

\bibitem[Pet97]{Pet1997}
Anton Petrunin.
\newblock Applications of quasigeodesics and gradient curves.
\newblock In {\em Comparison geometry ({B}erkeley, {CA}, 1993--94)}, volume~30
  of {\em Math. Sci. Res. Inst. Publ.}, pages 203--219. Cambridge Univ. Press,
  Cambridge, 1997.

\bibitem[Pet16]{Pet2016}
Anton Petrunin.
\newblock A globalization for non-complete but geodesic spaces.
\newblock {\em Math. Ann.}, 366(1-2):387--393, 2016.

\end{thebibliography}
\end{document}